\definecolor{refkey}{gray}{0.5}
\definecolor{labelkey}{gray}{0.2}
\newtheorem{theorem}{Theorem}[section]
\newtheorem{proposition}[theorem]{Proposition}
\newtheorem{lemma}[theorem]{Lemma}
\newtheorem{corollary}[theorem]{Corollary}
\newtheorem{thm}{Theorem}
\newtheorem{proof}{\textmd{\textit{Proof.}}}
\newtheorem{remark}[theorem]{Remark}
\newtheorem{definition}[theorem]{Definition}
\newcommand{\qedd}{\hfill \Box}
\newcommand{\ve}{\varepsilon}
\newcommand{\del}{\partial}
\newcommand{\lra}{\longrightarrow}
\newcommand{\e}{\mathrm{e}}
\newcommand{\N}{\ensuremath{\mathbb{N}}}
\newcommand{\R}{\ensuremath{\mathbb{R}}}
\newcommand{\Sph}{\ensuremath{\mathbb{S}}}
\newcommand{\cC}{\ensuremath{\mathcal{C}}}
\newcommand{\cE}{\ensuremath{\mathcal{E}}}
\newcommand{\cI}{\ensuremath{\mathcal{I}}}
\newcommand{\cL}{\ensuremath{\mathcal{L}}}
\newcommand{\scN}{\ensuremath{\mathscr{N}}}
\newcommand{\fm}{\ensuremath{\mathfrak{m}}}
\newcommand{\bS}{\ensuremath{\mathbf{S}}}
\newcommand{\sC}{\ensuremath{\mathsf{C}}}
\newcommand{\sS}{\ensuremath{\mathsf{S}}}
\def\vol{\mathop{\mathrm{vol}}\nolimits}
\def\diam{\mathop{\mathrm{diam}}\nolimits}
\def\div{\mathop{\mathrm{div}}\nolimits}
\def\loc{\mathop{\mathrm{loc}}\nolimits}
\def\Var{\mathop{\mathrm{Var}}\nolimits}
\def\Ric{\mathop{\mathrm{Ric}}\nolimits}
\def\BE{\mathop{\mathrm{BE}}\nolimits}
\def\CD{\mathop{\mathrm{CD}}\nolimits}
\def\RCD{\mathop{\mathrm{RCD}}\nolimits}
\def\HS{\mathop{\mathrm{HS}}\nolimits}
\newcommand{\Grad}{\bm{\nabla}}
\newcommand{\Lap}{\bm{\Delta}}
\newcommand{\rev}[1]{\overleftarrow{#1}}
\title{A semigroup approach to Finsler geometry:\\ Bakry--Ledoux's isoperimetric inequality}
\author{Shin-ichi Ohta\thanks{Department of Mathematics, Osaka University,
Osaka 560-0043, Japan ({\sf s.ohta@math.sci.osaka-u.ac.jp})}
\thanks{RIKEN Center for Advanced Intelligence Project (AIP),
1-4-1 Nihonbashi, Tokyo 103-0027, Japan}
\thanks{Supported in part by JSPS Grant-in-Aid for Scientific Research (KAKENHI)
15K04844, 19H01786.}}
\date{\today}
\begin{document}

\maketitle

\begin{abstract}
We develop the celebrated semigroup approach \`a la Bakry et al on Finsler manifolds,
where natural Laplacian and heat semigroup are nonlinear,
based on the Bochner--Weitzenb\"ock formula established by Sturm and the author.
We show the $L^1$-gradient estimate on Finsler manifolds
(under some additional assumptions in the noncompact case),
which is equivalent to a lower weighted Ricci curvature bound and the improved Bochner inequality.
As a geometric application, we prove Bakry--Ledoux's Gaussian isoperimetric inequality,
again under some additional assumptions in the noncompact case.
This extends Cavalletti--Mondino's inequality on reversible Finsler manifolds to non-reversible metrics,
and improves the author's previous estimate,
both based on the localization (also called needle decomposition) method.
\smallskip

\noindent
Mathematics Subject Classification (2010): 53C60, 58J35, 49Q20
\end{abstract}

\renewcommand{\contentsname}{{\large Contents}}
{\small
\tableofcontents
}

\section{Introduction}

The aim of this article is to put forward the semigroup approach
in geometric analysis on Finsler manifolds,
based on the Bochner--Weitzenb\"ock formula established in \cite{OSbw}.
There are already a number of applications of the Bochner--Weitzenb\"ock formula
(including \cite{WX,Xi,YH,Oineq}),
and the machinery in this article would contribute to a further development.
In addition, our treatment of a nonlinear generator and the associated nonlinear semigroup
(Laplacian and heat semigroup) could be of independent interest from the analytic viewpoint.

The celebrated theory developed by Bakry, \'Emery, Ledoux et al
(called the \emph{$\Gamma$-calculus}) studies symmetric generators
and the associated linear, symmetric diffusion semigroups under a kind of Bochner inequality
(called the (\emph{analytic}) \emph{curvature-dimension condition}).
Attributed to Bakry--\'Emery's original work \cite{BE},
this condition will be denoted by $\BE(K,N)$ in this introduction,
where $K \in \R$ and $N \in (1,\infty]$ are parameters corresponding to
`curvature' and `dimension', respectively.
This technique is extremely powerful in studying various inequalities
(log-Sobolev and Poincar\'e inequalities, gradient estimates, etc.) in a unified way,
we refer to \cite{BE} and the recent book \cite{BGL} for more on this theory.

On a Riemannian manifold equipped with the Laplacian $\Delta$,
$\BE(K,N)$ means the following Bochner-type inequality:
\[ \Delta \bigg[ \frac{\|\nabla u\|^2}{2} \bigg] -\langle \nabla(\Delta u),\nabla u \rangle
 \ge K\|\nabla u\|^2 +\frac{(\Delta u)^2}{N}. \]
Thereby a Riemannian manifold with Ricci curvature not less than $K$
and dimension not greater than $N$ (more generally,
a weighted Riemannian manifold of weighted Ricci curvature $\Ric_N \ge K$)
is a fundamental example satisfying $\BE(K,N)$.

Later, inspired by \cite{CMS,OV},
Sturm~\cite{vRS,StI,StII} and Lott--Villani~\cite{LV} introduced
the (\emph{geometric}) \emph{curvature-dimension condition} $\CD(K,N)$
for metric measure spaces in terms of optimal transport theory.
The condition $\CD(K,N)$ characterizes $\Ric \ge K$ and $\dim \le N$ (or $\Ric_N \ge K$)
for (weighted) Riemannian manifolds,
and its formulation requires a lower regularity of spaces than $\BE(K,N)$.
We refer to Villani's book \cite{Vi} for more on this rapidly developing theory.
It was shown in \cite{Oint} that $\CD(K,N)$ also holds
and characterizes $\Ric_N \ge K$ for Finsler manifolds,
where the natural Laplacian and the associated heat semigroup are nonlinear.
For this reason, Ambrosio, Gigli and Savar\'e~\cite{AGSrcd} introduced
a reinforced version $\RCD(K,\infty)$ called the \emph{Riemannian curvature-dimension condition}
as the combination of $\CD(K,\infty)$ and the linearity of heat semigroup,
followed by the finite-dimensional analogue $\RCD^*(K,N)$
investigated by Erbar, Kuwada and Sturm~\cite{EKS} (see also \cite{Gi1,Gi2}).
It then turned out that $\RCD^*(K,N)$ is equivalent to $\BE(K,N)$ (\cite{AGSboc,EKS}),
this equivalence justifies the term `curvature-dimension condition'
which actually came from the similarity to Bakry's theory.

In this article, we develop the theory of Bakry et al on Finsler manifolds.
We consider a Finsler manifold $M$ equipped with a Finsler metric $F:TM \lra [0,\infty)$
and a positive $\cC^{\infty}$-measure $\fm$ on $M$.
We will not assume that $F$ is \emph{reversible}, thereby $F(-v) \neq F(v)$ is allowed.
The key ingredient, the \emph{Bochner inequality} under $\Ric_N \ge K$,
was established in \cite{OSbw} as follows:
\begin{equation}\label{eq:B}
\Delta\!^{\Grad u} \bigg[ \frac{F^2(\Grad u)}{2} \bigg] -d(\Lap u)(\Grad u)
 \ge KF^2(\Grad u) +\frac{(\Lap u)^2}{N}.
\end{equation}
This Bochner inequality has the same form as the Riemanian case
by means of the mixture of the nonlinear Laplacian $\Lap$ and its linearization $\Delta\!^{\Grad u}$.
Despite of this mixture, we could derive Bakry--\'Emery's $L^2$-gradient estimate
as well as Li--Yau's estimates on compact manifolds (see \cite[\S 4]{OSbw}).
We proceed further in this direction and show the \emph{improved Bochner inequality}
under $\Ric_{\infty} \ge K$ (Proposition~\ref{pr:Boc+}):
\begin{equation}\label{eq:B+}
\Delta\!^{\Grad u} \bigg[ \frac{F^2(\Grad u)}{2} \bigg] -d(\Lap u)(\Grad u) -KF^2(\Grad u)
 \ge d[F(\Grad u)] \big( \nabla^{\Grad u} [F(\Grad u)] \big).
\end{equation}
The first application of \eqref{eq:B+} is the \emph{$L^1$-gradient estimate} (Theorem~\ref{th:L1}),
where we include also the noncompact case but with some additional (likely redundant) assumptions,
see the theorem below where we assume the same conditions.
We also see that the Bochner inequalities \eqref{eq:B} (with $N=\infty$), \eqref{eq:B+}
and the $L^2$- and $L^1$-gradient estimates are all equivalent to $\Ric_{\infty} \ge K$
(Theorem~\ref{th:char}).

The second, geometric application of \eqref{eq:B+} is a generalization of
\emph{Bakry--Ledoux's Gaussian isoperimetric inequality} (Theorem~\ref{th:BL}):

\begin{thm}[Bakry--Ledoux's isoperimetric inequality]
Let $(M,F,\fm)$ be complete and satisfy $\Ric_{\infty} \ge K>0$, $\fm(M)=1$,
$\sC_F<\infty$ and $\sS_F<\infty$.
We also assume that
\[ d[F(\Grad u_t)] \big( \nabla^{\Grad u_t} [F(\Grad u_t)] \big) \,\in L^1(M) \]
holds for any global solution $(u_t)_{t \ge 0}$ to the heat equation with $u_0 \in \cC^{\infty}_c(M)$
and any $t>0$.
Then we have
\begin{equation}\label{eq:BL}
\cI_{(M,F,\fm)}(\theta) \ge \cI_K(\theta)
\end{equation}
for all $\theta \in [0,1]$, where
\[ \cI_K(\theta):=\sqrt{\frac{K}{2\pi}} \e^{-Kc^2(\theta)/2} \qquad
 \text{with}\ \ \theta=\int_{-\infty}^{c(\theta)} \sqrt{\frac{K}{2\pi}} \e^{-Ka^2/2} \,da. \]
\end{thm}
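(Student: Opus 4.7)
The plan is to follow Bakry--Ledoux's semigroup interpolation argument, adapted to the nonlinear Finsler heat flow $(P_t)_{t \ge 0}$ on $(M,F,\fm)$. The intermediate target is the pointwise functional inequality
\[ \cI_K(P_t f)(x) \le P_t\!\left( \sqrt{\cI_K^2(f) + (1 - \e^{-2Kt}) F^2(\Grad f)} \right)(x) \]
for every $x \in M$, every $t > 0$ and every smooth $f : M \to [0,1]$. Once this is established, integrating against $\fm$ (using the $\fm$-invariance of the heat semigroup) and letting $t \to \infty$, with ergodicity under $\Ric_\infty \ge K > 0$ giving $P_t f \to \int_M f \,d\fm$, yields the Bobkov-type functional inequality
\[ \cI_K\!\bigg( \int_M f \,d\fm \bigg) \le \int_M \sqrt{\cI_K^2(f) + F^2(\Grad f)} \,d\fm. \]
Applied to a smooth approximation $f = u_\ve$ of $\mathbf{1}_A$ for a Borel set $A$ with $\fm(A) = \theta$, the left-hand side equals $\cI_K(\theta)$, while $\cI_K \circ u_\ve \to 0$ pointwise $\fm$-almost everywhere (since $\cI_K(0) = \cI_K(1) = 0$) and a coarea-type computation shows that $\int_M F(\Grad u_\ve)\,d\fm$ recovers the Minkowski content entering the definition of $\cI_{(M,F,\fm)}(\theta)$, yielding \eqref{eq:BL}.

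To prove the pointwise inequality, fix $t > 0$ and $x \in M$, set $u_s := P_{t-s} f$, and introduce the interpolation
\[ \Psi(s) := P_s\!\left( \sqrt{\cI_K^2(u_s) + a(s)\, F^2(\Grad u_s)} \right)(x), \quad 0 \le s \le t, \]
with $a(s) = 1 - \e^{-2Ks}$, so that $\Psi(0) = \cI_K(P_t f)(x)$ and $\Psi(t)$ is the desired right-hand side. Differentiating in $s$, using $\del_s u_s = -\Lap u_s$ on the inner argument together with the Ohta--Sturm identity $\del_s[F^2(\Grad u_s)/2] = -D[\Lap u_s](\Grad u_s)$ and $\del_s P_s = \Lap P_s$ on the outer semigroup, and rewriting the arising $\Lap$-terms through the linearization $\Delta\!^{\Grad u_s}$, one obtains an integrand that is estimable from below by exactly the left-hand side of the improved Bochner inequality \eqref{eq:B+}. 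The ODE $a'(s) = 2K(1 - a(s))$ is chosen precisely so that the contribution $a'(s)F^2(\Grad u_s)$ cancels against the curvature term $KF^2(\Grad u_s)$ in \eqref{eq:B+}, while the residual cross term carrying the mixed gradient $D[F(\Grad u_s)](\nabla^{\Grad u_s}[F(\Grad u_s)])$ produced by differentiating the square root is absorbed by the extra term on the right-hand side of \eqref{eq:B+}. This is the exact place where the \emph{improved} Bochner inequality, rather than the plain \eqref{eq:B}, is indispensable. One concludes $\Psi'(s) \ge 0$, hence $\Psi(0) \le \Psi(t)$.

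The main obstacle will be the rigorous justification of these semigroup manipulations in the nonlinear Finsler setting. Unlike in the linear $\RCD$ theory, one cannot freely commute derivatives with the semigroup: the Laplacian $\Lap$ is nonlinear, the linearization $\Delta\!^{\Grad u_s}$ degenerates on $\{\Grad u_s = 0\}$, and the map $s \mapsto P_{t-s} f$ has only limited a priori regularity. My strategy is to carry out the differentiation on the open set $\{F(\Grad u_s) > 0\}$, using the regularity theory for the Finsler heat flow available under $\sS_F < \infty$, after pre-regularizing $f$ so that its range lies strictly inside $(0,1)$ (keeping $\cI_K(f)$ bounded away from $0$ and $\cI_K'(f)$ bounded), and to handle the degenerate locus by a cutoff limit. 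A secondary subtlety, specific to the non-reversible setting, is that $\int_M F(\Grad u_\ve)\,d\fm$ is intrinsically asymmetric, so the approximation $u_\ve$ must be compatible with the one-sided Minkowski content underlying $\cI_{(M,F,\fm)}$; the uniform smoothness assumption $\sS_F < \infty$ guarantees a two-sided equivalence between the forward and backward norms and permits these asymmetric estimates to be transported cleanly across the argument.
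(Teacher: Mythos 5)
Your overall strategy coincides with the paper's: semigroup interpolation \`a la Bakry--Ledoux driven by the improved Bochner inequality~\eqref{eq:B+}, combined with ergodicity under $\Ric_\infty\ge K>0$ to pass from the pointwise estimate to the integrated Bobkov-type inequality, and finally approximation of indicator functions to extract the isoperimetric profile. You also correctly identify that the extra term on the right of~\eqref{eq:B+} is what absorbs the cross term coming from differentiating the square root, and that this is where the improved inequality (rather than~\eqref{eq:B}) is indispensable. All of this matches the paper.

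However, there is a genuine gap at the heart of the interpolation step. You place the \emph{nonlinear} heat semigroup $P_s$ on the outside, writing $\Psi(s)=P_s\bigl(\sqrt{\cI_K^2(u_s)+a(s)F^2(\Grad u_s)}\bigr)(x)$ and invoking ``$\del_s P_s=\Lap P_s$ on the outer semigroup.'' For a nonlinear semigroup this chain rule is not available: differentiating $P_s(g_s)$ in $s$ produces $\Lap[P_s(g_s)]$ from the time slot \emph{plus} the differential $DP_s|_{g_s}(\del_s g_s)$ of the nonlinear map $P_s$, and the latter is not $P_s(\del_s g_s)$. That differential is precisely the \emph{linearized} heat semigroup. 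The paper resolves this by replacing the outer operator with $P^{\Grad u}_{s,t}$ (the time-inhomogeneous linear semigroup generated by $\Delta\!^{V_s}$ along the fixed trajectory $(u_s)$) and its adjoint $\widehat{P}^{\Grad u}_{s,t}$, for which the identity
\[
\del_s\bigl[P^{\Grad u}_{s,t}(\zeta_s)\bigr]=P^{\Grad u}_{s,t}\bigl(\del_s\zeta_s-\Delta\!^{\Grad u_s}\zeta_s\bigr)
\]
holds, after which the sign is controlled by Corollary~\ref{cr:Boc+}. You gesture at ``rewriting the arising $\Lap$-terms through the linearization $\Delta\!^{\Grad u_s}$,'' but no amount of regularity theory or cutoff limits repairs the wrong outer operator; the linearized semigroup and its adjoint must be introduced explicitly. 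This is exactly the machinery built in Section~\ref{sc:heat} (Proposition~\ref{pr:lin} and the adjoint relation~\eqref{eq:adj}), and your proposal does not supply it.

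A secondary point: your handling of non-reversibility in the final approximation step is off. Testing with $u^\ve(x)=\max\{1-\ve^{-1}d(x,A),0\}$ makes $F(\Grad u^\ve)$ supported on the \emph{backward} neighborhood $B^-(A,\ve)\setminus A$, so the functional inequality yields the isoperimetric bound for the reverse structure $\rev{F}$, not $F$. The paper does not transport between forward and backward via $\Lambda_F\le\sqrt{\sS_F}$ (which would cost a factor and reproduce only~\eqref{eq:needle}); it instead observes that $\Ric_\infty\ge K$ is invariant under $F\mapsto\rev{F}$ and so the sharp conclusion for $\rev{F}$ implies the one for $F$. Using norm equivalence at this step, as you suggest, would destroy the sharpness that is the whole point of the theorem.
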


Here $\cI_{(M,F,\fm)}:[0,1] \lra [0,\infty)$ is the \emph{isoperimetric profile}
defined as the least boundary area of sets $A \subset M$ with $\fm(A)=\theta$
(see the beginning of Section~\ref{sc:BL}), and
$\sC_F$ (resp.\ $\sS_F$) is the ($2$-)\emph{uniform convexity} (resp.\ \emph{smoothness})
\emph{constant} which bounds the \emph{reversibility},
\begin{equation}\label{eq:revF}
\Lambda_F:=\sup_{v \in TM \setminus 0} \frac{F(v)}{F(-v)} \,\in [1,\infty],
\end{equation}
as $\Lambda_F \le \min\{\sqrt{\sC_F}, \sqrt{\sS_F} \}$ (see Lemma~\ref{lm:rev}).
(In particular, the forward completeness is equivalent to the backward completeness,
and we denoted it by the plain \emph{completeness} in the theorem.)
All the conditions $\sC_F<\infty$, $\sS_F<\infty$, and 
$d[F(\Grad u_t)](\nabla^{\Grad u_t} [F(\Grad u_t)]) \in L^1(M)$
hold true in the compact case.
In the noncompact case, however, there are technical difficulties and
it is unclear how to remove them in this semigroup approach (see \S\ref{ssc:hypo} for a discussion).
We remark that, in \cite{Oneedle} based on the needle decomposition,
we did not need those conditions.

The inequality \eqref{eq:BL} has the same form as the Riemannian case in \cite{BL},
and it is sharp and the model space is the real line $\R$ equipped with
the normal (Gaussian) distribution $d\fm=\sqrt{K/2\pi} \, \e^{-Kx^2/2} \,dx$.
See \cite{BL} for the original work of Bakry and Ledoux on linear diffusion semigroups
(influenced by Bobkov's works \cite{Bo1,Bo2}),
and \cite{Bor,SC} for the classical Euclidean or Hilbert cases.
We also refer to \cite{AM} for the Gaussian isoperimetric inequality on $\RCD(K,\infty)$-spaces
by a refinement of the $\Gamma$-calculus.

The above theorem extends Cavalletti--Mondino's isoperimetric inequality in \cite{CM}
to non-reversible Finsler manifolds.
Precisely, they considered essentially non-branching metric measure spaces $(X,d,\fm)$
satisfying $\CD(K,N)$ for $K \in \R$ and $N \in (1,\infty)$,
and showed the sharp \emph{L\'evy--Gromov type isoperimetric inequality} of the form
\[ \cI_{(X,d,\fm)}(\theta) \ge \cI_{K,N,D}(\theta) \]
with $\diam X \le D \ (\le \infty)$.
The case of $N=\infty$ is not included in \cite{CM} for technical reasons
on the structure of $\CD(K,\infty)$-spaces,
but the same argument gives \eqref{eq:BL} (corresponding to $N=D=\infty$)
for reversible Finsler manifolds.
The proof in \cite{CM} is based on the \emph{needle decomposition}
(also called \emph{localization}) inspired by Klartag's work \cite{Kl} on Riemannian manifolds,
extending the successful technique in convex geometry.
Along the lines of \cite{CM}, in \cite{Oneedle} we have generalized
the needle decomposition to non-reversible Finsler manifolds,
however, then we obtain only a weaker isoperimetric inequality,
\begin{equation}\label{eq:needle}
\cI_{(M,F,\fm)}(\theta) \ge \Lambda_F^{-1} \cdot \cI_{K,N,D}(\theta),
\end{equation}
with $\Lambda_F$ in \eqref{eq:revF}.
The inequality \eqref{eq:BL} improves \eqref{eq:needle}
in the case where $N=D=\infty$ and $K>0$,
and supports a conjecture that the sharp isoperimetric inequality
in the non-reversible case is the same as the reversible case,
namely $\Lambda_F^{-1}$ in \eqref{eq:needle} would be removed.

The organization of this article is as follows:
In Section~\ref{sc:prel} we review the basics of Finsler geometry,
including the weighted Ricci curvature and the Bochner--Weitzenb\"ock formula.
Section~\ref{sc:heat} is devoted to a detailed study of the nonlinear heat semigroup
and its linearizations, we improve the Bochner inequality under $\Ric_{\infty} \ge K$
and show the $L^1$-gradient estimate.
We prove the isoperimetric inequality in Section~\ref{sc:BL}.
\bigskip

{\it Acknowledgements.}
I am grateful to Kazumasa Kuwada for his suggestion to consider this problem
and for many valuable discussions.
I also thank Karl-Theodor Sturm and Kohei Suzuki for stimulating discussions.

\section{Geometry and analysis on Finsler manifolds}\label{sc:prel}

We review the basics of Finsler geometry (we refer to \cite{BCS,Shlec} for further reading),
and introduce the weighted Ricci curvature and the nonlinear Laplacian
studied in \cite{Oint,OShf} (see also \cite{GS} for the latter).

Throughout the article, let $M$ be a connected $\cC^{\infty}$-manifold
without boundary of dimension $n \ge 2$.
We also fix an arbitrary positive $\cC^{\infty}$-measure $\fm$ on $M$.

\subsection{Finsler manifolds}\label{ssc:Fmfd}

Given local coordinates $(x^i)_{i=1}^n$ on an open set $U \subset M$,
we will always use the fiber-wise linear coordinates
$(x^i,v^j)_{i,j=1}^n$ of $TU$ such that
\[ v=\sum_{j=1}^n v^j \frac{\del}{\del x^j}\Big|_x \in T_xM, \qquad x \in U. \]

\begin{definition}[Finsler structures]\label{df:Fstr}
We say that a nonnegative function $F:TM \lra [0,\infty)$ is
a \emph{$\cC^{\infty}$-Finsler structure} of $M$ if the following three conditions hold:
\begin{enumerate}[(1)]
\item(\emph{Regularity})
$F$ is $\cC^{\infty}$ on $TM \setminus 0$,
where $0$ stands for the zero section;

\item(\emph{Positive $1$-homogeneity})
It holds $F(cv)=cF(v)$ for all $v \in TM$ and $c \ge 0$;

\item(\emph{Strong convexity})
The $n \times n$ matrix
\begin{equation}\label{eq:gij}
\big( g_{ij}(v) \big)_{i,j=1}^n :=
 \bigg( \frac{1}{2}\frac{\del^2 (F^2)}{\del v^i \del v^j}(v) \bigg)_{i,j=1}^n
\end{equation}
is positive-definite for all $v \in TM \setminus 0$.
\end{enumerate}
We call such a pair $(M,F)$ a \emph{$\cC^{\infty}$-Finsler manifold}.
\end{definition}

In other words, $F$ provides a Minkowski norm on each tangent space
which varies smoothly in horizontal directions.
If $F(-v)=F(v)$ holds for all $v \in TM$, then we say that $F$ is \emph{reversible}
or \emph{absolutely homogeneous}.
The strong convexity means that the unit sphere $T_xM \cap F^{-1}(1)$
(called the \emph{indicatrix}) is `positively curved' and implies the strict convexity:
$F(v+w) \le F(v)+F(w)$ for all $v,w \in T_xM$ and equality holds
only when $v=aw$ or $w=av$ for some $a \ge 0$.

In the coordinates $(x^i,\alpha_j)_{i,j=1}^n$ of $T^*U$ given by
$\alpha=\sum_{j=1}^n \alpha_j \,dx^j$,
we will also consider
\[ g^*_{ij}(\alpha) :=\frac{1}{2} \frac{\del^2[(F^*)^2]}{\del \alpha_i \del \alpha_j}(\alpha),
\qquad i,j=1,2,\ldots,n, \]
for $\alpha \in T^*U \setminus 0$.
Here $F^*:T^*M \lra [0,\infty)$ is the \emph{dual Minkowski norm} to $F$, namely
\[ F^*(\alpha) :=\sup_{v \in T_xM,\, F(v) \le 1} \alpha(v)
 =\sup_{v \in T_xM,\, F(v)=1} \alpha(v) \]
for $\alpha \in T_x^*M$.
It is clear by definition that $\alpha(v) \le F^*(\alpha) F(v)$, and hence
\[ \alpha(v) \ge -F^*(\alpha)F(-v), \qquad \alpha(v) \ge -F^*(-\alpha)F(v). \]
We remark that, however, $\alpha(v) \ge -F^*(\alpha)F(v)$ does not hold in general.

Let us denote by $\cL^*:T^*M \lra TM$ the \emph{Legendre transform}.
Precisely, $\cL^*$ is sending $\alpha \in T_x^*M$ to the unique element $v \in T_xM$
such that $F(v)=F^*(\alpha)$ and $\alpha(v)=F^*(\alpha)^2$.
In coordinates we can write down
\[ \cL^*(\alpha)=\sum_{i,j=1}^n g_{ij}^*(\alpha) \alpha_i \frac{\del}{\del x^j}\Big|_x
 =\sum_{j=1}^n \frac{1}{2} \frac{\del[(F^*)^2]}{\del \alpha_j}(\alpha) \frac{\del}{\del x^j}\Big|_x \]
for $\alpha \in T_x^*M \setminus 0$ (the latter expression makes sense also at $0$).
Note that $g^*_{ij}(\alpha) =g^{ij}(\cL^*(\alpha))$ for $\alpha \in T_x^*M \setminus 0$,
where $(g^{ij}(v))$ denotes the inverse matrix of $(g_{ij}(v))$.
The map $\cL^*|_{T^*_xM}$ is being a linear operator only when $F|_{T_xM}$
comes from an inner product.
We also define $\cL:=(\cL^*)^{-1}:TM \lra T^*M$.

For $x,y \in M$, we define the (asymmetric) \emph{distance} from $x$ to $y$ by
\[ d(x,y):=\inf_{\eta} \int_0^1 F\big( \dot{\eta}(t) \big) \,dt, \]
where $\eta:[0,1] \lra M$ runs over all $\cC^1$-curves such that $\eta(0)=x$ and $\eta(1)=y$.
Note that $d(y,x) \neq d(x,y)$ can happen since $F$ is only positively homogeneous.
A $\cC^{\infty}$-curve $\eta$ on $M$ is called a \emph{geodesic}
if it is locally minimizing and has a constant speed with respect to $d$,
similarly to Riemannian or metric geometry.
See \eqref{eq:geod} below for the precise geodesic equation.
For $v \in T_xM$, if there is a geodesic $\eta:[0,1] \lra M$
with $\dot{\eta}(0)=v$, then we define the \emph{exponential map}
by $\exp_x(v):=\eta(1)$.
We say that $(M,F)$ is \emph{forward complete} if the exponential
map is defined on whole $TM$.
Then the Hopf--Rinow theorem ensures that any pair of points
is connected by a minimal geodesic (see \cite[Theorem~6.6.1]{BCS}).

Given each $v \in T_xM \setminus 0$, the positive-definite matrix
$(g_{ij}(v))_{i,j=1}^n$ in \eqref{eq:gij} induces the Riemannian structure $g_v$ of $T_xM$ by
\begin{equation}\label{eq:gv}
g_v\bigg( \sum_{i=1}^n a_i \frac{\del}{\del x^i}\Big|_x,
 \sum_{j=1}^n b_j \frac{\del}{\del x^j}\Big|_x \bigg)
 := \sum_{i,j=1}^n g_{ij}(v) a_i b_j.
\end{equation}
Notice that this definition is coordinate-free and $g_v(v,v)=F^2(v)$ holds.
One can regard $g_v$ as the best Riemannian approximation of $F|_{T_xM}$
in the direction $v$.
The \emph{Cartan tensor}
\[ A_{ijk}(v):=\frac{F(v)}{2} \frac{\del g_{ij}}{\del v^k}(v),
 \qquad v \in TM \setminus 0, \]
measures the variation of $g_v$ in vertical directions,
and vanishes everywhere on $TM \setminus 0$
if and only if $F$ comes from a Riemannian metric.

The following useful fact on homogeneous functions (see \cite[Theorem~1.2.1]{BCS})
plays a fundamental role in our calculus.

\begin{theorem}[Euler's theorem]\label{th:Euler}
Suppose that a differentiable function $H:\R^n \setminus 0 \lra \R$ satisfies
$H(cv)=c^r H(v)$ for some $r \in \R$ and all $c>0$ and $v \in \R^n \setminus 0$
$($that is, \emph{positively $r$-homogeneous}$)$.
Then we have, for all $v \in \R^n \setminus 0$,
\[ \sum_{i=1}^n \frac{\del H}{\del v^i}(v)v^i=rH(v). \]
\end{theorem}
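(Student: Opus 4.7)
The plan is to derive the identity by differentiating the homogeneity relation $H(cv) = c^r H(v)$ with respect to the scalar parameter $c$ and then specializing at $c = 1$. Since $v \in \R^n \setminus 0$ is fixed and $cv \in \R^n \setminus 0$ for every $c > 0$, and $H$ is assumed differentiable on $\R^n \setminus 0$, both sides of this relation are differentiable in $c$ on $(0,\infty)$, so the procedure is justified with no regularity issues.

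First I would apply the chain rule to the left-hand side, viewed as the composition of the curve $c \mapsto cv$ with $H$. This gives
\[
\frac{d}{dc}\big[ H(cv) \big]
= \sum_{i=1}^n \frac{\del H}{\del v^i}(cv) \cdot v^i.
\]
Then I would differentiate the right-hand side directly, obtaining $\frac{d}{dc}[c^r H(v)] = r c^{r-1} H(v)$. Equating the two expressions yields
\[
\sum_{i=1}^n \frac{\del H}{\del v^i}(cv) \, v^i = r c^{r-1} H(v)
\qquad \text{for all } c > 0.
\]
Setting $c = 1$ produces exactly the asserted identity.

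There is essentially no obstacle here; the argument is a one-line application of the chain rule combined with the scaling relation, and no additional Finsler structure is used. The only point to check is that differentiating $H(cv)$ in $c$ with $v$ fixed is legitimate, which follows from the hypothesis that $H \in \cC^1(\R^n \setminus 0)$. Consequently, I expect the proof in the paper to consist of precisely this short derivation, and it will then be applied freely throughout the text to $F$, $F^2$, $g_{ij}$, and similar fiber-wise homogeneous objects.
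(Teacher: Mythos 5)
Your derivation is correct and is the standard one-line proof of Euler's identity: differentiate $H(cv)=c^r H(v)$ in $c$ by the chain rule and set $c=1$. The paper itself does not prove Theorem~\ref{th:Euler}; it cites it directly to \cite[Theorem~1.2.1]{BCS} as a known fact, so there is no in-paper argument to compare against, but your proof is exactly the standard one that reference gives.
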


Observe that $g_{ij}$ is positively $0$-homogeneous on each $T_xM$, and hence
\begin{equation}\label{eq:Av}
\sum_{i=1}^n A_{ijk}(v)v^i =\sum_{j=1}^n A_{ijk}(v)v^j
 =\sum_{k=1}^n A_{ijk}(v)v^k =0
\end{equation}
for all $v \in TM \setminus 0$ and $i,j,k=1,2,\ldots,n$.
Define the \emph{formal Christoffel symbol}
\begin{equation}\label{eq:gamma}
\gamma^i_{jk}(v):=\frac{1}{2}\sum_{l=1}^n g^{il}(v) \bigg\{
 \frac{\del g_{lk}}{\del x^j}(v) +\frac{\del g_{jl}}{\del x^k}(v)
 -\frac{\del g_{jk}}{\del x^l}(v) \bigg\}
\end{equation}
for $v \in TM \setminus 0$,  and the \emph{geodesic spray coefficients}
and the \emph{nonlinear connection}
\[ G^i(v):=\sum_{j,k=1}^n \gamma^i_{jk}(v) v^j v^k, \qquad
 N^i_j(v):=\frac{1}{2} \frac{\del G^i}{\del v^j}(v) \]
for $v \in TM \setminus 0$ ($G^i(0)=N^i_j(0):=0$ by convention).
Note that $G^i$ is positively $2$-homogeneous, hence
Theorem~\ref{th:Euler} implies $\sum_{j=1}^n N^i_j(v) v^j=G^i(v)$.

By using $N^i_j$,
the coefficients of the \emph{Chern connection} are given by
\begin{equation}\label{eq:Gamma}
\Gamma^i_{jk}(v):=\gamma^i_{jk}(v)
 -\sum_{l,m=1}^n \frac{g^{il}}{F}(A_{lkm}N^m_j +A_{jlm}N^m_k -A_{jkm}N^m_l)(v)
\end{equation}
on $TM \setminus 0$.
The corresponding \emph{covariant derivative} of a vector field $X$ by $v \in T_xM$
with \emph{reference vector} $w \in T_xM \setminus 0$ is defined as
\begin{equation}\label{eq:covd}
D_v^w X(x):=\sum_{i,j=1}^n \bigg\{ v^j \frac{\del X^i}{\del x^j}(x)
 +\sum_{k=1}^n \Gamma^i_{jk}(w) v^j X^k(x) \bigg\} \frac{\del}{\del x^i}\Big|_x \in T_xM.
\end{equation}
Then the \emph{geodesic equation} is written as, with the help of \eqref{eq:Av},
\begin{equation}\label{eq:geod}
D_{\dot{\eta}}^{\dot{\eta}} \dot{\eta}(t)
 =\sum_{i=1}^n \big\{ \ddot{\eta}^i(t) +G^i\big( \dot{\eta}(t) \big) \big\}
 \frac{\del}{\del x^i} \Big|_{\eta(t)} =0.
\end{equation}

\subsection{Uniform convexity and smoothness}\label{ssc:us}

We will need the following quantity associated with $(M,F)$:
\[ \sS_F :=\sup_{x \in M} \sup_{v,w \in T_xM \setminus 0} \frac{g_v(w,w)}{F^2(w)} \,\in [1,\infty]. \]
Since $g_v(w,w) \le \sS_F F^2(w)$ and $g_v$ is the Hessian of $F^2/2$ at $v$,
the constant $\sS_F$ measures the (fiber-wise) concavity of $F^2$
and is called the ($2$-)\emph{uniform smoothness constant} (see \cite{Ouni}).
We remark that $\sS_F=1$ holds if and only if $(M,F)$ is Riemannian.
The following lemma is a standard fact, we give a proof for thoroughness.

\begin{lemma}\label{lm:us}
For any $x \in M$, $v \in T_xM \setminus 0$ and $\alpha:=\cL(v)$, we have
\[ \sup_{w \in T_xM \setminus 0} \frac{g_v(w,w)}{F^2(w)}
 =\sup_{\beta \in T^*_xM \setminus 0}
 \frac{F^*(\beta)^2}{g^*_{\alpha}(\beta,\beta)}, \]
where $g^*_{\alpha}$ is the inner product of $T^*_xM$ defined by
\[ g^*_{\alpha}(\beta,\beta):=\sum_{i,j=1}^n g^*_{ij}(\alpha) \beta_i \beta_j,
 \qquad \beta=\sum_{i=1}^n \beta_i \,dx^i. \]
\end{lemma}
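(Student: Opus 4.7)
The plan is to exploit the fact, recorded immediately before the lemma, that $g^*_{ij}(\alpha)=g^{ij}(v)$ with $\alpha=\cL(v)$, which means that $g^*_\alpha$ on $T^*_xM$ is nothing but the inner product dual to $g_v$ on $T_xM$. Concretely, the musical isomorphism
\[ \flat_v:T_xM \lra T^*_xM, \qquad \flat_v(w):=g_v(w,\cdot), \]
is an isometry from $(T_xM,g_v)$ onto $(T^*_xM,g^*_\alpha)$; in particular, for $\beta=\flat_v(w)$ one has $g^*_\alpha(\beta,\beta)=g_v(w,w)$ and $\beta(w)=g_v(w,w)$. Both inequalities in the lemma follow by coupling this isometry with the defining bound $\beta(u)\le F^*(\beta)F(u)$ of $F^*$, plus Cauchy--Schwarz for $g_v$.

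For the inequality $\le$ (i.e.\ LHS $\le$ RHS), I would fix $w \in T_xM\setminus 0$ and put $\beta:=\flat_v(w)$. Then $g_v(w,w)=\beta(w)\le F^*(\beta)F(w)$, and squaring gives
\[ \frac{g_v(w,w)}{F^2(w)}
 \,\le\, \frac{F^*(\beta)^2}{g_v(w,w)}
 \,=\, \frac{F^*(\beta)^2}{g^*_\alpha(\beta,\beta)}, \]
and taking the supremum on both sides yields the bound. For the reverse inequality, I would fix $\beta \in T^*_xM\setminus 0$ and set $w:=\flat_v^{-1}(\beta)$, so that again $g^*_\alpha(\beta,\beta)=g_v(w,w)$. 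For any $u \in T_xM$ the Cauchy--Schwarz inequality in $g_v$ gives $\beta(u)=g_v(w,u)\le \sqrt{g_v(w,w)}\sqrt{g_v(u,u)}$. Taking the supremum over $u$ with $F(u)\le 1$ turns the left-hand side into $F^*(\beta)$, while the right-hand side becomes $\sqrt{g_v(w,w)}\cdot\sup_{u\ne0}\sqrt{g_v(u,u)/F^2(u)}$; squaring and dividing by $g^*_\alpha(\beta,\beta)=g_v(w,w)$ gives
\[ \frac{F^*(\beta)^2}{g^*_\alpha(\beta,\beta)}
 \,\le\, \sup_{u\in T_xM\setminus 0}\frac{g_v(u,u)}{F^2(u)}, \]
and taking the supremum over $\beta$ completes the proof.

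There is essentially no obstacle: the only structural input is the dual-matrix identity $g^*_{ij}(\alpha)=g^{ij}(v)$, which is quoted from the preceding paragraph, and the standard estimate $\beta(w)\le F^*(\beta)F(w)$. Notably, the argument never invokes reversibility of $F$, even though $F^*$ is only the positively homogeneous dual of $F$; this works because both of the quantities being compared depend on $w$ (resp.\ $\beta$) only through its direction and the dual estimate is applied to a single choice of sign in each direction.
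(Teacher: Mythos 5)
Your proof is correct, and it is close in spirit to the paper's, but the two differ in execution in a way worth noting. The paper chooses coordinates with $g_{ij}(v)=\delta_{ij}$, so the musical isomorphism $\flat_v$ becomes the identity in coordinates; your version makes $\flat_v$ explicit and thus keeps the argument coordinate-free. For the inequality $\le$ the two proofs are identical modulo this reformulation (your $\beta=\flat_v(w)$ is exactly the paper's $\beta$ with $\beta(w)=1$, $\beta\in\Sph^*_x$). For the reverse inequality, however, the mechanisms differ: the paper fixes $w$ and selects the $F$-Legendre dual covector $\beta'$ (so $\beta'(w)=F^*(\beta')F(w)$), then uses the Euclidean Cauchy--Schwarz bound $\beta'(w)\le1$ — which implicitly relies on the surjectivity of the Legendre transform to cover all of $\Sph^*_x$; you instead fix $\beta$, pass to $w=\flat_v^{-1}(\beta)$, and apply Cauchy--Schwarz for $g_v$ directly, producing the universal bound $F^*(\beta)\le\sqrt{g^*_\alpha(\beta,\beta)}\cdot\sup_{u}\sqrt{g_v(u,u)/F^2(u)}$ with no surjectivity discussion needed. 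Both routes are elementary and correct; yours is arguably a touch cleaner on the $\ge$ side. Your closing remark about reversibility playing no role is also accurate and consistent with the paper's usage.
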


\begin{proof}
Choose local coordinates $(x^i)_{i=1}^n$ around $x$ such that
$g_{ij}(v)=\delta_{ij}$ and set
\begin{align*}
\Sph_x &:=\bigg\{ w=\sum_{i=1}^n w^i \frac{\del}{\del x^i} \in T_xM
 \,\bigg|\, \sum_{i=1}^n (w^i)^2=1 \bigg\}, \\
\Sph_x^* &:=\bigg\{ \beta=\sum_{i=1}^n \beta_i \,dx^i \in T_x^*M
 \,\bigg|\, \sum_{i=1}^n (\beta_i)^2=1 \bigg\}.
\end{align*}
First, given $w \in \Sph_x$, we take $\beta \in \Sph_x^*$ such that $\beta(w)=1$.
Then we have $1=\beta(w) \le F^*(\beta) F(w)$ and hence
\[ \frac{g_v(w,w)}{F^2(w)} =\frac{1}{F^2(w)} \le F^*(\beta)^2
 =\frac{F^*(\beta)^2}{g^*_{\alpha}(\beta,\beta)}. \]
Next, for $\beta' \in \Sph_x^*$, take $w' \in \Sph_x$ with $\beta'(w')=F^*(\beta')F(w')$.
Then $F^*(\beta')F(w')=\beta'(w') \le 1$ and hence $1/F^2(w') \ge F^*(\beta')^2$.
This completes the proof.
$\qedd$
\end{proof}

One can in a similar manner introduce the ($2$-)\emph{uniform convexity constant}:
\begin{equation}\label{eq:uc}
\sC_F :=\sup_{x \in M} \sup_{v,w \in T_xM \setminus 0} \frac{F^2(w)}{g_v(w,w)}
 =\sup_{x \in M} \sup_{\alpha,\beta \in T^*_xM \setminus 0}
 \frac{g^*_{\alpha}(\beta,\beta)}{F^*(\beta)^2} \,\in [1,\infty].
\end{equation}
Again, $\sC_F=1$ holds if and only if $(M,F)$ is Riemannian.
We remark that $\sS_F$ and $\sC_F$ control the \emph{reversibility constant} $\Lambda_F$
defined in \eqref{eq:revF} as follows.

\begin{lemma}\label{lm:rev}
We have
\[ \Lambda_F \le \min\{ \sqrt{\sS_F},\sqrt{\sC_F} \}. \]
\end{lemma}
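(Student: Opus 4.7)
The plan is to exploit the identity $g_v(v,v)=F^2(v)$, which is already recorded in the excerpt (and follows from Euler's theorem applied to the positively $2$-homogeneous function $F^2$). The point is that $F(-v)^2$ can be rewritten using $g_{-v}$, after which the definitions of $\sS_F$ and $\sC_F$ deliver the bound essentially for free.

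Concretely, fix $v \in T_xM \setminus 0$. Since $g_{-v}$ is a symmetric bilinear form on $T_xM$, bilinearity in its two slots gives
\[ F^2(-v) \,=\, g_{-v}(-v,-v) \,=\, g_{-v}(v,v). \]
Now I would apply the definition of $\sS_F$ with reference vector $-v$ and test vector $v$, obtaining $g_{-v}(v,v) \le \sS_F \,F^2(v)$, hence $F(-v) \le \sqrt{\sS_F}\,F(v)$. Replacing $v$ by $-v$ yields the symmetric inequality, so $F(v)/F(-v) \le \sqrt{\sS_F}$ for every nonzero $v$, whence $\Lambda_F \le \sqrt{\sS_F}$. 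The argument for $\sC_F$ is dual: by definition $F^2(w) \le \sC_F\, g_u(w,w)$ for all $u,w \neq 0$; taking $u=-v$ and $w=v$ gives $F^2(v) \le \sC_F\, g_{-v}(v,v) = \sC_F\, F^2(-v)$, hence $F(v)/F(-v) \le \sqrt{\sC_F}$ and therefore $\Lambda_F \le \sqrt{\sC_F}$. Combining the two bounds yields $\Lambda_F \le \min\{\sqrt{\sS_F},\sqrt{\sC_F}\}$.

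There is essentially no obstacle: the only subtle point is remembering that $g_{-v}$, as a \emph{bilinear} form, is insensitive to the sign of its arguments, so that $g_{-v}(-v,-v)=g_{-v}(v,v)$, at which stage the definitions of $\sS_F$ and $\sC_F$ apply directly. One should also note that the case $v=0$ is trivial (both $F(v)$ and $F(-v)$ vanish and need not be considered in the sup defining $\Lambda_F$).
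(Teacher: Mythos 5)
Your proof is correct and follows essentially the same route as the paper: rewrite $F^2(\pm v)$ as $g_{\pm v}(\pm v,\pm v)$ using $g_w(w,w)=F^2(w)$, use that the bilinear form $g_{\pm v}$ is insensitive to the sign of its arguments, and then invoke the defining bounds of $\sS_F$ and $\sC_F$. The only cosmetic difference is in the $\sS_F$ case: the paper compares $g_v(-v,-v)$ against $F^2(-v)$ with reference vector $v$ to get the inequality directly, whereas you bound $g_{-v}(v,v)$ with reference vector $-v$ and then replace $v$ by $-v$; the two are interchangeable.
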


\begin{proof}
For any $v \in TM \setminus 0$, we observe
\[ \frac{F^2(v)}{F^2(-v)} =\frac{g_v(v,v)}{F^2(-v)}
 =\frac{g_v(-v,-v)}{F^2(-v)} \le \sS_F, \]
and similarly
\[ \frac{F^2(v)}{F^2(-v)} =\frac{F^2(v)}{g_{-v}(v,v)} \le \sC_F. \]
$\qedd$
\end{proof}

\subsection{Weighted Ricci curvature}\label{ssc:wRic}

The \emph{Ricci curvature} (as the trace of the \emph{flag curvature}) on a Finsler manifold
is defined by using some connection.
Instead of giving a precise definition in coordinates (for which we refer to \cite{BCS}),
here we explain a useful interpretation in \cite[\S 6.2]{Shlec} going back to (at least) \cite{Au}.
Given a unit vector $v \in T_xM \cap F^{-1}(1)$, we extend it to a $\cC^{\infty}$-vector field $V$
on a neighborhood $U$ of $x$ in such a way that every integral curve of $V$ is geodesic,
and consider the Riemannian structure $g_V$ of $U$ induced from \eqref{eq:gv}.
Then the \emph{Finsler} Ricci curvature $\Ric(v)$ of $v$ with respect to $F$ coincides with
the \emph{Riemannian} Ricci curvature of $v$ with respect to $g_V$
(in particular, it is independent of the choice of $V$).

Inspired by the above interpretation of the Ricci curvature
as well as the theory of weighted Ricci curvature
(also called the \emph{Bakry--\'Emery--Ricci curvature}) of Riemannian manifolds,
the \emph{weighted Ricci curvature} for $(M,F,\fm)$ was introduced in \cite{Oint} as follows.
Recall that $\fm$ is a positive $\cC^{\infty}$-measure on $M$,
from here on it comes into play.

\begin{definition}[Weighted Ricci curvature]\label{df:wRic}
Given a unit vector $v \in T_xM$, let $V$ be a $\cC^{\infty}$-vector field
on a neighborhood $U$ of $x$ as above.
We decompose $\fm$ as $\fm=\e^{-\Psi}\vol_{g_V}$ on $U$,
where $\Psi \in \cC^{\infty}(U)$ and $\vol_{g_V}$ is the volume form of $g_V$.
Denote by $\eta:(-\ve,\ve) \lra M$ the geodesic such that $\dot{\eta}(0)=v$.
Then, for $N \in (-\infty,0) \cup (n,\infty)$, define
\[ \Ric_N(v):=\Ric(v) +(\Psi \circ \eta)''(0) -\frac{(\Psi \circ \eta)'(0)^2}{N-n}. \]
We also define as the limits:
\[ \Ric_{\infty}(v):=\Ric(v) +(\Psi \circ \eta)''(0), \qquad
 \Ric_n(v):=\lim_{N \downarrow n}\Ric_N(v). \]
For $c \ge 0$, we set $\Ric_N(cv):=c^2 \Ric_N(v)$.
\end{definition}

We will denote by $\Ric_N \ge K$, $K \in \R$, the condition
$\Ric_N(v) \ge KF^2(v)$ for all $v \in TM$.
In the Riemannian case, the study of $\Ric_{\infty}$ goes back to Lichnerowicz \cite{Li},
he showed a Cheeger--Gromoll type splitting theorem (see \cite{Osplit} for a Finsler counterpart).
The range $N \in (n,\infty)$ has been well studied by Bakry \cite[\S 6]{Ba},
Qian \cite{Qi} and many others.
The study of the range $N \in (-\infty,0)$ is more recent;
see \cite{Mineg} for isoperimetric inequalities, \cite{Oneg} for the curvature-dimension condition,
and \cite{Wy} for splitting theorems (for $N \in (-\infty,1]$).

It was established in \cite{Oint} (and \cite{Oneg} for $N<0$, \cite{Oneedle} for $N=0$) that,
for $K \in \R$, the bound $\Ric_N \ge K$  is equivalent to Lott, Sturm and Villani's
\emph{curvature-dimension condition} $\CD(K,N)$.
This extends the corresponding result on weighted Riemannian manifolds
and has many geometric and analytic applications (see \cite{Oint,OShf} among others).

\begin{remark}[$\bS$-curvature]\label{rm:S-curv}
For a Riemannian manifold $(M,g,\vol_g)$ endowed with the Riemannian volume measure,
clearly we have $\Psi \equiv 0$ and hence $\Ric_N =\Ric$ for all $N$.
It is also known that, for Finsler manifolds of \emph{Berwald type}
(i.e., $\Gamma_{ij}^k$ is constant on each $T_xM \setminus 0$),
the \emph{Busemann--Hausdorff measure} satisfies $(\Psi \circ \eta)' \equiv 0$
(in other words, Shen's \emph{$\bS$-curvature} vanishes, see \cite[\S 7.3]{Shlec}).
For a general Finsler manifold, however, there may not exist any measure with vanishing
$\bS$-curvature (see \cite{ORand} for such an example).
This is a reason why we chose to begin with an arbitrary measure $\fm$.
\end{remark}

For later convenience, we introduce the following notations.

\begin{definition}[Reverse Finsler structures]\label{df:rev}
We define the \emph{reverse Finsler structure} $\rev{F}$ of $F$ by
$\rev{F}(v):=F(-v)$.
\end{definition}

We will put an arrow $\leftarrow$ on those quantities associated with $\rev{F}$,
we have for example $\rev{d}\!(x,y)=d(y,x)$, $\rev{\Ric}_N(v)=\Ric_N(-v)$
and $\rev{\Grad}u=-\Grad(-u)$.
We say that $(M,F)$ is \emph{backward complete} if $(M,\rev{F})$ is forward complete.
If $\Lambda_F<\infty$, then these completenesses are mutually equivalent,
and we may call it simply \emph{completeness}.

\subsection{Nonlinear Laplacian and heat flow}\label{ssc:heat}

For a differentiable function $u:M \lra \R$, the \emph{gradient vector}
at $x$ is defined as the Legendre transform of the derivative of $u$:
$\Grad u(x):=\cL^*(du(x)) \in T_xM$.
If $du(x) \neq 0$, then we can write down in coordinates as
\[ \Grad u
 =\sum_{i,j=1}^n g^*_{ij}(du) \frac{\del u}{\del x^j} \frac{\del}{\del x^i}. \]
We need to be careful when $du(x)=0$,
because $g^*_{ij}(du(x))$ is not defined as well as
the Legendre transform $\cL^*$ is only continuous at the zero section.
Therefore we set
\[ M_u:=\{ x \in M \,|\, du(x) \neq 0 \}. \]
For a twice differentiable function $u:M \lra \R$ and $x \in M_u$,
we define a kind of \emph{Hessian} $\Grad^2 u(x) \in T_x^*M \otimes T_xM$
by using the covariant derivative \eqref{eq:covd} as
\[ \Grad^2 u(v):=D^{\Grad u}_v (\Grad u)(x) \,\in T_xM, \qquad v \in T_xM. \]
The operator $\Grad^2 u(x)$ is symmetric in the sense that
\[ g_{\Grad u}\big( \Grad^2 u(v),w \big)=g_{\Grad u}\big( v,\Grad^2 u(w) \big) \]
for all $v,w \in T_xM$ with $x \in M_u$
(see, for example, \cite[Lemma~2.3]{OSbw}).

Define the \emph{divergence} of a differentiable vector field $V$ on $M$
with respect to the measure $\fm$ by
\[ \div_{\fm} V:=\sum_{i=1}^n \bigg( \frac{\del V^i}{\del x^i} +V^i \frac{\del \Phi}{\del x^i} \bigg),
 \qquad V=\sum_{i=1}^n V^i \frac{\del}{\del x^i}, \]
where we decomposed $\fm$ as $d\fm=\e^{\Phi} \,dx^1 dx^2 \cdots dx^n$.
One can rewrite in the weak form as
\[ \int_M \phi \div_{\fm} V \,d\fm =-\int_M d\phi(V) \,d\fm \qquad
 \text{for all}\ \phi \in \cC_c^{\infty}(M), \]
that makes sense for measurable vector fields $V$ with $F(V) \in L_{\loc}^1(M)$.
Then we define the distributional \emph{Laplacian} of $u \in H^1_{\loc}(M)$ by
$\Lap u:=\div_{\fm}(\Grad u)$ in the weak sense that
\[ \int_M \phi\Lap u \,d\fm:=-\int_M d\phi(\Grad u) \,d\fm \qquad
 \text{for all}\ \phi \in \cC_c^{\infty}(M). \]
Notice that the space $H^1_{\loc}(M)$ is defined solely in terms of the differentiable structure of $M$.
Since taking the gradient vector (more precisely, the Legendre transform)
is a nonlinear operation, our Laplacian $\Lap$ is a nonlinear operator
unless $F$ is Riemannian.

In \cite{OShf,OSbw}, we have studied the associated \emph{nonlinear heat equation}
$\del_t u=\Lap u$.
In order to recall some results in \cite{OShf},
we define the \emph{Dirichlet energy} of $u \in H_{\loc}^1(M)$ by
\[ \cE(u):=\frac{1}{2}\int_M F^2(\Grad u) \,d\fm
 =\frac{1}{2}\int_M F^*(du)^2 \,d\fm. \]
We remark that $\cE(u)<\infty$ does not necessarily imply $\cE(-u)<\infty$.
Define $H^1_0(M)$ as the closure of $\cC_c^{\infty}(M)$
with respect to the (absolutely homogeneous) norm
\[ \|u\|_{H^1}:=\|u\|_{L^2} +\{ \cE(u)+\cE(-u) \}^{1/2}. \]
Note that $(H^1_0(M),\|\cdot\|_{H^1})$ is a Banach space.

\begin{definition}[Global solutions]\label{df:hf}
We say that a function $u$ on $[0,T] \times M$, $T>0$,
is a \emph{global solution} to the heat equation $\del_t u=\Lap u$ if
it satisfies the following$:$
\begin{enumerate}[(1)]
\item $u \in L^2\big( [0,T],H^1_0(M) \big) \cap H^1 \big( [0,T],H^{-1}(M) \big)$;

\item For every $\phi \in \cC_c^{\infty}(M)$, we have
\[ \int_M \phi \cdot \del_t u_t \,d\fm =-\int_M d\phi(\Grad u_t) \,d\fm \]
for almost all $t \in [0,T]$, where we set $u_t:=u(t,\cdot)$.
\end{enumerate}
\end{definition}

We refer to \cite{Ev} for the notations as in (1).
Denoted by $H^{-1}(M)$ is the dual Banach space of $H^1_0(M)$
(so that $H^1_0(M) \subset L^2(M) \subset H^{-1}(M)$).
By noticing
\begin{align*}
\int_M |(d\phi-d\bar{\phi})(\Grad u_t)| \,d\fm
&\le \int_M \max\big\{ F^* \big( d(\phi-\bar{\phi}) \big),F^* \big( d(\bar{\phi}-\phi) \big) \big\}
 F(\Grad u_t) \,d\fm \\
&\le \{ 2\cE(\phi-\bar{\phi}) +2\cE(\bar{\phi}-\phi) \}^{1/2} \cdot \{ 2\cE(u_t) \}^{1/2},
\end{align*}
the test function $\phi$ can be taken from $H^1_0(M)$.
Global solutions can be constructed as gradient curves of the energy functional $\cE$
in the Hilbert space $L^2(M)$.
We summarize the existence and regularity properties established in \cite[\S\S 3, 4]{OShf}
in the next theorem.

\begin{theorem}\label{th:hf}
Assume $\Lambda_F<\infty$.

\begin{enumerate}[{\rm (i)}]
\item
For each initial datum $u_0 \in H^1_0(M)$ and $T>0$,
there exists a unique global solution $u$ to the heat equation on $[0,T] \times M$,
and the distributional Laplacian $\Lap u_t$ is absolutely continuous
with respect to $\fm$ for all $t \in (0,T)$.

\item
One can take the continuous version of a global solution $u$, and it enjoys
the $H^2_{\loc}$-regularity in $x$ as well as the $\cC^{1,\alpha}$-regularity for some $\alpha$
in both $t$ and $x$.
Moreover, $\del_t u$ lies in $H^1_{\loc}(M) \cap \cC(M)$,
and further in $H_0^1(M)$ if $\sS_F <\infty$.
\end{enumerate}
\end{theorem}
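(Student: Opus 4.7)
The plan is to realize the heat flow as the \emph{gradient flow} of the Dirichlet energy $\cE$ in the Hilbert space $L^2(M,\fm)$, and then to extract pointwise regularity by a PDE bootstrap. The key structural observation is that, although $\Lap$ is nonlinear, the functional $\cE(u)=\frac{1}{2}\int_M F^*(Du)^2\,d\fm$ is convex and lower semicontinuous on $L^2(M,\fm)$ with proper effective domain inside $H^1_0(M)$: convexity comes from the strong convexity of $F$, which makes $(F^*)^2/2$ a convex positively $2$-homogeneous function on each cotangent space, and lower semicontinuity follows from Fatou applied to the $L^1$ integrand $F^*(Du)^2$. Integration by parts identifies the subdifferential via $-\Lap u \in \del\cE(u)$ whenever $\Lap u$ is represented by an $L^2$-function. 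Brézis--Komura theory for maximal monotone operators then produces, for each $u_0\in H^1_0(M)$, a unique locally Lipschitz curve $u:[0,T]\to L^2(M,\fm)$ with $\del_t u_t\in -\del\cE(u_t)$ for a.e.\ $t$; standard regularization estimates put this curve in $L^2([0,T],H^1_0(M))\cap H^1([0,T],H^{-1}(M))$. Identifying the subdifferential element with the distributional Laplacian $\Lap u_t$ yields a weak solution in the sense of Definition~\ref{df:hf}, and the smoothing property of gradient flows upgrades $\fm$-absolute continuity of $\Lap u_t$ from a.e.\ $t$ to all $t\in(0,T)$.

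For the regularity claim (ii), I would localize on the open set $M_{u_t}$ where $Du_t\ne 0$. There the equation reads, in local coordinates,
\[
\del_t u \;=\; \sum_{i,j=1}^n g^{*}_{ij}(Du)\,\frac{\del^2 u}{\del x^i \del x^j} + \text{(lower-order terms)},
\]
which is a uniformly elliptic quasilinear parabolic PDE on any precompact subset, with ellipticity constants controlled by the continuity of $g^*_{ij}$ on $T^*M\setminus 0$. Standard De~Giorgi--Nash--Moser theory, followed by a Schauder and Calderón--Zygmund bootstrap, delivers $\cC^{1,\alpha}$-regularity in $(t,x)$ and $H^2_{\loc}$-regularity in $x$. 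At points where $Du_t=0$, one argues by approximation and by the fact that $(F^*)^2$ vanishes to second order at the zero section, so the quadratic structure of the equation degenerates in a benign way that is consistent with the energy formulation.

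To handle $\del_t u$, I would differentiate the equation in time. Formally $w:=\del_t u$ solves a linear parabolic equation $\del_t w = \Delta^{\Grad u} w$ built from the Riemannian-like tensor $g^*_{ij}(Du)$, i.e.\ the linearization that already appears in \eqref{eq:B}. An energy estimate obtained by testing against $w$ itself, valid on sets where $F(\Grad u)$ is bounded, yields $\del_t u \in H^1_{\loc}(M)\cap\cC(M)$. The global assertion $\del_t u\in H^1_0(M)$ under the hypothesis $\sS_F<\infty$ is the most delicate step, and I expect it to be the main obstacle. By the very definition of $\sS_F$ the anisotropic quadratic forms $g_{\Grad u}(\cdot,\cdot)$ and $F^2(\cdot)$ are uniformly comparable across $TM\setminus 0$, and Lemma~\ref{lm:rev} bounds $F(-\Grad u)$ in terms of $F(\Grad u)$; together these let one globalize the local $L^2$-estimate for $Dw$ into the symmetric norm controlling both $\cE(w)$ and $\cE(-w)$. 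The difficulty is that the nonlinearity of $\Lap$ obstructs a naive chain rule and the non-reversibility of $F$ obstructs a symmetric duality, so one has to replace these by the pointwise Bochner-type identity underlying \eqref{eq:B} to legitimize the time-differentiation and to extract the symmetric global gradient bound.
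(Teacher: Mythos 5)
The paper itself does not prove Theorem~\ref{th:hf}; it is a summary of results established in \cite[\S\S3, 4, Appendix~A]{OShf}, with only the uniqueness mechanism (Lemma~\ref{lm:uniq}) reproduced in the text. So your proposal should be measured against that cited argument. Your gradient-flow construction in $L^2(M,\fm)$ via Br\'ezis--Komura theory is indeed what \cite{OShf} does: $\cE$ is convex and lower semicontinuous, its subdifferential is identified with $-\Lap$, and uniqueness is equivalent to the $L^2$-nonexpansion (your maximal monotonicity is the abstract form of Lemma~\ref{lm:uniq}). This part matches.

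The gap is in the regularity bootstrap of part (ii). The coefficients $g^*_{ij}(Du)$ are positively $0$-homogeneous in $Du$, hence bounded and measurable, but they have an essential discontinuity on $\{Du = 0\}$. Your proposed Schauder step requires H\"older-continuous coefficients, and Calder\'on--Zygmund $W^{2,p}$-theory requires at least continuous (in fact VMO) coefficients; neither applies across the zero set. Your fallback (``$(F^*)^2$ vanishes to second order at the zero section\ldots benign degeneration'') does not repair this: the equation is locally uniformly elliptic in the sense of two-sided bounds on $g^*_{ij}(Du)$ regardless of $Du$ vanishing, so it is not the quadratic vanishing that matters, but the discontinuity. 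The actual $\cC^{1,\alpha}$-regularity is \cite[Theorem~4.9]{OShf}, proved via intrinsic-scaling/DiBenedetto-type arguments for quasilinear equations of the form $\del_t u = \div(\Phi(Du))$ with $\Phi = \tfrac{1}{2}\nabla(F^*)^2$; and $H^2_{\loc}$ is \cite[Appendix~C]{OShf}, obtained by a difference-quotient estimate exploiting the monotonicity of $\Phi$, not by Calder\'on--Zygmund. These are the key technical ingredients your proposal omits. For $\del_t u \in H^1_0(M)$ you correctly identify that $\sS_F<\infty$ transfers the linearized $g_{\Grad u}$-energy to the symmetric Finsler energy $\cE+\rev{\cE}$, but the invocation of a Bochner-type identity is not what \cite[Appendix~A]{OShf} uses; the argument there is a direct energy estimate on the differentiated equation with cutoffs, made global precisely by the $\sS_F$-comparability of the quadratic forms.
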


We remark that the usual elliptic regularity yields that
$u$ is $\cC^{\infty}$ on $\bigcup_{t>0}(\{t\} \times M_{u_t})$.
The proof of $\del_t u \in H_0^1(M)$ under $\sS_F <\infty$
can be found in \cite[Appendix~A]{OShf}.
The uniqueness in (i) is a consequence of the convexity of $F^*$ (see \cite[Proposition~3.5]{OShf}).

We finally remark that, by the construction of heat flow as the gradient flow of $\cE$,
it is readily seen that:
\begin{equation}\label{eq:u>=0}
\text{If}\ u_0 \ge 0\ \text{almost everywhere, then}\ u_t \ge 0\ \text{almost everywhere for all}\ t>0.
\end{equation}
Indeed, if $u_t<0$ on a non-null set,
then the curve $\bar{u}_t:=\max\{ u_t,0 \}$ will give a less energy
with a less $L^2$-length, a contradiction.

\subsection{Bochner--Weitzenb\"ock formula}\label{ssc:BW}

Given $f \in H^1_{\loc}(M)$ and a measurable vector field $V$
such that $V \neq 0$ almost everywhere on $M_f=\{ x \in M \,|\, df(x) \neq 0 \}$,
we can define the gradient vector field and the Laplacian
on the weighted Riemannian manifold $(M,g_V,\fm)$ by
\[ \nabla^V f:=\left\{ \begin{array}{ll}
 \displaystyle\sum_{i,j=1}^n g^{ij}(V) \frac{\del f}{\del x^j} \frac{\del}{\del x^i}
 & \text{on}\ M_f, \smallskip\\
 0 & \text{on}\ M \setminus M_f, \end{array}\right.
 \qquad \Delta\!^V f:=\div_{\fm}(\nabla^V f), \]
where the latter is in the sense of distribution.
We have $\nabla^{\Grad u}u=\Grad u$ and $\Delta\!^{\Grad u}u=\Lap u$
for $u \in H^1_{\loc}(M)$ (\cite[Lemma~2.4]{OShf}).
We also observe that, for $f_1,f_2 \in H^1_{\loc}(M)$
and $V$ such that $V \neq 0$ almost everywhere,
\begin{equation}\label{eq:f1f2}
df_2(\nabla^V f_1) =g_V(\nabla^V f_1,\nabla^V f_2) =df_1(\nabla^V f_2).
\end{equation}

We established in \cite[Theorem~3.3]{OSbw} the following key ingredient of the $\Gamma$-calculus.

\begin{theorem}[Bochner--Weitzenb\"ock formula]\label{th:BW}
Given $u \in \cC^{\infty}(M)$, we have
\begin{equation}\label{eq:BW}
\Delta\!^{\Grad u} \bigg[ \frac{F^2(\Grad u)}{2} \bigg] -d(\Lap u)(\Grad u)
 =\Ric_{\infty}(\Grad u) +\| \Grad^2 u \|_{\HS(\Grad u)}^2
\end{equation}
as well as
\[ \Delta\!^{\Grad u} \bigg[ \frac{F^2(\Grad u)}{2} \bigg] -d(\Lap u)(\Grad u)
 \ge \Ric_N(\Grad u) +\frac{(\Lap u)^2}{N} \]
for $N \in (-\infty,0) \cup [n,\infty]$ point-wise on $M_u$,
where $\|\cdot\|_{\HS(\Grad u)}$ denotes the Hilbert--Schmidt norm with respect to $g_{\Grad u}$.
\end{theorem}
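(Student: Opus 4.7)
The plan is to prove the identity \eqref{eq:BW} pointwise on $M_u$, where $\Grad u$ is a smooth non-vanishing vector field, and to reduce it to the standard weighted Riemannian Bochner--Weitzenb\"ock formula on the osculating Riemannian manifold $(M,g_{\Grad u},\fm)$ considered locally. Since both sides of the equation are naturally meaningful only on $M_u$, this restriction loses nothing; the inequality for finite $N$ will then follow from the equality by a trace Cauchy--Schwarz argument.

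Fix $x_0 \in M_u$, set $W := \Grad u$ as a smooth vector field on a neighborhood $U$ of $x_0$, and record the identifications $\nabla^W u = \Grad u$, $\Delta\!^W u = \Lap u$, $g_W(\Grad u,\Grad u) = F^2(\Grad u)$ (using Theorem~\ref{th:Euler}), and $\Grad^2 u = \Hess^{g_W} u$ on $U$. The last of these relies on the fact that the Chern connection coefficients $\Gamma^i_{jk}(W)$ in \eqref{eq:Gamma} differ from the Levi-Civita coefficients of $g_W$ only by Cartan-tensor corrections involving $N^m_j(W)$, and these corrections are killed by the symmetric contractions with the reference vector $W$ that enter the definition \eqref{eq:covd} of $\Grad^2 u$ via the identities \eqref{eq:Av}. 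Under these identifications, the Finsler left-hand side of \eqref{eq:BW} coincides at $x_0$ with the left-hand side of the weighted Riemannian Bochner formula on $(U,g_W,\fm)$ applied to $u$, whose right-hand side reads $\Ric_\infty^{g_W,\fm}(\Grad u) + \|\Hess^{g_W} u\|_{\HS(W)}^2$.

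The main obstacle is then the Ricci identification $\Ric_\infty^{g_W,\fm}(\Grad u(x_0)) = \Ric_\infty(\Grad u(x_0))$: the Finsler weighted Ricci curvature of Definition~\ref{df:wRic} is defined via a \emph{geodesic} extension $V$ of $\Grad u(x_0)/F(\Grad u(x_0))$, whereas the Riemannian Ricci of $g_W$ is built from the generically non-geodesic extension $W$. I would bridge this by computing both Ricci expressions in local coordinates at $x_0$ normalized so that $g_{ij}(W(x_0))=\delta_{ij}$, and expressing the flag curvature entirely through the Chern connection $\Gamma^i_{jk}$ evaluated with reference vector $\Grad u(x_0)$ and its horizontal derivatives. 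The flag-curvature formula uses $\Gamma^i_{jk}$ only at a single reference vector, so the discrepancy between $W$ and $V$ enters only through first derivatives of the reference vector, which are coupled to the Cartan tensor and annihilated by \eqref{eq:Av}. The weighted contribution $(\Psi\circ\eta)''(0)$ is treated analogously, writing $d\fm = \e^{-\Psi}\,d\vol_{g_W}$ locally and noting that the required second derivative of $\Psi$ along $\Grad u(x_0)$ is insensitive to whether the flow is geodesic.

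For the inequality in $N \in (-\infty,0) \cup [n,\infty]$, Cauchy--Schwarz applied to the symmetric operator $\Grad^2 u$ acting on the $n$-dimensional space $(T_{x_0}M,g_{\Grad u})$ yields $\|\Grad^2 u\|_{\HS(\Grad u)}^2 \ge (\mathrm{tr}\,\Grad^2 u)^2/n$; the trace differs from $\Lap u$ by exactly $(\Psi \circ \eta)'(0)\, F(\Grad u)$ where $\eta$ is the geodesic with $\dot\eta(0) = \Grad u/F(\Grad u)$; and the elementary inequality $a^2/n + b^2/(N-n) \ge (a+b)^2/N$, valid in the prescribed range of $N$, rearranges the already-established equality into the asserted lower bound $\Ric_N(\Grad u) + (\Lap u)^2/N$. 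The case $N=\infty$ is the equality itself.
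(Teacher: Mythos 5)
Your proposal rests on two identifications that the paper itself explicitly disavows. Remark~\ref{rm:g_u} (``$F$ versus $g_{\Grad u}$'') states precisely that $\Ric_N(\Grad u)$ \emph{may not coincide} with the weighted Ricci curvature $\Ric^{\Grad u}_N(\Grad u)$ of the weighted Riemannian manifold $(M,g_{\Grad u},\fm)$, and that the Bochner identity is instead saved by the compensating fact that $\Grad^2 u$ \emph{does not} coincide with the $g_{\Grad u}$-Hessian of $u$ in general. You assert both of these coincidences as the core of your reduction; both fail, and the equality \eqref{eq:BW} only holds because the two errors cancel against each other. Your claimed cancellation of Cartan-tensor corrections is also not correct as stated: when you contract the Chern coefficients $\Gamma^i_{jk}(\Grad u)$ in \eqref{eq:Gamma} against $(\Grad u)^k$ (the only contraction available in \eqref{eq:covd}), the terms $A_{lkm}N^m_j$ and $A_{jkm}N^m_l$ die by \eqref{eq:Av}, but $A_{jlm}N^m_k(\Grad u)^k = A_{jlm}G^m$ survives --- exactly the residual term that appears in the coordinate computation in the proof of Proposition~\ref{pr:Boc+}. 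Meanwhile the Levi--Civita Christoffel symbols of $g_{\Grad u}$ carry a \emph{different} family of Cartan terms coming from the $x$-derivative of $g_{ij}(\Grad u(x))$ through its vectorial argument, namely $(\del g_{ij}/\del v^m)(\Grad u)\cdot\del(\Grad u)^m/\del x^j$, so the two connections are not reconciled by \eqref{eq:Av} alone.

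The Ricci bridging argument has the same defect. Shen's interpretation (recalled in \S\ref{ssc:wRic}) identifies the Finsler $\Ric(v)$ with the $g_V$-Ricci curvature \emph{only for geodesic extensions} $V$ of $v$; the Riemann curvature tensor involves second derivatives of the metric, hence second derivatives of the extension, so for a non-geodesic extension like $W=\Grad u$ the discrepancy does not reduce to a first-order Cartan contraction and is not annihilated by \eqref{eq:Av}. What the actual proof in \cite{OSbw} does is compute in coordinates tied to the Chern connection without ever equating $\Grad^2 u$ to a $g_{\Grad u}$-Hessian or $\Ric_N$ to $\Ric^{\Grad u}_N$, and then verify that the mismatched pieces cancel in the combination appearing on the left of \eqref{eq:BW}. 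Your Cauchy--Schwarz step from the $N=\infty$ identity to the finite-$N$ inequality is the standard one and is fine, but only once the identity itself has been correctly established.
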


In particular, if $\Ric_N \ge K$, then we have
\begin{equation}\label{eq:Boc}
\Delta\!^{\Grad u} \bigg[ \frac{F^2(\Grad u)}{2} \bigg] -d(\Lap u)(\Grad u)
 \ge KF^2(\Grad u) +\frac{(\Lap u)^2}{N}
\end{equation}
on $M_u$, that we will call the \emph{Bochner inequality}.
One can further generalize the Bochner--Weitzenb\"ock formula
to a more general class of Hamiltonian systems
(by dropping the positive $1$-homogeneity; see \cite{Lee,Oham}).

\begin{remark}[$F$ versus $g_{\Grad u}$]\label{rm:g_u}
In contrast to $\Delta\!^{\Grad u}u=\Lap u$, $\Ric_N(\Grad u)$
may not coincide with the weighted Ricci curvature $\Ric_N^{\Grad u}(\Grad u)$
of the weighted Riemannian manifold $(M,g_{\Grad u},\fm)$.
It is compensated in \eqref{eq:BW} by the fact that $\Grad^2 u$ does not
necessarily coincide with the Hessian of $u$ with respect to $g_{\Grad u}$.
\end{remark}

The integrated form was shown in \cite[Theorem~3.6]{OSbw},
with the help of the following fact to overcome the ill-posedness of $\Grad u$ on $M \setminus M_u$
(see \cite[Exercise~10.37(iv)]{Leo}, \cite[Lemma~1.7.1]{Maz} for example).

\begin{lemma}\label{lm:BWint}
For each $f \in H^1_{\loc}(M)$, we have $df=0$ almost everywhere on $f^{-1}(0)$.
If $f \in H^1_{\loc}(M) \cap L^{\infty}_{\loc}(M)$,
then $d(f^2/2)=f \,df=0$ also holds almost everywhere on $f^{-1}(0)$.
\end{lemma}

\begin{theorem}[Integrated form]\label{th:BWint}
Assume $\Ric_N \ge K$ for some $K \in \R$ and $N \in (-\infty,0) \cup [n,\infty]$.
Given $u \in H^2_{\loc}(M) \cap \cC^1(M)$ such that $\Lap u \in H_{\loc}^1(M)$,
we have
\[ -\int_M d\phi \bigg( \nabla^{\Grad u} \bigg[ \frac{F^2(\Grad u)}{2} \bigg] \bigg) \,d\fm
 \ge \int_M \phi \bigg\{ d(\Lap u)(\Grad u) +K F^2(\Grad u) +\frac{(\Lap u)^2}{N} \bigg\} \,d\fm \]
for all bounded nonnegative functions $\phi \in H_c^1(M) \cap L^{\infty}(M)$.
\end{theorem}

Recall from Theorem~\ref{th:hf}(ii) that global solutions to the heat equation always enjoy
$u \in H_0^1(M) \cap H^2_{\loc}(M) \cap \cC^1(M)$ and $\Lap u \in H^1_{\loc}(M)$.

\section{Linearized semigroups and gradient estimates}\label{sc:heat}

In the Bochner--Weitzenb\"ock formula (Theorem~\ref{th:BW}) in the previous section,
we used the linearized Laplacian $\Delta\!^{\Grad u}$ induced from
the Riemannian structure $g_{\Grad u}$.
In the same spirit, we can consider the linearized heat equation
associated with a global solution to the heat equation.
This technique turned out useful and we have obtained gradient estimates
\`a la Bakry--\'Emery and Li--Yau in \cite[\S 4]{OSbw}.
In this section we discuss such a linearization in detail
and improve the $L^2$-gradient estimate to an $L^1$-bound (Theorem~\ref{th:L1}).

\subsection{Linearized heat semigroups and their adjoints}\label{ssc:lin}

Let $(u_t)_{t \ge 0}$ be a global solution to the heat equation.
We will fix a measurable one-parameter family of \emph{non-vanishing} vector fields
$(V_t)_{t \ge 0}$ such that $V_t=\Grad u_t$ on $M_{u_t}$ for each $t \ge 0$.
Given $f \in H^1_0(M)$ and $s \ge 0$,
let $(P_{s,t}^{\Grad u}(f))_{t \ge s}$ be the weak solution to the \emph{linearized heat equation}:
\begin{equation}\label{eq:lin-hf}
\del_t [P_{s,t}^{\Grad u}(f)] =\Delta\!^{V_t}[P_{s,t}^{\Grad u}(f)],
 \qquad P_{s,s}^{\Grad u}(f)=f.
\end{equation}
The existence and other properties of the linearized semigroup $P_{s,t}^{\Grad u}$
are summarized in the following proposition.

\begin{proposition}[Properties of linearized semigroups]\label{pr:lin}
Assume that $(M,F,\fm)$ is complete and satisfies $\sC_F<\infty$ and $\sS_F<\infty$,
and let $(u_t)_{t \ge 0}$ and $(V_t)_{t \ge 0}$ be as above.
\begin{enumerate}[{\rm (i)}]
\item
For each $s \ge 0$, $T>0$ and $f \in H^1_0(M)$, there exists a unique weak solution
$f_t=P^{\Grad u}_{s,t}(f)$, $t \in [s,s+T]$, to \eqref{eq:lin-hf}.
Moreover, $(f_t)_{t \in [s,s+T]}$ lies in $L^2([s,s+T],H^1_0(M)) \cap H^1([s,s+T],H^{-1}(M))$
as well as $\cC([s,s+T],L^2(M))$.

\item
The solution $(f_t)_{t \in [s,s+T]}$ in {\rm (i)} is H\"older continuous on $(s,s+T) \times M$.

\item
Assume that either $\fm(M)<\infty$ or $\Ric_{\infty} \ge K$ for some $K \in \R$ holds.
If $c \le f \le C$ for some $-\infty<c<C<\infty$,
then we have $c \le f_t \le C$ almost everywhere for all $t \in (s,s+T]$.
\end{enumerate}
\end{proposition}

\begin{proof}
(i)
Let $s=0$ without loss of generality.
This unique existence follows from Theorem~4.1 and Remark~4.3 in \cite[Chapter~III]{LM}
(see also \cite[Theorem~11.3]{RR},
where $A(t)$ is assumed to be continuous in $t$ but it is in fact unnecessary).
Precisely, in the notations in \cite{LM},
we take $H=L^2(M)$, $V=H^1_0(M)$, and put $A_t:=-\Delta\!^{V_t}:H^1_0(M) \lra H^{-1}(M)$.
We deduce with the help of \eqref{eq:uc} that, for any $h,\bar{h} \in H^1_0(M)$,
\[ \bigg| \int_M \bar{h} \Delta\!^{V_t} h \,d\fm \bigg|
 = \bigg| \int_M g^*_{\cL(V_t)}(dh,d\bar{h}) \,d\fm \bigg|
 \le 2 \sqrt{\cE^{V_t}(h)} \sqrt{\cE^{V_t}(\bar{h})}
 \le 2\sC_F \sqrt{\cE(h)} \sqrt{\cE(\bar{h})} \]
and
\[ -\int_M h \Delta\!^{V_t} h \,d\fm =2\cE^{V_t}(h)
 \ge \frac{2}{\sS_F} \cE(h), \]
where $\cE^{V_t}(h):=(1/2) \int_M g^*_{\cL(V_t)}(dh,dh) \,d\fm$ denotes the energy functional
on $(M,g_{V_t},\fm)$.
Since $\Lambda_F<\infty$ by $\sC_F<\infty$ (or $\sS_F<\infty$),
$\|h\|_{L^2}+\sqrt{\cE(h)}$ is comparable with $\|h\|_{H^1}$.
Therefore we have a unique solution $(f_t)_{t \in [0,T]}$ to \eqref{eq:lin-hf} with $f_0=f$
lying in $L^2([0,T],H^1_0(M)) \cap H^1([0,T],H^{-1}(M))$, and also in $\cC([0,T],L^2(M))$
(see \cite[\S 5.9.2]{Ev}, \cite[Lemma~11.4]{RR}).

(ii)
The H\"older continuity is a consequence of the local uniform ellipticity of $\Delta\!^{V_t}$
(see \cite[Proposition~4.4]{OShf}).

(iii)
This is seen for example by using the fundamental solution $q(t,x;s,y)$
to the equation $\del_t [P_{s,t}^{\Grad u}(f)] =\Delta\!^{V_t}[P_{s,t}^{\Grad u}(f)]$
(see \cite[\S 6]{Sal}).
We have
\[ f_t(x) =\int_M q(t,x;s,y) f(y) \,\fm(dy), \]
and $\int_M q(t,x;s,y) \,\fm(dy)=1$ (by $1 \in H^1_0(M)$ when $\fm(M)<\infty$,
or by \cite[\S 7]{Sal} since $\Ric_{\infty} \ge K$ implies the squared exponential volume bound
as in \cite[Theorem~4.24]{StI}).
This completes the proof.
$\qedd$
\end{proof}

The uniqueness in (i) above ensures that $u_t=P_{s,t}^{\Grad u}(u_s)$.
It follows from the non-expansion property,
\[ \frac{d}{dt}\big[ \|f_t\|_{L^2}^2 \big] =-4\cE^{V_t}(f_t) \le 0, \]
that $P^{\Grad u}_{s,t}$ uniquely extends to a linear contraction semigroup acting on $L^2(M)$.
Notice also that $f$ is $\cC^{\infty}$ on $\bigcup_{s<t<s+T} (\{t\} \times M_{u_t})$.

The operator $P_{s,t}^{\Grad u}$ is linear but not symmetric
(with respect to the $L^2$-inner product).
Let us denote by $\widehat{P}^{\Grad u}_{s,t}$ the \emph{adjoint operator}
of $P_{s,t}^{\Grad u}$.
That is to say, given $\phi \in H_0^1(M)$ and $t>0$,
we define $(\widehat{P}^{\Grad u}_{s,t}(\phi))_{s \in [0,t]}$ as the solution
to the equation
\begin{equation}\label{eq:Phat}
\del_s [\widehat{P}^{\Grad u}_{s,t}(\phi)]
 =-\Delta\!^{V_s} [\widehat{P}^{\Grad u}_{s,t}(\phi)],
 \qquad \widehat{P}^{\Grad u}_{t,t}(\phi)=\phi.
\end{equation}
Note that
\begin{equation}\label{eq:adj}
\int_M \phi \cdot P^{\Grad u}_{s,t}(f) \,d\fm
 =\int_M \widehat{P}^{\Grad u}_{s,t}(\phi) \cdot f \,d\fm
\end{equation}
indeed holds, since for $r \in (0,t-s)$
\begin{align*}
&\del_r \bigg[ \int_M \widehat{P}^{\Grad u}_{s+r,t}(\phi) \cdot
 P_{s,s+r}^{\Grad u}(f) \,d\fm \bigg] \\
&= -\int_M \Delta\!^{V_{s+r}}[\widehat{P}^{\Grad u}_{s+r,t}(\phi)] \cdot
 P_{s,s+r}^{\Grad u}(f) \,d\fm
 +\int_M \widehat{P}^{\Grad u}_{s+r,t}(\phi) \cdot
 \Delta\!^{V_{s+r}}[P_{s,s+r}^{\Grad u}(f)] \,d\fm \\
&=0.
\end{align*}
One may rewrite \eqref{eq:Phat} as
\[ \del_{\sigma} [\widehat{P}^{\Grad u}_{t-\sigma,t}(\phi)]
 =\Delta\!^{V_{t-\sigma}} [\widehat{P}^{\Grad u}_{t-\sigma,t}(\phi)],
 \qquad \sigma \in [0,t], \]
to see that the adjoint heat semigroup solves the linearized heat equation \emph{backward in time}.
(This evolution is sometimes called the \emph{conjugate heat semigroup},
especially in the Ricci flow theory; see for instance \cite[Chapter~5]{Ch+}.)
Therefore we see in the same way as $P^{\Grad u}_{s,t}$ that
$\|\widehat{P}^{\Grad u}_{t-\sigma,t}(\phi)\|_{L^2}$ is non-increasing in $\sigma$ and that
$\widehat{P}^{\Grad u}_{t-\sigma,t}$ extends to a linear contraction semigroup acting on $L^2(M)$.

\begin{remark}\label{rm:V_t}
In general, the semigroups $P^{\Grad u}_{s,t}$ and $\widehat{P}^{\Grad u}_{s,t}$
depend on the choice of an auxiliary vector field $(V_t)_{t \ge 0}$.
We will not discuss this issue, but carefully replace $V_t$ with $\Grad u_t$
as far as it is possible (with the help of Lemma~\ref{lm:BWint}).
\end{remark}

By a well known technique based on the Bochner inequality \eqref{eq:Boc} with $N=\infty$,
we obtained in \cite[Theorem~4.1]{OSbw} the \emph{$L^2$-gradient estimate} of the following form.

\begin{theorem}[$L^2$-gradient estimate, compact case]\label{th:L2}
Assume that $(M,F,\fm)$ is compact and satisfies $\Ric_{\infty} \ge K$ for some $K \in \R$.
Then, given any global solution $(u_t)_{t \ge 0}$ to the heat equation, we have
\[ F^2\big( \Grad u_t(x) \big)
 \le \e^{-2K(t-s)} P_{s,t}^{\Grad u} \big( F^2(\Grad u_s) \big) (x) \]
for all $0<s<t<\infty$ and $x \in M$.
\end{theorem}

We remark that, by Theorem~\ref{th:hf}, $F^2(\Grad u_s) \in H^1(M)$
and both sides in Theorem~\ref{th:L2} are H\"older continuous.
Let us stress that we use the nonlinear semigroup ($u_s \to u_t$) in the LHS,
while in the RHS the linearized semigroup $P^{\Grad u}_{s,t}$ is employed.

\begin{remark}\label{rm:OSbw}
In the proof of \cite[Theorem~4.1]{OSbw},
we did not distinguish $P^{\Grad u}_{s,t}$ and $\widehat{P}^{\Grad u}_{s,t}$
and treated $P^{\Grad u}_{s,t}$ as a symmetric operator.
However, the proof is valid by replacing $P^{\Grad u}_{s,t}(h)$ with
$\widehat{P}^{\Grad u}_{s,t}(h)$.
See the proof of Theorem~\ref{th:L1} below
which is based on a similar calculation (with the sharper inequality in Proposition~\ref{pr:Boc+}).
\end{remark}

\subsection{Improved Bochner inequality}\label{ssc:Boc+}

We shall give an inequality improving the Bochner inequality \eqref{eq:Boc} with $N=\infty$,
that will be used to show the $L^1$-gradient estimate as well as the isoperimetric inequality.
In the context of linear diffusion operators,
such an inequality can be derived from \eqref{eq:Boc} by a self-improvement argument
(see \cite[\S C.6]{BGL}, and also \cite{Sav} for an extension to $\RCD(K,\infty)$-spaces).
Here we give a direct proof by calculations in coordinates.

\begin{proposition}[Improved Bochner inequality]\label{pr:Boc+}
Assume $\Ric_{\infty} \ge K$ for some $K \in \R$.
Then we have, for any $u \in \cC^{\infty}(M)$,
\begin{equation}\label{eq:Boc+}
\Delta\!^{\Grad u} \bigg[ \frac{F^2(\Grad u)}{2} \bigg] -d(\Lap u)(\Grad u) -KF^2(\Grad u)
 \ge d[F(\Grad u)] \big( \nabla^{\Grad u} [F(\Grad u)] \big)
\end{equation}
point-wise on $M_u$.
\end{proposition}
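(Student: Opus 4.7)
My plan is to reduce \eqref{eq:Boc+} to a pointwise Kato-type inequality via the Bochner--Weitzenb\"ock formula \eqref{eq:BW}, and then verify the Kato inequality by a direct computation in local coordinates. By Theorem~\ref{th:BW} together with the hypothesis $\Ric_\infty(\Grad u)\ge KF^2(\Grad u)$, the left-hand side of \eqref{eq:Boc+} is bounded below by $\|\Grad^2 u\|^2_{\HS(\Grad u)}$. By \eqref{eq:f1f2} the right-hand side equals $g_{\Grad u}\bigl(\nabla^{\Grad u}[F(\Grad u)],\nabla^{\Grad u}[F(\Grad u)]\bigr)$. Hence it suffices to prove the pointwise inequality
\[
\|\Grad^2 u\|^2_{\HS(\Grad u)} \;\ge\; g_{\Grad u}\bigl(\nabla^{\Grad u}[F(\Grad u)],\nabla^{\Grad u}[F(\Grad u)]\bigr)
\qquad \text{on } M_u.
\]

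The core step is to identify $\nabla^{\Grad u}[F^2(\Grad u)/2]$ explicitly. Writing $V=\Grad u$ in a local chart and applying the chain rule to $x\mapsto g_{ij}(V(x))V^i(x)V^j(x)$, I would split the result into a horizontal contribution (the $x$-partial of $g_{ij}$ at fixed $v$) and a vertical contribution (from the dependence of $g_{ij}(V)$ on $V$). The vertical contribution is proportional to $A_{ijm}(V)V^iV^j$ and vanishes by \eqref{eq:Av}. Rewriting the remaining $\partial_k V^j$ via the Chern covariant derivative \eqref{eq:covd} requires checking that $g_{ij}(V)V^i\Gamma^j_{kl}(V)V^l = \tfrac{1}{2}\bigl(\partial_k g_{ij}\bigr)(V)V^iV^j$. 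For this, each of the three Cartan correction terms appearing in \eqref{eq:Gamma} contains, after contraction with $V^iV^l$, a Cartan factor with one index paired against $V$, so all three vanish by \eqref{eq:Av}; the identity for the formal Christoffel symbol $\gamma^j_{kl}$ of \eqref{eq:gamma} then follows from the standard symmetric cancellation $\tfrac{\partial g_{ki}}{\partial x^l}V^iV^l=\tfrac{\partial g_{kl}}{\partial x^i}V^iV^l$. Collecting, one obtains
\[
\partial_k\!\bigl[\tfrac{1}{2}F^2(\Grad u)\bigr] \;=\; g_{\Grad u}\bigl(\Grad u,\Grad^2 u(\partial_k)\bigr).
\]
Using the symmetry of $\Grad^2 u$ with respect to $g_{\Grad u}$, this rewrites as the vector identity $\nabla^{\Grad u}[F^2(\Grad u)/2]=\Grad^2 u(\Grad u)$, and hence on $M_u$
\[
\nabla^{\Grad u}[F(\Grad u)] \;=\; \frac{\Grad^2 u(\Grad u)}{F(\Grad u)}.
\]

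The Kato-type inequality then drops out of the Cauchy--Schwarz inequality for the symmetric operator $\Grad^2 u$ on the inner product space $(T_xM,g_{\Grad u})$:
\[
g_{\Grad u}\bigl(\Grad^2 u(\Grad u),\Grad^2 u(\Grad u)\bigr) \;\le\; \|\Grad^2 u\|^2_{\HS(\Grad u)}\cdot g_{\Grad u}(\Grad u,\Grad u) \;=\; \|\Grad^2 u\|^2_{\HS(\Grad u)}\,F^2(\Grad u),
\]
and dividing by $F^2(\Grad u)>0$ closes the argument.

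The main obstacle will be the coordinate computation in the middle paragraph: one has to keep careful track of the $v$-dependence of $g_{ij}(V)$ and of the nonlinear Cartan corrections in the Chern connection, both of which are specifically Finslerian phenomena absent in the Riemannian case. What makes the argument work is the identity \eqref{eq:Av}, which eliminates precisely the extra terms involving a Cartan tensor contracted against its reference vector. In this sense it is the positive $1$-homogeneity of $F$ that powers this Bochner self-improvement, without the need for the Bakry-style iteration of \eqref{eq:Boc} used in the linear $\Gamma$-calculus.
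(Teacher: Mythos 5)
Your proposal is correct and follows essentially the same route as the paper's proof: reduce via the Bochner--Weitzenb\"ock formula to a pointwise Kato-type inequality, then verify it by Cauchy--Schwarz for the $g_{\Grad u}$-symmetric operator $\Grad^2 u$ applied to $\Grad u$. The intermediate identity you isolate, $\nabla^{\Grad u}[F^2(\Grad u)/2]=\Grad^2 u(\Grad u)$, is precisely what the paper establishes implicitly by expanding both $D[F^2(\Grad u)]$ and $\Grad^2 u$ in a $g_{\Grad u}$-orthonormal coordinate and cancelling the Cartan-tensor terms via \eqref{eq:Av} and Euler's theorem; your formulation makes that step a cleaner, coordinate-free vector identity but the computation it rests on is the same.
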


\begin{proof}
By comparing \eqref{eq:Boc} with $N=\infty$ and \eqref{eq:Boc+}, it suffices to show
\begin{equation}\label{eq:Boc+'}
4F^2(\Grad u) \| \Grad^2 u \|_{\HS(\Grad u)}^2 \ge
 d[F^2(\Grad u)] \big( \nabla^{\Grad u} [F^2(\Grad u)] \big).
\end{equation}
Fix $x \in M_u$ and choose local coordinates such that $g_{ij}(\Grad u(x))=\delta_{ij}$.
We first calculate the RHS of \eqref{eq:Boc+'} at $x$ as
\begin{align*}
&d[F^2(\Grad u)] \big( \nabla^{\Grad u} [F^2(\Grad u)] \big)
 = \sum_{i=1}^n \bigg( \frac{\del[F^2(\Grad u)]}{\del x^i} \bigg)^2 \\
&= \sum_{i=1}^n \bigg( \frac{\del}{\del x^i} \bigg[ 
 \sum_{j,k=1}^n g^*_{jk}(du) \frac{\del u}{\del x^j} \frac{\del u}{\del x^k} \bigg] \bigg)^2 \\
&= \sum_{i=1}^n \bigg( 2\sum_{j=1}^n \frac{\del u}{\del x^j} \frac{\del^2 u}{\del x^i \del x^j}
 +\sum_{j,k=1}^n \frac{\del g^*_{jk}}{\del x^i}(du) \frac{\del u}{\del x^j} \frac{\del u}{\del x^k}
 +\sum_{j,k,l=1}^n \frac{\del g_{jk}^*}{\del \alpha_l}(du) \frac{\del^2 u}{\del x^i \del x^l}
 \frac{\del u}{\del x^j} \frac{\del u}{\del x^k} \bigg)^2 \\
&= \sum_{i=1}^n \bigg( 2\sum_{j=1}^n \frac{\del u}{\del x^j} \frac{\del^2 u}{\del x^i \del x^j}
 +\sum_{j,k=1}^n \frac{\del g^*_{jk}}{\del x^i}(du) \frac{\del u}{\del x^j} \frac{\del u}{\del x^k} \bigg)^2,
\end{align*}
where we used Euler's theorem (Theorem~\ref{th:Euler}, similarly to \eqref{eq:Av})
in the last equality.
Next we observe from \eqref{eq:covd} and \eqref{eq:Gamma} that, again at $x$,
\begin{align*}
&\Grad^2 u \bigg( \frac{\del}{\del x^j} \bigg)
 =D^{\Grad u}_{\del/\del x^j}(\Grad u) \\
&= \sum_{i=1}^n \bigg\{ \frac{\del}{\del x^j} \bigg[
 \sum_{k=1}^n g^*_{ik}(du) \frac{\del u}{\del x^k} \bigg]
 +\sum_{k=1}^n \Gamma_{jk}^i(\Grad u) \frac{\del u}{\del x^k} \bigg\} \frac{\del}{\del x^i} \\
&= \sum_{i=1}^n \bigg\{ \frac{\del^2 u}{\del x^j \del x^i}
 +\sum_{k=1}^n \frac{\del g^*_{ik}}{\del x^j}(du) \frac{\del u}{\del x^k}
 +\sum_{k=1}^n \gamma_{jk}^i(\Grad u) \frac{\del u}{\del x^k}
 -\sum_{l=1}^n \frac{A_{ijl}(\Grad u)}{F(\Grad u)} G^l(\Grad u) \bigg\} \frac{\del}{\del x^i} \\
&= \sum_{i=1}^n \bigg\{ \frac{\del^2 u}{\del x^i \del x^j} +\sum_{k=1}^n
 \bigg( \gamma_{jk}^i -\frac{\del g_{ik}}{\del x^j} \bigg)(\Grad u) \frac{\del u}{\del x^k}
 -\sum_{k=1}^n \frac{A_{ijk} G^k}{F}(\Grad u) \bigg\} \frac{\del}{\del x^i}.
\end{align*}
In the last line we used
\[ \frac{\del g^*_{ik}}{\del x^j}\big( du(x) \big) =-\frac{\del g_{ik}}{\del x^j}\big( \Grad u(x) \big). \]
Hence we deduce from the Cauchy--Schwarz inequality, \eqref{eq:Av} and \eqref{eq:gamma} that
\begin{align*}
&F^2(\Grad u) \| \Grad^2 u \|_{\HS(\Grad u)}^2 \\
&= \sum_{j=1}^n \bigg( \frac{\del u}{\del x^j} \bigg)^2 \cdot
 \sum_{i,j=1}^n \bigg( \frac{\del^2 u}{\del x^i \del x^j} +\sum_{k=1}^n
 \bigg( \gamma_{jk}^i -\frac{\del g_{ik}}{\del x^j} \bigg)(\Grad u) \frac{\del u}{\del x^k}
 -\sum_{k=1}^n \frac{A_{ijk} G^k}{F}(\Grad u) \bigg)^2 \\
&\ge \sum_{i=1}^n \bigg( \sum_{j=1}^n \frac{\del u}{\del x^j}
 \bigg\{ \frac{\del^2 u}{\del x^i \del x^j} +\sum_{k=1}^n
 \bigg( \gamma_{jk}^i -\frac{\del g_{ik}}{\del x^j} \bigg)(\Grad u) \frac{\del u}{\del x^k}
 -\sum_{k=1}^n \frac{A_{ijk} G^k}{F}(\Grad u) \bigg\} \bigg)^2 \\
&= \sum_{i=1}^n \bigg( \sum_{j=1}^n \frac{\del u}{\del x^j} \frac{\del^2 u}{\del x^i \del x^j}
 +\sum_{j,k=1}^n \bigg( \gamma_{jk}^i -\frac{\del g_{ik}}{\del x^j} \bigg)(\Grad u)
 \frac{\del u}{\del x^j} \frac{\del u}{\del x^k} \bigg)^2 \\
&= \sum_{i=1}^n \bigg( \sum_{j=1}^n \frac{\del u}{\del x^j} \frac{\del u^2}{\del x^i \del x^j}
 -\frac{1}{2} \sum_{j,k=1}^n \frac{\del g_{jk}}{\del x^i}(\Grad u)
 \frac{\del u}{\del x^j} \frac{\del u}{\del x^k} \bigg)^2.
\end{align*}
This completes the proof of \eqref{eq:Boc+'} as well as \eqref{eq:Boc+}.
$\qedd$
\end{proof}

The following integrated form can be shown in the same way as Theorem~\ref{th:BWint},
we refer to \cite[Theorem~3.6]{OSbw} for details.

\begin{corollary}[Integrated form]\label{cr:Boc+}
Assume $\Ric_{\infty} \ge K$ for some $K \in \R$.
Given $u \in H^2_{\loc}(M) \cap \cC^1(M)$ such that $\Lap u \in H_{\loc}^1(M)$,
we have
\begin{align*}
&-\int_M d\phi \bigg( \nabla^{\Grad u} \bigg[ \frac{F^2(\Grad u)}{2} \bigg] \bigg) \,d\fm \\
&\ge \int_M \phi \Big\{ d(\Lap u)(\Grad u) +K F^2(\Grad u)
 +d[F(\Grad u)]\big( \nabla^{\Grad u}[F(\Grad u)] \big) \Big\} \,d\fm
\end{align*}
for all bounded nonnegative functions $\phi \in H_c^1(M) \cap L^{\infty}(M)$.
\end{corollary}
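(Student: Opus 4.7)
The plan is to parallel the proof of Theorem~\ref{th:BWint} step by step, replacing the pointwise standard Bochner inequality with the improved pointwise inequality \eqref{eq:Boc+} from Proposition~\ref{pr:Boc+}. The argument splits into two stages: first establish the integrated inequality for compactly supported test functions $\phi \in H_c^1(M) \cap L^{\infty}(M)$, and then extend to general bounded nonnegative $\phi \in H^1_{\loc}(M) \cap L^{\infty}(M)$ via a partition of unity, exactly as in Theorem~\ref{th:BWint}.

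For the first stage, I would mimic \cite[Theorem~3.6]{OSbw}: the pointwise inequality \eqref{eq:Boc+} holds on $M_u$, and on $M \setminus M_u$ (where $\Grad u = 0$) every term on both sides vanishes, so the pointwise inequality extends trivially to all of $M$ once we adopt the standing convention that $\nabla^{\Grad u}[\,\cdot\,]$ is set to zero off $M_u$. Integrating against a nonnegative $\phi \in H_c^1(M) \cap L^{\infty}(M)$ and applying the weak-form definition
\[
\int_M \phi \cdot \Delta\!^{\Grad u}\!\bigg[\frac{F^2(\Grad u)}{2}\bigg] d\fm
 = -\int_M D\phi\bigg(\nabla^{\Grad u}\!\bigg[\frac{F^2(\Grad u)}{2}\bigg]\bigg) d\fm
\]
yields the desired inequality with compact support. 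The assumptions $u \in H^2_{\loc}(M) \cap \cC^1(M)$ and $\Lap u \in H^1_0(M)$ guarantee that each integrand is locally integrable; in particular the new term $D[F(\Grad u)](\nabla^{\Grad u}[F(\Grad u)]) = g_{\Grad u}(\nabla^{\Grad u}[F(\Grad u)],\nabla^{\Grad u}[F(\Grad u)]) \ge 0$ is a nonnegative function whose local integrability follows from $u \in H^2_{\loc}(M)$ on the open set $M_u$.

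For the second stage, I would take a $\cC^{\infty}_c$-partition of unity $\{h_i\}_{i \in \N}$ and decompose $\phi = \sum_i h_i \phi$, so that each $h_i \phi \in H_c^1(M) \cap L^{\infty}(M)$ and the first stage applies to it. The RHS of the asserted inequality, summed over $i$, is well-defined (possibly $+\infty$): the term with $D[\Lap u](\Grad u)$ is absolutely integrable by Cauchy--Schwarz using $\Lap u \in H_0^1(M)$ and $u \in H_0^1(M)$; the term $K F^2(\Grad u) \phi$ is bounded in $L^1$ by $2|K|\cdot\|\phi\|_{L^{\infty}} \cdot \cE(u)$; and the improved term is nonnegative, hence its sum is unambiguously defined in $[0,\infty]$. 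The well-definedness of the LHS (in the sense of Remark~\ref{rm:unity}) then follows by exactly the same bookkeeping argument used in the proof of Theorem~\ref{th:BWint}: write $\phi$ and $\|\phi\|_{L^{\infty}} - \phi$ separately, control the negative parts of the local contributions by the now-finite RHS terms, and observe that the resulting sum is independent of the chosen partition of unity by a double-sum rearrangement.

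The main obstacle is controlling the new nonnegative term $D[F(\Grad u)](\nabla^{\Grad u}[F(\Grad u)])$, because $F$ is only $\cC^0$ (not $\cC^1$) at the zero section and so $F(\Grad u)$ need not lie in $H^1_{\loc}$ globally. However, on $M_u$ the function $F(\Grad u)$ is smooth, and outside $M_u$ it vanishes identically, so the product $\phi \cdot D[F(\Grad u)](\nabla^{\Grad u}[F(\Grad u)])$ makes sense as a nonnegative measurable function (set to zero on $M \setminus M_u$). Its nonnegativity is in fact what makes the partition-of-unity rearrangement succeed: one only needs to absorb finite-signed terms against finite-signed terms, while the improved term merely contributes monotonically to both sides. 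Once this is verified, the proof concludes exactly as in Theorem~\ref{th:BWint}.
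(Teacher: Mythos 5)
Your proposal is correct and follows essentially the same approach as the paper: establish the compactly supported case by mimicking the integrated Bochner inequality of \cite{OSbw} using the vanishing of $D[F(\Grad u)]$ a.e.\ on $M\setminus M_u$, and then pass to general bounded nonnegative $\phi$ via a partition of unity and the bookkeeping of Theorem~\ref{th:BWint}. The one place where the paper is more explicit is the finiteness of the new term: rather than leaving $\sum_i \int_M (h_i\phi)\, D[F(\Grad u)](\nabla^{\Grad u}[F(\Grad u)])\,d\fm$ as merely ``unambiguously defined in $[0,\infty]$'', the paper sandwiches it between $0$ and the already-known-to-be-finite quantity $-\sum_i\int_M\{D[h_i\phi](\nabla^{\Grad u}[F^2(\Grad u)/2]) + h_i\phi\{D[\Lap u](\Grad u)+KF^2(\Grad u)\}\}\,d\fm$, concluding finiteness rather than just measurability; this is what makes the summed inequality nontrivial. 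Your argument implicitly contains this (you invoke ``the now-finite RHS terms''), but it is worth spelling out that the finiteness of the improved term is itself a consequence of the compactly supported inequality plus the finiteness from Theorem~\ref{th:BWint}, not an independent hypothesis.
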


\subsection{$L^1$-gradient estimate}\label{ssc:L1}

The improved Bochner inequality \eqref{eq:Boc+} yields the following
\emph{$L^1$-gradient estimate}, under a technical (likely redundant) assumption that
$d[F(\Grad u_t)](\nabla^{\Grad u_t}[F(\Grad u_t)]) \in L^1(M)$ for all $t>0$,
which holds in the compact case thanks to the $H^2_{\loc}$-regularity
(recall Theorem~\ref{th:hf}).

\if0

We will make use of the heat semigroup $P^{\Grad u}_t$
associated with the linearized Laplacian $\Delta\!^{\Grad u}$,
namely $P^{\Grad u}_t(f)$ is the solution to
\[ \del_t[P^{\Grad u}_t(f)]=\Delta\!^{\Grad u}[P^{\Grad u}_t(f)],
 \qquad P^{\Grad u}_0(f)=f. \]
Notice that $P^{\Grad u}_t$ is linear and symmetric by construction.

\begin{lemma}\label{lm:F(gu)}
Assume $\Lambda_F<\infty$, $\Ric_N \ge K$ for some $N \in (-\infty,0) \cup [n,\infty]$ and $K \in \R$,
and the completeness of $(M,F)$.
Let $u \in H^1_0(M) \cap H^2_{\loc}(M) \cap \cC^1(M)$ such that $\Lap u \in H^1_0(M)$
and $F(\Grad u) \in L^{\infty}(M)$.
\begin{enumerate}[{\rm (i)}]
\item
We have
\begin{equation}\label{eq:BWint+}
\int_M \Delta\!^{\Grad u}\phi \cdot \frac{F^2(\Grad u)}{2} \,d\fm
 \ge \int_M \phi \bigg\{ d(\Lap u)(\Grad u) +K F^2(\Grad u) +\frac{(\Lap u)^2}{N} \bigg\} \,d\fm
\end{equation}
for all nonnegative functions $\phi \in H_0^1(M) \cap L^{\infty}(M)$
with $\Delta\!^{\Grad u}\phi \in L^{\infty}(M)$.

\item
We have $d[F^2(\Grad u)](\nabla^{\Grad u}[F^2(\Grad u)]) \in L^1(M)$.
\end{enumerate}
\end{lemma}

\begin{proof}
(i)
We first assume that $u \in \cC^{\infty}(M)$, then we have \eqref{eq:Boc+} on $M_u$
and hence
\[ \int_M \phi \cdot \Delta\!^{\Grad u} \bigg[ \frac{F^2(\Grad u)}{2} \bigg] \,d\fm
 \ge \int_M \phi \bigg\{ d(\Lap u)(\Grad u) +K F^2(\Grad u) +\frac{(\Lap u)^2}{N} \bigg\} \,d\fm. \]
It follows from the symmetry of $P^{\Grad u}_t$ that
\begin{align*}
\int_M \Delta\!^{\Grad u}\phi \cdot \frac{F^2(\Grad u)}{2} \,d\fm
&= \lim_{t \to 0} \frac{1}{2} \int_M \frac{P^{\Grad u}_t(\phi)-\phi}{t} \cdot F^2(\Grad u) \,d\fm \\
&=  \lim_{t \to 0} \frac{1}{2}
 \int_M \phi \cdot \frac{P^{\Grad u}_t (F^2(\Grad u)) -F^2(\Grad u)}{t} \,d\fm \\
& {\color{blue} = \int_M \phi \cdot \Delta\!^{\Grad u}  \bigg[ \frac{F^2(\Grad u)}{2} \bigg] \,d\fm.}
\end{align*}
This shows the claim for $u \in \cC^{\infty}(M)$.

For general $u$, let us first observe that the RHS of \eqref{eq:BWint+} is rewritten as
\begin{align*}
&\int_M \bigg\{ \Lap u \cdot \div_{\fm}(\phi \Grad u)
 +\phi \bigg( KF^2(u) +\frac{(\Lap u)^2}{N} \bigg) \bigg\} \,d\fm \\
&= \int_M \bigg\{ \phi (\Lap u)^2 + \Lap u \cdot d\phi(\Grad u)
 +\phi \bigg( KF^2(u) +\frac{(\Lap u)^2}{N} \bigg) \bigg\} \,d\fm.
\end{align*}
Thanks to the $H^2_{\loc}$-regularity,
we approximate $u$ by $\cC^{\infty}$-functions,
then applying the former claim of the proof completes the proof.

(ii)
This is shown along the lines of \cite[Lemmas~2.6, 3.2]{Sav}.
We first consider the mollification of $f:=F^2(\Grad u)/2$
using the heat flow associated with $\Delta\!^{\Grad u}$,
\[ f^{\ve} :=\frac{1}{\ve} \int_0^{\infty} P^{\Grad u}_r(f) \kappa\bigg( \frac{r}{\ve} \bigg) \,dr, \]
where $\kappa \in \cC_c^{\infty}((0,\infty))$ is a nonnegative function with $\int_0^{\infty} \kappa \,dr=1$.
Notice that the unique existence of the solution $(P^{\Grad u}_r(f))_{r \ge 0}$
is obtained in the same as Theorem~\ref{th:hf}(i).
Observe also that
\[ \Delta\!^{\Grad u}(f^{\ve})
 =\frac{1}{\ve} \int_0^{\infty} \del_r[P^{\Grad u}_r(f)] \kappa\bigg( \frac{r}{\ve} \bigg) \,dr
 =-\frac{1}{\ve^2} \int_0^{\infty} P^{\Grad u}_r(f) \kappa'\bigg( \frac{r}{\ve} \bigg) \,dr \]
is well-defined and belongs to $L^{\infty}(M)$.

Then, given any nonnegative test function $\phi \in H^1_0(M) \cap L^{\infty}(M)$,
we have
\[ \int_M P^{\Grad u}_r(f) \phi \,d\fm =\int_M f P^{\Grad u}_r(\phi) \,d\fm \]
and hence, by (i),
\begin{align*}
\int_M \Delta\!^{\Grad u}(f^{\ve}) \cdot \phi \,d\fm
&=\int_M f \cdot \Delta\!^{\Grad u}(\phi^{\ve}) \,d\fm \\
&\ge \int_M \phi^{\ve} \bigg\{ d(\Lap u)(\Grad u) +K F^2(\Grad u) +\frac{(\Lap u)^2}{N} \bigg\} \,d\fm.
\end{align*}
Choosing $\phi=f^{\ve}$ and then letting $\ve \to 0$ shows
\[ -\int_M df(\nabla^{\Grad u}f) \,d\fm \ge
  \int_M f \bigg\{ d(\Lap u)(\Grad u) +K F^2(\Grad u) +\frac{(\Lap u)^2}{N} \bigg\} \,d\fm. \]
This completes the proof.
$\qedd$
\end{proof}

\begin{corollary}
Assume $\Ric_{\infty} \ge K$ for some $K \in \R$.
Then, given $u \in H^1_0(M) \cap H^2_{\loc}(M) \cap \cC^1(M)$ such that $\Lap u \in H^1_0(M)$,
we have
\[ \int_M d[F(\Grad u)] \big( \nabla^{\Grad u} [F(\Grad u)] \big) \,d\fm
 \le \|\Lap u\|_{L^2}^2 -2K\cE(u). \]
\end{corollary}

\begin{proof}
We put $\xi:=F(\Grad u)$ for simplicity, and observe from \eqref{eq:Boc+'} that, on $M_u$,
\[ d\xi (\nabla^{\Grad u} \xi)
 =\frac{1}{4F^2(\Grad u)} d[F^2(\Grad u)]\big( \nabla^{\Grad u}[F^2\Grad u)] \big)
 \le \|\Grad^2 u\|_{\HS(\Grad u)}^2. \]
Now, it follows from \eqref{eq:BW} and Lemma~\ref{lm:BWint} that,
for any $f \in \cC_c^{\infty}(M)$,
\begin{align*}
\int_M \|\Grad^2 f\|_{\HS(\Grad f)}^2 \,d\fm
&\le \int_M \bigg\{ \Delta\!^{\Grad f} \bigg[ \frac{F^2(\Grad f)}{2} \bigg]
 -d(\Lap f)(\Grad f) -KF^2(\Grad f) \bigg\} \,d\fm \\
&= \|\Lap f\|_{L^2}^2 -2K \cE(f).
\end{align*}
Therefore, by approximating $u$ with $f \in \cC_c^{\infty}(M)$, we obtain
\[ \int_M d\xi (\nabla^{\Grad u} \xi) \,d\fm \le \|\Lap u\|_{L^2}^2 -2K \cE(u). \]
$\qedd$
\end{proof}

\fi

\begin{theorem}[$L^1$-gradient estimate]\label{th:L1}
Let $(M,F,\fm)$ be complete and satisfy $\Ric_{\infty} \ge K$, $\sC_F<\infty$ and $\sS_F<\infty$,
and $(u_t)_{t \ge 0}$ be a global solution to the heat equation with $u_0 \in \cC^{\infty}_c(M)$.
We further assume that
\begin{equation}\label{eq:hypo}
d[F(\Grad u_t)] \big( \nabla^{\Grad u_t} [F(\Grad u_t)] \big) \in L^1(M)
\end{equation}
for all $t>0$.
Then we have
\[ F\big( \Grad u_t (x) \big)
 \le \e^{-K(t-s)} P_{s,t}^{\Grad u} \big( F(\Grad u_s) \big)(x) \]
for all $0 \le s<t<\infty$ and $x \in M$.
\end{theorem}

\begin{proof}
Notice first that $F(\Grad u_0) \in H^1_0(M) \cap L^{\infty}(M)$ since $u_0 \in \cC^{\infty}_c(M)$.
Fix arbitrary $\ve>0$ and let us consider the function
\[ \xi_{\sigma}:=\sqrt{\e^{-2K\sigma} F^2(\Grad u_{t-\sigma}) +\ve},
 \qquad 0 \le \sigma \le t-s. \]
Note from the proof of \cite[Theorem~4.1]{OSbw} that
\begin{equation}\label{eq:delF}
\frac{\del}{\del \sigma} \bigg[ \frac{F^2(\Grad u_{t-\sigma})}{2} \bigg]
 = -\frac{\del}{\del t} \bigg[ \frac{F^2(\Grad u_{t-\sigma})}{2} \bigg]
 =-d(\Lap u_{t-\sigma})(\Grad u_{t-\sigma}).
\end{equation}
Hence we have, on one hand,
\[ \del_{\sigma} \xi_{\sigma} =-\frac{\e^{-2K\sigma}}{\xi_{\sigma}}
 \big\{ KF^2(\Grad u_{t-\sigma}) +d(\Lap u_{t-\sigma})(\Grad u_{t-\sigma}) \big\}. \]
On the other hand, for any nonnegative function $\phi \in \cC^{\infty}_c(M)$, we observe
\begin{align*}
&\int_M d\phi(\nabla^{\Grad u_{t-\sigma}} \xi_{\sigma}) \,d\fm
 =\int_M \frac{\e^{-2K\sigma}}{\xi_{\sigma}}
 d\phi\bigg( \nabla^{\Grad u_{t-\sigma}} \bigg[ \frac{F^2(\Grad u_{t-\sigma})}{2} \bigg] \bigg) \,d\fm \\
&=\int_M \bigg( d\bigg( \phi \frac{\e^{-2K\sigma}}{\xi_{\sigma}} \bigg)
 +\phi \frac{\e^{-2K\sigma}}{\xi_{\sigma}^2} d\xi_{\sigma} \bigg)
 \bigg( \nabla^{\Grad u_{t-\sigma}} \bigg[ \frac{F^2(\Grad u_{t-\sigma})}{2} \bigg] \bigg) \,d\fm \\
&= \int_M d\bigg( \phi \frac{\e^{-2K\sigma}}{\xi_{\sigma}} \bigg)
 \bigg( \nabla^{\Grad u_{t-\sigma}} \bigg[ \frac{F^2(\Grad u_{t-\sigma})}{2} \bigg] \bigg) \,d\fm \\
&\quad +\int_M \phi \frac{\e^{-4K\sigma}}{\xi_{\sigma}^3}
 d\bigg[ \frac{F^2(\Grad u_{t-\sigma})}{2} \bigg]
 \bigg( \nabla^{\Grad u_{t-\sigma}} \bigg[ \frac{F^2(\Grad u_{t-\sigma})}{2} \bigg] \bigg) \,d\fm \\
&\le \int_M d\bigg( \phi \frac{\e^{-2K\sigma}}{\xi_{\sigma}} \bigg)
 \bigg( \nabla^{\Grad u_{t-\sigma}} \bigg[ \frac{F^2(\Grad u_{t-\sigma})}{2} \bigg] \bigg) \,d\fm \\
&\quad +\int_M \phi \frac{\e^{-2K\sigma}}{\xi_{\sigma}}
 d[F(\Grad u_{t-\sigma})]\big( \nabla^{\Grad u_{t-\sigma}}[F(\Grad u_{t-\sigma})] \big) \,d\fm,
\end{align*}
where we used $F^2(\Grad u_{t-\sigma}) \le \e^{2K\sigma} \xi_{\sigma}^2$ in the last inequality.
Therefore the improved Bochner inequality (Corollary~\ref{cr:Boc+}) shows that
\begin{equation}\label{eq:3.2.7}
\Delta\!^{\Grad u_{t-\sigma}} \xi_{\sigma} +\del_{\sigma} \xi_{\sigma} \ge 0
\end{equation}
in the weak sense.
Notice that the test function $\phi$ can be in fact taken from
$H^1_0(M) \cap L^{\infty}(M)$ thanks to the hypothesis \eqref{eq:hypo}
and $\xi_{\sigma} \ge \sqrt{\ve}$.

For a nonnegative function $\varphi \in \cC^{\infty}_c(M)$ and $\sigma \in (0,t-s)$, set
\[ \Phi(\sigma):=\int_M \varphi \cdot P^{\Grad u}_{t-\sigma,t}(\xi_{\sigma}) \,d\fm
 =\int_M \widehat{P}^{\Grad u}_{t-\sigma,t}(\varphi) \cdot \xi_{\sigma} \,d\fm. \]
We deduce from \eqref{eq:Phat} and \eqref{eq:f1f2} that
\begin{align*}
\Phi'(\sigma) &=
 \int_M \widehat{P}^{\Grad u}_{t-\sigma,t}(\varphi) \cdot \del_{\sigma} \xi_{\sigma} \,d\fm
 -\int_M d\xi_\sigma
 \big( \nabla^{\Grad u_{t-\sigma}} \big[ \widehat{P}^{\Grad u}_{t-\sigma,t}(\varphi) \big] \big)
 \,d\fm \\
&= \int_M \widehat{P}^{\Grad u}_{t-\sigma,t}(\varphi) \cdot \del_{\sigma} \xi_{\sigma} \,d\fm
 -\int_M d[\widehat{P}^{\Grad u}_{t-\sigma,t}(\varphi)]
 (\nabla^{\Grad u_{t-\sigma}} \xi_{\sigma}) \,d\fm.
\end{align*}
Therefore we can apply \eqref{eq:3.2.7} with the test function
$\widehat{P}^{\Grad u}_{t-\sigma,t}(\varphi)$ (thanks to Proposition~\ref{pr:lin})
to obtain $\Phi'(\sigma) \ge 0$.
This implies
\[ \int_M \varphi \cdot \xi_0 \,d\fm \le \int_M \varphi \cdot P^{\Grad u}_{s,t}(\xi_{t-s}) \,d\fm. \]
By the arbitrariness of $\varphi$ and $\ve$, we have
\[ F(\Grad u_t) \le \e^{-K(t-s)} P^{\Grad u}_{s,t}\big( F(\Grad u_s) \big) \]
almost everywhere.
Since both sides are H\"older continuous (Proposition~\ref{pr:lin}(ii)),
this completes the proof.
%
\if0
%
For nonnegative functions $\varphi,\psi \in \cC^{\infty}_c(M)$ and $\sigma \in (0,t-s)$, set
\[ \Phi(\sigma):=\int_M \varphi \cdot P^{\Grad u}_{t-\sigma,t}(\psi \xi_{\sigma}) \,d\fm
 =\int_M \widehat{P}^{\Grad u}_{t-\sigma,t}(\varphi) \cdot \psi \xi_{\sigma} \,d\fm. \]
We deduce from \eqref{eq:Phat} and \eqref{eq:f1f2} that
\begin{align*}
\Phi'(\sigma) &=
 \int_M \psi \widehat{P}^{\Grad u}_{t-\sigma,t}(\varphi) \cdot \del_{\sigma} \xi_{\sigma} \,d\fm
 -\int_M d(\psi \xi_\sigma)
 \big( \nabla^{V_{t-\sigma}} \big[ \widehat{P}^{\Grad u}_{t-\sigma,t}(\varphi) \big] \big) \,d\fm \\
&= \int_M \psi \widehat{P}^{\Grad u}_{t-\sigma,t}(\varphi) \cdot \del_{\sigma} \xi_{\sigma} \,d\fm
 -\int_M d\big( \psi \widehat{P}^{\Grad u}_{t-\sigma,t}(\varphi) \big)
 (\nabla^{\Grad u_{t-\sigma}} \xi_{\sigma}) \,d\fm \\
&\quad +\int_M \Big\{ \widehat{P}^{\Grad u}_{t-\sigma,t}(\varphi)
 d\psi(\nabla^{\Grad u_{t-\sigma}} \xi_{\sigma})
 -\xi_{\sigma} d\psi
 \big( \nabla^{V_{t-\sigma}} \big[ \widehat{P}^{\Grad u}_{t-\sigma,t}(\varphi) \big] \big) \Big\} \,d\fm.
\end{align*}
Hence, by \eqref{eq:3.2.7} with the test function $\psi \widehat{P}^{\Grad u}_{t-\sigma,t}(\varphi)$,
\[ \Phi'(\sigma) \ge
 \int_M \Big\{ \widehat{P}^{\Grad u}_{t-\sigma,t}(\varphi)
 d\psi(\nabla^{\Grad u_{t-\sigma}} \xi_{\sigma})
 -\xi_{\sigma} d\psi
 \big( \nabla^{V_{t-\sigma}} \big[ \widehat{P}^{\Grad u}_{t-\sigma,t}(\varphi) \big] \big) \Big\} \,d\fm. \]
Therefore we find
\begin{align}
&\Phi(t-s)-\Phi(0) \nonumber\\
&\ge \int_0^{t-s} \int_M \Big\{ \widehat{P}^{\Grad u}_{t-\sigma,t}(\varphi)
 d\psi(\nabla^{\Grad u_{t-\sigma}} \xi_{\sigma}) -\xi_{\sigma} d\psi
 \big( \nabla^{V_{t-\sigma}} \big[ \widehat{P}^{\Grad u}_{t-\sigma,t}(\varphi) \big] \big) \Big\}
 \,d\fm \,d\sigma. \label{eq:Phi'}
\end{align}

If $M$ is compact, then we can take $\psi \equiv 1$ and obtain $\Phi'(\sigma) \ge 0$, therefore
\begin{equation}\label{eq:L1-}
\int_M \varphi \cdot P^{\Grad u}_{s,t}(\xi_{t-s}) \,d\fm \ge \int_M \varphi \cdot \xi_0 \,d\fm.
\end{equation}
In the noncompact case, we are going to apply the inequality \eqref{eq:Phi'}
to a sequence of cut-off functions $\psi_k \in \cC_c^{\infty}(M)$, $k \in \N$,
with $\psi_k \uparrow 1$ monotonically and $\|F^*(d\psi_k)\|_{L^{\infty}} \to 0$, and pass to the limit.
We remark that such a sequence $\{\psi_k\}_{k \in \N}$ exists due to the completeness.
To this end, since $\widehat{P}^{\Grad u}_{t-\sigma,t}(\varphi) \in L^2([0,t-s],H^1_0(M))$
by Proposition~\ref{pr:lin}, it suffices to have an estimate of
$F(\nabla^{\Grad u_{t-\sigma}} \xi_{\sigma})$.
We observe
\[ F^2(\nabla^{\Grad u_{t-\sigma}} \xi_{\sigma}) \le \sC_F g_{\Grad u_{t-\sigma}}
 (\nabla^{\Grad u_{t-\sigma}} \xi_{\sigma},\nabla^{\Grad u_{t-\sigma}} \xi_{\sigma})
 =\sC_F d\xi_{\sigma} (\nabla^{\Grad u_{t-\sigma}} \xi_{\sigma}) \]
and
\begin{align*}
d\xi_{\sigma} (\nabla^{\Grad u_{t-\sigma}} \xi_{\sigma})
&=\frac{\e^{-4K\sigma}}{4\xi_{\sigma}^2}
 d[F^2(\Grad u_{t-\sigma})]\big( \nabla^{\Grad u_{t-\sigma}}[F^2(\Grad u_{t-\sigma})] \big) \\
&\le \e^{-2K\sigma}
 d[F(\Grad u_{t-\sigma})]\big( \nabla^{\Grad u_{t-\sigma}}[F(\Grad u_{t-\sigma})] \big).
\end{align*}
The RHS is in $L^1([0,t-s] \times M)$ by the hypothesis \eqref{eq:hypo},
and hence we can pass to the limit of \eqref{eq:Phi'}
applied to $\psi_k$ described above, implying \eqref{eq:L1-}.
By the arbitrariness of $\varphi$ and $\ve$, we have
\[ \e^{-K(t-s)} P^{\Grad u}_{s,t}\big( F(\Grad u_s) \big) \ge F(\Grad u_t) \]
almost everywhere.
Since both sides are H\"older continuous (Proposition~\ref{pr:lin}(ii)),
this completes the proof.
\fi
$\qedd$
\end{proof}

It is a standard fact that the $L^1$-gradient estimate implies the $L^2$-bound.

\begin{corollary}[$L^2$-gradient estimate, noncompact case]\label{cr:L2}
Let $(M,F,\fm)$ be complete and satisfy $\Ric_{\infty} \ge K$, $\sC_F<\infty$ and $\sS_F<\infty$,
and $(u_t)_{t \ge 0}$ be a global solution to the heat equation with $u_0 \in \cC^{\infty}_c(M)$
and satisfying \eqref{eq:hypo} for all $t>0$.
Then we have
\[ F^2\big( \Grad u_t (x) \big)
 \le \e^{-2K(t-s)} P_{s,t}^{\Grad u} \big( F^2(\Grad u_s) \big)(x) \]
for all $0 \le s<t<\infty$ and $x \in M$.
\end{corollary}

\begin{proof}
This is a consequence of a kind of Jensen's inequality:
\[ P^{\Grad u}_{s,t}(f)^2 \le P^{\Grad u}_{s,t}(f^2) \]
for $f \in L^2(M) \cap L^{\infty}(M)$.
For $\psi \in \cC^{\infty}_c(M)$ with $0 \le \psi \le 1$ and $r \in \R$, we have
\begin{align*}
0 &\le P^{\Grad u}_{s,t}\big( (rf+\psi)^2 \big)
 =r^2 P^{\Grad u}_{s,t}(f^2) +2r P^{\Grad u}_{s,t}(f\psi) +P^{\Grad u}_{s,t}(\psi^2) \\
&\le r^2 P^{\Grad u}_{s,t}(f^2) +2r P^{\Grad u}_{s,t}(f\psi) +1.
\end{align*}
Letting $f\psi \to f$ in $L^2(M)$, we find
$r^2 P^{\Grad u}_{s,t}(f^2) +2r P^{\Grad u}_{s,t}(f) +1 \ge 0$ for all $r \in \R$.
Hence $P^{\Grad u}_{s,t}(f)^2 -P^{\Grad u}_{s,t}(f^2) \le 0$
as desired.
$\qedd$
\end{proof}

\subsection{On the hypothesis \eqref{eq:hypo}}\label{ssc:hypo}

The hypothesis \eqref{eq:hypo} seems redundant and indeed unnecessary
for weighted Riemannian manifolds and $\RCD$-spaces.
Especially, when $K>0$, the Gaussian decay of the measure (\cite[Theorem~4.26]{StI})
could imply \eqref{eq:hypo}.
Let us give some more comments on \eqref{eq:hypo}.

\subsubsection{Weighted Riemannian case}

We essentially followed the proof of \cite[Theorem~3.2.4]{BGL} in Theorem~\ref{th:L1}.
Then we have
\[ d\xi_{\sigma} (\nabla^{\Grad u_{t-\sigma}} \xi_{\sigma})
 \le \e^{-2K\sigma}
 d[F(\Grad u_{t-\sigma})]\big( \nabla^{\Grad u_{t-\sigma}}[F(\Grad u_{t-\sigma})] \big), \]
and the improved Bochner inequality (Proposition~\ref{pr:Boc+}) implies
\[ \int_M d[F(\Grad u)] \big( \nabla^{\Grad u} [F(\Grad u)] \big) \,d\fm
 \le \|\Lap u\|_{L^2}^2 +|K| \cdot \|u\|_{L^2} \|\Lap u\|_{L^2} \]
for $u \in \cC^{\infty}_c(M)$.
Now in \cite{BGL}, for a linear operator $\mathrm{L}$,
we make use of the density of $\mathcal{A}_0=\cC^{\infty}_c(M)$
in the domain $\mathcal{D}(\mathrm{L})$ with respect to the norm
\[ \|f\|_{\mathcal{D}(\mathrm{L})} :=\big( \|f\|_{L^2}^2 +\|\mathrm{L}f\|_{L^2}^2 \big)^{1/2} \]
to extend the above estimate to $\mathcal{D}(\mathrm{L})$.
This density is a consequence of the \emph{hypo-ellipticity} (see \cite[Proposition~3.2.1]{BGL}),
which is defined by the property that any solution to $\mathrm{L}^*f=\lambda f$ is smooth
(see also \cite[Definition~3.3.8]{BGL}, typically $\mathcal{A}=\cC^{\infty}(M)$).
This is not the case for operators with nonsmooth coefficients,
thereby it is unclear if we can apply this method in the Finsler case
(to the linearized Laplacian $\Delta\!^{\Grad u}$).

\if0
Precisely, $\mathrm{L}^*$ is the adjoint of $\mathrm{L}$ defined by
\[ \int_M \phi \cdot \mathrm{L}^*f \,d\fm =\int_M f \cdot \mathrm{L} \phi \,d\fm \]
for all $\phi \in \cC^{\infty}_c(M)$, where $f,\mathrm{L}^*f \in L^2(M)$,
and $\mathcal{D}(\mathrm{L}^*) \supset \mathcal{D}(\mathrm{L})$.
Then the above density of $\cC^{\infty}(M)$ in $\mathcal{D}(\mathrm{L})$
($\cC^{\infty}_c(M)$ is a \emph{core} of $\mathcal{D}(\mathrm{L})$)
is equivalent to the \emph{essential self-adjointness}:
$\mathcal{D}(\mathrm{L}^*)=\mathcal{D}(\mathrm{L})$ (\cite[\S 3.1.8]{BGL}).

In order to follow the proof of \cite[Proposition~3.2.1]{BGL},
it is in fact enough to show that: any solution to $\mathrm{L}^*f=f$ is zero
for $\mathrm{L}=\Delta\!^{\Grad u}$.
Namely, $f \in L^2(M)$ satisfying
\[ \int_M f \cdot \Delta\!^{\Grad u} \phi \,d\fm =\int_M f \phi \,d\fm \]
for all $\phi \in \cC^{\infty}_c(M)$ is zero.
Note that $\Delta\!^{\Grad u}$ is a nonpositive operator and hence
the consequence is clear if $f \in \mathcal{D}(\mathrm{L})$,
but a priori $f \in \mathcal{D}(\mathrm{L}^*)$.
\fi

\subsubsection{$\RCD$-case}

In $\RCD(K,\infty)$-spaces, we obtain the \emph{Wasserstein contraction estimate} of heat flow
by the convexity of the relative entropy,
and then the gradient estimates follow by the duality argument.
Moreover, we can obtain the Bochner inequality by differentiating the gradient estimate
(see \cite{AGSrcd,AGSboc,GKO,Sav} for details).

This method could avoid the use of the functional analytic argument
involving $\mathcal{A}_0$ and $\mathcal{A}$,
and what is important and interesting here is that
the Bochner inequality derived from the gradient estimate is of the form:
\[ \int \Delta \phi \cdot \frac{|\nabla u|^2}{2} \,d\fm
 \ge \int \phi \big\{ d(\Delta u)(\nabla u) +K|\nabla u|^2 \big\} \,d\fm \]
for $u \in \mathcal{D}(\Delta)$ with $\Delta u \in H^1$
and $\phi \in \mathcal{D}(\Delta) \cap L^{\infty}$ with $\Delta \phi \in L^{\infty}$.
In the LHS, what we have directly from the point-wise Bochner inequality is
\[  \int \phi \cdot \Delta \bigg[ \frac{|\nabla u|^2}{2} \bigg] \,d\fm, \]
and modifying this into the above LHS requires an approximation of $\phi$
by functions $\phi_k$ in $\cC^{\infty}_c$ such that $\Delta \phi_k \to \Delta \phi$,
namely the density of $\cC^{\infty}_c$ in the $\mathcal{D}(\Delta)$-norm
as in the approach of \cite{BGL}.

In the Finsler case, we know that the Wasserstein contraction fails
(see Remark~\ref{rm:cont} below).
Nonetheless, if one can show the Bochner inequality in the above form
as well as $F(\Grad u) \in L^{\infty}(M)$, then it follows from the argument along \cite[Lemma~3.2]{Sav}
that $d[F(\Grad u)](\nabla^{\Grad u} F(\Grad u)) \in L^1(M)$
and we obtain the gradient estimates.

\if0
\subsubsection{Discussion}

We finally present a calculation supporting the validity of \eqref{eq:hypo} (at least) for $K>0$.
Notice that we have $\fm(M)<\infty$ when $K>0$.
Putting $f=u_{t-\sigma}$ for brevity, we deduce from Proposition~\ref{pr:Boc+} that
\begin{align*}
&d[F(\Grad f)] \big( \nabla^{\Grad f} [F(\Grad f)] \big) +d(\Lap f)(\Grad f) +KF^2(\Grad f) \\
&\le \Delta\!^{\Grad f} \bigg[ \frac{F^2(\Grad f)}{2} \bigg]
 = \frac{1}{2} \Delta\!^{\Grad f} \Big[ \big( F^{3/2}(\Grad f) \big)^{4/3} \Big] \\
&= \frac{2}{3} \div_{\fm} \bigg( \sqrt{F(\Grad f)} \cdot \nabla^{\Grad f}[F^{3/2}(\Grad f)] \bigg) \\
&= \frac{2}{3} \sqrt{F(\Grad f)} \cdot \Delta\!^{\Grad f} \big[ F^{3/2}(\Grad f) \big]
 +\frac{1}{2} d[F(\Grad f)] \big( \nabla^{\Grad f} [F(\Grad f)] \big)
\end{align*}
on $M_f$.
Therefore we have
\begin{align}
&d[F(\Grad f)] \big( \nabla^{\Grad f} [F(\Grad f)] \big) \nonumber\\
&\le \frac{4}{3} \sqrt{F(\Grad f)} \cdot \Delta\!^{\Grad f} \big[ F^{3/2}(\Grad f) \big]
 -2d(\Lap f)(\Grad f) -2KF^2(\Grad f) \label{eq:Boc++}
\end{align}
on $M_f$, and $\int_M \Delta\!^{\Grad f}[F^{3/2}(\Grad f)] \,d\fm <\infty$
implies \eqref{eq:hypo} since then
\begin{align*}
&\int_M \sqrt{d[F(\Grad f)] \big( \nabla^{\Grad f} [F(\Grad f)] \big)} \,d\fm \\
&\le \bigg( \int_M \frac{1}{\sqrt{F(\Grad f)}} d[F(\Grad f)] \big( \nabla^{\Grad f} [F(\Grad f)] \big)
 \,d\fm \bigg)^{1/2}
 \bigg( \int_M \sqrt{F(\Grad f)} \,d\fm \bigg)^{1/2} \\
&< \infty.
\end{align*}

Now we take a cut-off function induced from the distance structure of $g_{\Grad f}$.
For example, let $d_f$ be the distance function defined by the Finsler metric
$F_f(v):=\sqrt{g_f(v,v)}$ on $M_f$ and $F_f(v):=F(v)$ on $M \setminus M_f$.
Note that $d_f$ is comparable with $d$ by $\sC_F<\infty$ and $\sS_F<\infty$.
We fix an arbitrary base point $x_0 \in M_f$ and define $\phi_{r,\ve}$,
for large $r>0$ and small $\ve>0$, by
\[ \phi_{r,\ve}(x) :=\max\bigg\{ \min\bigg\{ \frac{r-d_f(x_0,x)}{\ve},1 \bigg\}, 0 \bigg\}. \]
Notice that $\phi_{r,\ve}(x)=1$ if $d_f(x_0,x) \le r-\ve$ and $\phi_{r,\ve}(x)=0$ if $d_f(x_0,x) \ge r$,
and that $g_{\Grad f}(\nabla^{\Grad f}\phi_{r,\ve},\nabla^{\Grad f}\phi_{r,\ve})=\ve^{-1}$
for $x \in M_f$ with $r-\ve<d_f(x_0,x)<r$.
Then we can deduce \eqref{eq:hypo} once
\[ \liminf_{r \to \infty} \lim_{\ve \to 0}
 \int_M \phi_{r,\ve} \Delta\!^{\Grad f} \big[ F^{3/2}(\Grad f) \big] \,d\fm <\infty \]
is shown.
Let us observe
\begin{align*}
\lim_{\ve \to 0} \int_M \phi_{r,\ve} \Delta\!^{\Grad f} \big[ F^{3/2}(\Grad f) \big] \,d\fm
&= -\lim_{\ve \to 0} \int_M d\phi_{r,\ve} \Big( \nabla^{\Grad f} \big[ F^{3/2}(\Grad f) \big] \Big) \,d\fm \\
&= \int_{S_f(x_0,r)}
 d[d_f(x_0,\cdot)] \Big( \nabla^{\Grad f} \big[ F^{3/2}(\Grad f) \big] \Big) \,d\mathbf{A}_r,
\end{align*}
where $S_f(x_0,r):=\{ y \in M \,|\, d_f(x_0,y)=r \}$ and
$\mathbf{A}_r$ is the area form on $S_f(x_0,r)$ with respect to $\fm$,
namely $d\fm =d\mathbf{A}_r \,dr$.
Since $\int_M F^2(\Grad f) \,d\fm<\infty$,
\[ \liminf_{r \to \infty} \int_{S_f(x_0,r)}
 d[d_f(x_0,\cdot)] \Big( \nabla^{\Grad f} \big[ F^{3/2}(\Grad f) \big] \Big) \,d\mathbf{A}_r \]
seems finite.
We may make use of the normal decay of the measure $\fm$ induced from $\Ric_{\infty} \ge K>0$.


We rewrite the RHS by using the measure $\mathcal{A}_{x_0}$ on
$U^f_{x_0}M:=\{ v \in T_xM \,|\, g_{\Grad f}(v,v)=1 \}$ induced from $\fm$ as
\[ \int_{U^f_{x_0}M} \del_r [F^{3/2}(\Grad f)] \alpha_r(v) \,\mathcal{A}_{x_0}(dv), \]
where we abused as
$\del_r[F^{3/2}(\Grad f)] :=D[d_f(x_0,\cdot)] (\nabla^{\Grad f}[F^{3/2}(\Grad f)])$
which is indeed the derivative of $F^{3/2}(\Grad f)$ in the radial direction from $x_0$
with respect to $d_f$.
We assume that, in contrary to \eqref{eq:cutoff},
\[ \liminf_{r \to \infty}
 \int_{U^f_{x_0}M} \del_r [F^{3/2}(\Grad f)] \alpha_r(v) \,\mathcal{A}_{x_0}(dv) >0. \]
Then we have
\begin{align*}
&\int_0^R \int_{U^f_{x_0}M} \del_r [F^{3/2}(\Grad f)] \alpha_r(v)
 \,\mathcal{A}_{x_0}(dv) \,dr \\
&= \int_{U^f_{x_0}M} F^{3/2}(\Grad f) \alpha_R(v) \,\mathcal{A}_{x_0}(dv)
 -\int_0^R \int_{U^f_{x_0}M} F^{3/2}(\Grad f) \del_r \alpha_r \,\mathcal{A}_{x_0}(dv) \,dr.
\end{align*}
The first term in the RHS tends to $0$ as $R \to \infty$,
since $\cE(f)<\infty$ and $\fm(M)<\infty$ yield $\int_M F^{3/2}(\Grad f) \,d\fm<\infty$.
In the second term, we observe from the H\"older inequality that
\begin{align*}
&-\int_0^R \int_{U^f_{x_0}M} F^{3/2}(\Grad f) \del_r \alpha_r \,\mathcal{A}_{x_0}(dv) \,dr \\
&\le \bigg( \int_0^R \int_{U^f_{x_0}M} F^2(\Grad f) \alpha_r \,\mathcal{A}_{x_0}(dv) \,dr \bigg)^{3/4}
 \bigg( \int_0^R \int_{U^f_{x_0}M \cap \{\del_r \alpha_r <0\}}
 \bigg( \frac{\del_r \alpha_r}{\alpha_r^{3/4}} \bigg)^4 \,\mathcal{A}_{x_0}(dv) \,dr \bigg)^{1/4} \\
&= \bigg( \int_M F^2(\Grad f) \,d\fm \bigg)^{3/4}
 \bigg( \int_0^R \int_{U^f_{x_0}M \cap \{\del_r \alpha_r <0\}} \big( 4\del_r[\alpha_r^{1/4}] \big)^4
 \,\mathcal{A}_{x_0}(dv) \,dr \bigg)^{1/4}.
\end{align*}
Clearly $\int_M F^2(\Grad f) \,d\fm<\infty$ holds, and it follows from
$|\del_r \alpha_r|<\ve$ for large $r$ that
\[ \int_0^R \int_{U^f_{x_0}M \cap \{\del_r \alpha_r <0\}}
 \big( \del_r[\alpha_r^{1/4}] \big)^4 \,\mathcal{A}_{x_0}(dv) \,dr
 \le -\int_0^R \int_{U^f_{x_0}M \cap \{\del_r \alpha_r <0\}}
 \del_r[\alpha_r^{1/4}] \,\mathcal{A}_{x_0}(dv) \,dr. \]

{\color{cyan}Need $\alpha_r \downarrow 0$}.

Since the integral in $r$ is finite, we have \eqref{eq:cutoff}.

Combining \eqref{eq:Boc++} and \eqref{eq:cutoff} yields
\begin{align*}
0 \le \int_M \frac{1}{\sqrt{F(\Grad f)}} D[F(\Grad f)] \big( \nabla^{\Grad f} [F(\Grad f)] \big) \,d\fm
 < \infty,
\end{align*}
and hence
\fi

\subsection{Characterizations of lower Ricci curvature bounds}\label{ssc:char}

We close the section with several characterizations of the lower Ricci curvature bound
$\Ric_{\infty} \ge K$.

\begin{theorem}[Characterizations of $\Ric_{\infty} \ge K$]\label{th:char}
Let $(M,F,\fm)$ be complete and satisfy $\sC_F<\infty$ and $\sS_F<\infty$.
We assume that \eqref{eq:hypo} holds for all solutions $(u_t)_{t \ge 0}$
to the heat equation with $u_0 \in \cC^{\infty}_c(M)$.
Then, for each $K \in \R$, the following are equivalent$:$
\begin{enumerate}[{\rm (I)}]
\item $\Ric_{\infty} \ge K$.

\item The Bochner inequality
\[ \Delta\!^{\Grad u} \bigg[ \frac{F^2(\Grad u)}{2} \bigg] -d(\Lap u)(\Grad u)
 \ge KF^2(\Grad u) \]
holds on $M_u$ for all $u \in \cC^{\infty}(M)$.

\item The improved Bochner inequality
\[ \Delta\!^{\Grad u} \bigg[ \frac{F^2(\Grad u)}{2} \bigg] -d(\Lap u)(\Grad u) -KF^2(\Grad u)
 \ge d[F(\Grad u)] \big( \nabla^{\Grad u} [F(\Grad u)] \big) \]
holds on $M_u$ for all $u \in \cC^{\infty}(M)$.

\item The $L^2$-gradient estimate
\[ F^2(\Grad u_t)
 \le \e^{-2K(t-s)} P_{s,t}^{\Grad u} \big( F^2(\Grad u_s) \big), \qquad 0 \le s<t<\infty, \]
holds for all global solutions $(u_t)_{t \ge 0}$ to the heat equation
with $u_0 \in \cC^{\infty}_c(M)$.

\item The $L^1$-gradient estimate
\[ F(\Grad u_t)
 \le \e^{-K(t-s)} P_{s,t}^{\Grad u} \big( F(\Grad u_s) \big), \qquad 0 \le s<t<\infty, \]
holds for all global solutions $(u_t)_{t \ge 0}$ to the heat equation
with $u_0 \in \cC^{\infty}_c(M)$.
\end{enumerate}
\end{theorem}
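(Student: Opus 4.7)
I will establish the cycle $(\mathrm{I})\Rightarrow(\mathrm{III})\Rightarrow(\mathrm{II})\Rightarrow(\mathrm{I})$ together with $(\mathrm{I})\Rightarrow(\mathrm{IV})\Rightarrow(\mathrm{II})$ and $(\mathrm{I})\Rightarrow(\mathrm{V})\Rightarrow(\mathrm{III})$, which collectively yield the equivalence of all five conditions. The forward implications from (I) are already proved in the excerpt: (II) in the form stated is the case $N=\infty$ of Theorem~\ref{th:BW}; (III) is Proposition~\ref{pr:Boc+}; (IV) is Theorem~\ref{th:L2}; (V) is Theorem~\ref{th:L1}. Moreover (III)$\Rightarrow$(II) is immediate, for by \eqref{eq:f1f2} the right-hand side of (III) equals the squared $g_{\Grad u}$-norm of $\nabla^{\Grad u}[F(\Grad u)]$, hence is nonnegative. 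It thus suffices to prove $(\mathrm{II})\Rightarrow(\mathrm{I})$, $(\mathrm{IV})\Rightarrow(\mathrm{II})$, and $(\mathrm{V})\Rightarrow(\mathrm{III})$.

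For $(\mathrm{II})\Rightarrow(\mathrm{I})$, I will test (II) at each point $x_0\in M$ against a carefully chosen quadratic function. Fix a unit vector $v\in T_{x_0}M$ and work in a coordinate chart centered at $x_0$ in which $g_{ij}(v)=\delta_{ij}$. With $\alpha:=\cL(v)$, consider
\[ u(x):=\sum_{i=1}^n \alpha_i x^i +\frac{1}{2}\sum_{i,j=1}^n H_{ij} x^i x^j, \]
extended globally by a cutoff (which does not affect any derivative at $x_0$). Then $Du(x_0)=\alpha$, hence $\Grad u(x_0)=v$, and the explicit coordinate expression for $\Grad^2 u$ derived inside the proof of Proposition~\ref{pr:Boc+} shows that one can choose the symmetric matrix $(H_{ij})$ so that $\Grad^2 u(x_0)=0$ (the required $H_{ij}$ is prescribed by the Chern connection coefficients at $v$, and its symmetry in $i,j$ follows from the torsion-free property of the Chern connection). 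Applied to this $u$ at $x_0$, the Bochner--Weitzenb\"ock equality \eqref{eq:BW} reduces to
\[ \Delta\!^{\Grad u}\bigg[\frac{F^2(\Grad u)}{2}\bigg](x_0) -D[\Lap u](\Grad u)(x_0) =\Ric_\infty(v), \]
while (II) bounds the LHS below by $KF^2(v)=K$. Hence $\Ric_\infty(v)\ge K$, and since $\Ric_\infty$ is $2$-homogeneous this yields (I).

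For $(\mathrm{IV})\Rightarrow(\mathrm{II})$, fix $u\in\cC^\infty(M)$ and $x_0\in M_u$. Choose $\tilde u\in\cC^\infty_c(M)$ agreeing with $u$ on a neighborhood of $x_0$, let $(u_t)_{t\ge 0}$ be the global heat flow with $u_0=\tilde u$, and consider
\[ \Phi(t):=\e^{-2Kt}P^{\Grad u}_{0,t}\bigl(F^2(\Grad\tilde u)\bigr)(x_0) -F^2(\Grad u_t)(x_0). \]
By (IV), $\Phi(t)\ge 0$, and $\Phi(0)=0$, so $\Phi'(0^+)\ge 0$. Since $D\tilde u(x_0)\ne 0$, parabolic regularity of the nonlinear heat flow (Theorem~\ref{th:hf}(ii)) and of the linearized semigroup acting on $F^2(\Grad\tilde u)\in\cC^1_c(M)$ (Proposition~\ref{pr:lin}(ii)) guarantees both terms are smooth in $(t,x)$ near $(0,x_0)$. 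By \eqref{eq:delF},
\[ \del_t F^2(\Grad u_t)\bigl|_{t=0}(x_0)=2D[\Lap\tilde u](\Grad\tilde u)(x_0), \]
while $\del_t P^{\Grad u}_{0,t}(F^2(\Grad\tilde u))\bigl|_{t=0}(x_0)=\Delta\!^{\Grad\tilde u}[F^2(\Grad\tilde u)](x_0)$. Inserting these into $\Phi'(0^+)\ge 0$ and rearranging gives (II) for $\tilde u$ at $x_0$, which coincides with (II) for $u$ at $x_0$. The derivation of $(\mathrm{V})\Rightarrow(\mathrm{III})$ is entirely parallel: apply the same procedure to the $L^1$-estimate, use the chain rule $\del_t F(\Grad u_t)=F(\Grad u_t)^{-1}D[\Lap u_t](\Grad u_t)$, and convert $F(\Grad u)\Delta\!^{\Grad u}[F(\Grad u)]$ via the Leibniz identity
\[ \Delta\!^{\Grad u}\bigg[\frac{F^2(\Grad u)}{2}\bigg] =F(\Grad u)\Delta\!^{\Grad u}[F(\Grad u)] +D[F(\Grad u)]\bigl(\nabla^{\Grad u}[F(\Grad u)]\bigr) \]
to produce exactly the improved Bochner inequality (III).

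The principal obstacle is justifying the pointwise derivative at $t=0^+$ in the argument above: the initial datum $F^2(\Grad\tilde u)$ is only $\cC^1_c(M)$ globally, so one cannot invoke full smoothness of the heat semigroups. The remedy is to restrict to the open set $M_{\tilde u}\ni x_0$, where $F^2(\Grad\tilde u)$ is smooth and the local parabolic regularity from Theorem~\ref{th:hf}(ii) and Proposition~\ref{pr:lin}(ii) (together with the local uniform ellipticity of $\Delta\!^{V_t}$) suffices. The construction of the degree-two test function in $(\mathrm{II})\Rightarrow(\mathrm{I})$ also requires some bookkeeping with Chern connection coefficients at $v$, but is elementary once the coordinate formula for $\Grad^2 u$ is at hand.
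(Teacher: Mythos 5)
Your proposal is correct in its broad strategy, and all seven implications you claim can be carried out. The implications $(\mathrm{I})\Rightarrow(\mathrm{III})$, $(\mathrm{III})\Rightarrow(\mathrm{II})$, and $(\mathrm{II})\Rightarrow(\mathrm{I})$ are exactly as in the paper (the test function in $(\mathrm{II})\Rightarrow(\mathrm{I})$ is the same one, with $\alpha_i=v_0^i$ in the chosen coordinates and $H_{ij}=\sum_k\Gamma^k_{ij}(v_0)v_0^k$). Where you genuinely diverge is in how you close the loop from $(\mathrm{V})$: the paper establishes $(\mathrm{V})\Rightarrow(\mathrm{IV})$ via the Jensen-type inequality $P^{\Grad u}_{s,t}(f)^2\le P^{\Grad u}_{s,t}(f^2)$ (proved from positivity-preservation of the linearized semigroup by a quadratic-form trick) and then goes $(\mathrm{IV})\Rightarrow(\mathrm{II})\Rightarrow(\mathrm{I})\Rightarrow(\mathrm{III})$; you instead differentiate the $L^1$-estimate directly at $t=0^+$ and use the Leibniz identity $\Delta\!^{\Grad u}[F^2(\Grad u)/2]=F(\Grad u)\Delta\!^{\Grad u}[F(\Grad u)]+D[F(\Grad u)](\nabla^{\Grad u}[F(\Grad u)])$ to recover $(\mathrm{III})$. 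Your route makes the correspondence between the $L^1$-gradient estimate and the \emph{improved} Bochner inequality transparent, and avoids the Jensen device entirely; what it costs is that the delicate derivative-at-$t=0^+$ argument must be run twice (once for $(\mathrm{IV})\Rightarrow(\mathrm{II})$ and once for $(\mathrm{V})\Rightarrow(\mathrm{III})$), whereas the Jensen step dispatches $(\mathrm{V})\Rightarrow(\mathrm{IV})$ with no differentiation at all.

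The one place your write-up is thin is precisely where you flag it: the justification of $\Phi'(0^+)\ge 0$ requires convergence of $\Delta\!^{V_t}P^{\Grad u}_{0,t}(f)(x_0)$ as $t\downarrow 0$ when the initial datum $f=F^2(\Grad\tilde u)$ (resp.\ $F(\Grad\tilde u)$) is only $\cC^1_c$ (resp.\ Lipschitz) globally; merely appealing to ``local parabolic regularity'' does not automatically give regularity up to the initial slice. The paper sidesteps this by invoking an interior monotonicity argument from \cite{OSbw} (the assertion ``$H'(s)\le 0$'' is verified at times $s>0$ and then integrated), and you would need a similar localization-plus-interior-Schauder argument near $(0,x_0)\in(0,T)\times M_{\tilde u}$, together with a passage to the boundary via continuity of the one-sided derivative. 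This is fillable but should be spelled out. Note also that for $(\mathrm{V})\Rightarrow(\mathrm{III})$ the initial datum $F(\Grad\tilde u)$ is less regular than $F^2(\Grad\tilde u)$ across the zero set of $D\tilde u$, so the localization to $M_{\tilde u}$ is essential rather than cosmetic.
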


\begin{proof}
We have shown (I) $\Rightarrow$ (III) in Proposition~\ref{pr:Boc+},
(III) $\Rightarrow$ (V) in Theorem~\ref{th:L1}, and (V) $\Rightarrow$ (IV) in Corollary~\ref{cr:L2}.
One can deduce (IV) $\Rightarrow$ (II) from the proof of \cite[Theorem~4.1]{OSbw}
or by differentiating $F^2(\Grad u_t) \le \e^{-2Kt} P_{0,t}^{\Grad u}(F^2(\Grad u_0))$ at $t=0$
(recall \eqref{eq:delF}, see also \cite{GKO}).
Let us finally prove (II) $\Rightarrow$ (I).
Given $v_0 \in T_{x_0}M \setminus 0$,
fix local coordinates $(x^i)_{i=1}^n$ around $x_0$ with $g_{ij}(v_0)=\delta_{ij}$
and $x^i(x_0)=0$ for all $i$.
Consider the function
\[ u:=\sum_{i=1}^n v_0^i x^i
 +\frac{1}{2}\sum_{i,j,k=1}^n \Gamma_{ij}^k (v_0) v_0^k x^i x^j \]
on a neighborhood of $x_0$, and observe that
$\Grad u(x_0)=v_0$ as well as $(\Grad^2 u)|_{T_{x_0}M}=0$
(see \cite[Lemma~2.3]{OSbw} for the precise expression in coordinates of $\Grad^2 u$).
Then the Bochner--Weitzenb\"ock formula \eqref{eq:BW} and (II) imply
\[ \Ric_{\infty}(v_0)
 =\Delta\!^{\Grad u} \bigg[ \frac{F^2(\Grad u)}{2} \bigg](x_0) -d(\Lap u)(\Grad u)(x_0)
 \ge KF^2(v_0). \]
This completes the proof.
$\qedd$
\end{proof}

\begin{remark}[The lack of contraction]\label{rm:cont}
In the Riemannian context, lower Ricci curvature bounds are also equivalent to
contraction estimates of heat flow with respect to the Wasserstein distance
(we refer to \cite{vRS} for the Riemannian case,
and \cite{EKS} for the case of $\RCD$-spaces).
More generally, for linear semigroups, gradient estimates are directly equivalent to
the corresponding contraction properties (see \cite{Ku}).
In our Finsler setting, however, the lack of the \emph{commutativity} (see \cite{OP})
prevents such a contraction estimate, at least in the same form (see \cite{OSnc} for details).
\end{remark}

\begin{remark}[Similarities to (super) Ricci flow theory]\label{rm:sRF}
The methods in this section have connections with the Ricci flow theory.
Ricci flow provides time-dependent Riemannian metrics obeying a kind of heat equation
on the space of Riemannian metrics,
while we considered the time-dependent (singular) Riemannian structures $g_{\Grad u}$
for $u$ solving the heat equation.
More precisely, what corresponds to our lower Ricci curvature bound is
\emph{super Ricci flow} (super-solutions to the Ricci flow equation).
We refer to \cite{MT} for an inspiring work
on a characterization of super Ricci flow in terms of the contraction of heat flow,
and to \cite{Stsrf} for a recent investigation of super Ricci flow on time-dependent
metric measure spaces including various characterizations related to Theorem~\ref{th:char}.
Then, again, what is missing in our Finsler setting is the contraction property,
for which the Riemannian nature of the space is necessary.
\end{remark}

\section{Bakry--Ledoux's isoperimetric inequality}\label{sc:BL}

This section is devoted to the isoperimetric inequality,
as a geometric application of the improved Bochner inequality (Proposition~\ref{pr:Boc+}).
We will assume $\Ric_{\infty} \ge K>0$, then $\fm(M)<\infty$ holds
(see \cite[Theorem~4.26]{StI})
and hence we can normalize $\fm$ as $\fm(M)=1$ without changing $\Ric_{\infty}$
($c\fm$ with $c>0$ gives the same weighted Ricci curvature as $\fm$).

For a Borel set $A \subset M$,
define the \emph{Minkowski exterior boundary measure} as
\[ \fm^+(A):=\liminf_{\ve \downarrow 0} \frac{\fm(B^+(A,\ve))-\fm(A)}{\ve}, \]
where $B^+(A,\ve):=\{y \in M \,|\, \inf_{x \in A}d(x,y)<\ve \}$
is the forward $\ve$-neighborhood of $A$.
Then the (forward) \emph{isoperimetric profile} $\cI_{(M,F,\fm)}:[0,1] \lra [0,\infty)$
of $(M,F,\fm)$ is defined by
\[ \cI_{(M,F,\fm)}(\theta):=\inf\{ \fm^+(A) \,|\,
 A \subset M: \text{Borel set with}\ \fm(A)=\theta \}. \]
Clearly $\cI_{(M,F,\fm)}(0)=\cI_{(M,F,\fm)}(1)=0$.
The following is our main result (stated as Theorem in the introduction).

\begin{theorem}[Bakry--Ledoux's isoperimetric inequality]\label{th:BL}
Let $(M,F)$ be complete and satisfy $\Ric_{\infty} \ge K>0$,
$\fm(M)=1$, $\sC_F<\infty$ and $\sS_F<\infty$.
We assume that \eqref{eq:hypo} holds for all solutions $(u_t)_{t \ge 0}$
to the heat equation with $u_0 \in \cC^{\infty}_c(M)$.
Then we have
\begin{equation}\label{eq:BLisop}
\cI_{(M,F,\fm)}(\theta) \ge \cI_K(\theta)
\end{equation}
for all $\theta \in [0,1]$, where
\[ \cI_K(\theta):=\sqrt{\frac{K}{2\pi}} \e^{-Kc^2(\theta)/2} \qquad
 \text{with}\ \ \theta=\int_{-\infty}^{c(\theta)} \sqrt{\frac{K}{2\pi}} \e^{-Ka^2/2} \,da. \]
\end{theorem}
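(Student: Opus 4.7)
The plan is to follow Bakry--Ledoux's semigroup strategy. I would first establish a Bobkov-type functional inequality
\begin{equation}\label{eq:Bob}
\int_M \sqrt{\cI_K^2(f) + F^2(\Grad f)} \,d\fm \;\ge\; \cI_K\bigg( \int_M f \,d\fm \bigg)
\end{equation}
for smooth $f:M\to[0,1]$, then pass to $\chi_A$ via Lipschitz cutoffs. With $f_{\ve}(x):=\max\{0,\,1-d(A,x)/\ve\}$ (mollified into $\cC_c^{\infty}$), one has $\int_M F(\Grad f_\ve)\,d\fm \le (\fm(B^+(A,\ve))-\fm(A))/\ve$ since $F(\Grad d(A,\cdot))\le 1$, and the elementary inequality $\sqrt{a^2+b^2}\le a+b$ combined with $\int_M\cI_K(f_\ve)\,d\fm\to 0$ (as $\cI_K(0)=\cI_K(1)=0$) turns \eqref{eq:Bob} applied to $f_\ve$ into $\liminf_{\ve\downarrow 0}\int_M F(\Grad f_\ve)\,d\fm \ge \cI_K(\fm(A))$; taking the $\liminf$ yields $\fm^+(A)\ge\cI_K(\fm(A))$.

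To prove \eqref{eq:Bob}, let $u_t:=P_t f$ be the nonlinear heat-flow solution (which remains in $[0,1]$ by the truncation argument of \eqref{eq:u>=0} applied symmetrically above and below), and set
\[
\eta_t(x):=\sqrt{\cI_K^2(u_t(x))+F^2(\Grad u_t(x))}, \qquad J(t):=\int_M \eta_t \,d\fm.
\]
The central assertion is the pointwise subsolution inequality
\begin{equation}\label{eq:subsol}
\del_t\eta_t \;\le\; \Delta\!^{\Grad u_t}\eta_t \qquad \text{on $M_{u_t}$.}
\end{equation}
Granting \eqref{eq:subsol}, a test-function argument against $\psi=1$ (justified by a completeness/cutoff argument as in the proof of Theorem~\ref{th:L1}) gives $J(t)\le J(0)$. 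Under $\Ric_{\infty}\ge K>0$ and $\sS_F<\infty$, the ergodicity of the nonlinear heat flow (Proposition~\ref{pr:ergo}) and the $L^2$-gradient estimate $\int_M F^2(\Grad u_t)\,d\fm \le e^{-2Kt}\int_M F^2(\Grad f)\,d\fm$ (Theorem~\ref{th:L2}) force $u_t\to\int_M f\,d\fm$ in $L^2$ and $F(\Grad u_t)\to 0$ in $L^1$ as $t\to\infty$; hence $\lim_{t\to\infty}J(t)=\cI_K(\int_M f\,d\fm)$, which combined with $J(0)\ge\lim_{t\to\infty}J(t)$ is exactly \eqref{eq:Bob}.

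The computation behind \eqref{eq:subsol} uses the full strength of the improved Bochner inequality. From $\del_t u_t=\Lap u_t$ and \eqref{eq:delF}, $\del_t\eta_t^2 = 2\cI_K\cI_K'\,\Lap u_t + 2D[\Lap u_t](\Grad u_t)$, while the linearity of $\Delta\!^{\Grad u_t}$, the chain rule, and the Bochner--Weitzenb\"ock identity \eqref{eq:BW} yield
\[
\Delta\!^{\Grad u_t}\eta_t^2 - \del_t\eta_t^2
= 2\Ric_{\infty}(\Grad u_t) + 2\|\Grad^2 u_t\|^2_{\HS(\Grad u_t)} + 2\bigl[(\cI_K')^2 + \cI_K\cI_K''\bigr]F^2(\Grad u_t).
\]
The defining ODE $\cI_K\cI_K''=-K$ of the Gaussian isoperimetric profile and $\Ric_{\infty}(\Grad u_t)\ge K F^2(\Grad u_t)$ cancel the $K$-contributions, leaving
\[
\Delta\!^{\Grad u_t}\eta_t^2 - \del_t\eta_t^2 \;\ge\; 2\|\Grad^2 u_t\|^2_{\HS(\Grad u_t)} + 2\,(\cI_K')^2\, F^2(\Grad u_t).
\]
Writing $\Delta\!^{\Grad u_t}\eta_t^2 = 2\eta_t\Delta\!^{\Grad u_t}\eta_t + 2|\nabla^{\Grad u_t}\eta_t|^2_{g_{\Grad u_t}}$ and expanding $\eta_t\nabla^{\Grad u_t}\eta_t = \cI_K\cI_K'\,\Grad u_t + \nabla^{\Grad u_t}[F^2(\Grad u_t)/2]$, \eqref{eq:subsol} reduces (after multiplying by $\eta_t^2=\cI_K^2+F^2$) to
\[
\cI_K^2\|\Grad^2 u_t\|^2 + F^2\|\Grad^2 u_t\|^2 + (\cI_K')^2 F^4 \;\ge\; \cI_K\cI_K'\,D[F^2(\Grad u_t)](\Grad u_t) + \bigl|\nabla^{\Grad u_t}[F^2(\Grad u_t)/2]\bigr|^2.
\]
Inequality \eqref{eq:Boc+'} directly gives $|\nabla^{\Grad u_t}[F^2/2]|^2 \le F^2\|\Grad^2 u_t\|^2$, absorbing the second term on the right; Cauchy--Schwarz and \eqref{eq:Boc+'} yield $|D[F^2](\Grad u_t)|\le 2F^2\|\Grad^2 u_t\|$, and the AM--GM bound $2|\cI_K\cI_K'|\,F^2\|\Grad^2 u_t\|\le \cI_K^2\|\Grad^2 u_t\|^2 + (\cI_K')^2 F^4$ absorbs the first.

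The main obstacles I anticipate are (i)~interpreting \eqref{eq:subsol} across the zero set of $\Grad u_t$, where $\Delta\!^{\Grad u_t}$ requires an auxiliary non-vanishing $V_t$ and $u_t$ is merely $\cC^{1,\alpha}\cap H^2_{\loc}$ (Theorem~\ref{th:hf}); (ii)~upgrading the pointwise subsolution property to the integral comparison $J(t)\le J(0)$ on possibly non-compact $M$, via a cutoff argument patterned on the proof of Theorem~\ref{th:L1}; and (iii)~establishing the ergodicity and $L^2$-exponential decay in the non-reversible setting, where the hypothesis $\sS_F<\infty$ is essential.
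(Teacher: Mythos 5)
Your overall strategy coincides with the paper's: establish a Bobkov-type functional inequality (the paper's Corollary~\ref{cr:key}, noting $\cI_K=\sqrt{K}\,\scN$), deduce it from a pointwise subsolution estimate for $\eta_t=\sqrt{\cI_K^2(u_t)+F^2(\Grad u_t)}$ along the nonlinear heat flow, and feed in the improved Bochner inequality. Your reduction of $\del_t\eta_t\le\Delta\!^{\Grad u_t}\eta_t$ to \eqref{eq:Boc+'} plus Cauchy--Schwarz and AM--GM is correct and is precisely the $\alpha=K^{-1}$ case of the paper's Theorem~\ref{th:key} (where $c_\alpha\equiv K^{-1}$ is constant). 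The paper proves a more general estimate with a time-dependent coefficient $c_\alpha(t-s)$ and routes the comparison through the linearized semigroup $P^{\Grad u}_{s,t}$ rather than integrating $\eta_t$ directly, and it completes a perfect square at the end instead of invoking AM--GM, but for the isoperimetric application these differences are cosmetic.

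There is, however, a genuine error in the final passage from the Bobkov inequality to $\fm^+(A)\ge\cI_K(\fm(A))$. You claim $\int_M F(\Grad f_\ve)\,d\fm\le(\fm(B^+(A,\ve))-\fm(A))/\ve$ ``since $F(\Grad d(A,\cdot))\le 1$,'' but this does not follow in the non-reversible setting: the Legendre transform is nonlinear, so $\Grad f_\ve=\cL^*\bigl(-\ve^{-1}D[d(A,\cdot)]\bigr)$ is \emph{not} $-\ve^{-1}\Grad\, d(A,\cdot)$, and $F(\Grad f_\ve)=\ve^{-1}F^*\bigl(-D[d(A,\cdot)]\bigr)$ can be as large as $\ve^{-1}\Lambda_F$ even though $F^*\bigl(D[d(A,\cdot)]\bigr)=1$. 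Running your argument as written therefore only yields $\cI_{(M,F,\fm)}(\theta)\ge\Lambda_F^{-1}\cI_K(\theta)$, which is exactly the weaker estimate \eqref{eq:needle} from \cite{Oneedle} that the present theorem improves. The paper avoids this by building the cutoff from the distance \emph{to} $A$: with $u^\ve(x)=\max\{1-\ve^{-1}d(x,A),0\}$ one has $Du^\ve=-\ve^{-1}D[d(\cdot,A)]$ and $F^*\bigl(-D[d(\cdot,A)]\bigr)=1$ a.e.\ (the Eikonal identity, seen by rewriting via $\rev{F}$), so $F(\Grad u^\ve)=\ve^{-1}$ exactly --- but now $u^\ve$ is supported in the backward neighborhood $B^-(A,\ve)$. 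This gives the sharp isoperimetric inequality for the reverse structure $\rev{F}$; since $\Ric_\infty\ge K$ and $\sS_F<\infty$ are invariant under $F\leftrightarrow\rev{F}$, running the whole argument with $\rev{F}$ in place of $F$ delivers \eqref{eq:BLisop}. Your other anticipated obstacles (the zero set of $Du_t$, the non-compact cutoff, ergodicity under $\sS_F<\infty$) are correctly flagged and match the paper's treatment.
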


Recall that, under $\sC_F<\infty$ or $\sS_F<\infty$, the forward completeness
is equivalent to the backward completeness by Lemma~\ref{lm:rev}.
In the Riemannian case, the inequality \eqref{eq:BLisop} is due to Bakry and Ledoux \cite{BL}
(see also \cite[\S 8.5.2]{BGL})
and can be regarded as the dimension-free version of \emph{L\'evy--Gromov's isoperimetric inequality}
(see \cite{Lev1,Lev2,Gr}).
L\'evy--Gromov's classical isoperimetric inequality asserts that
the isoperimetric profile of an $n$-dimensional Riemannian manifold $(M,g)$ with $\Ric \ge n-1$
is bounded below by the profile of the unit sphere $\Sph^n$
(both spaces are equipped with the normalized volume measures).
In \eqref{eq:BLisop}, the role of the unit sphere is played by the real line $\R$
equipped with the Gaussian measure $\sqrt{K/2\pi} \,\e^{-Kx^2/2} \,dx$,
thereby \eqref{eq:BLisop} is also called the \emph{Gaussian isoperimetric inequality}.

In \cite{Oneedle}, generalizing Cavalletti and Mondino's localization technique in \cite{CM},
we showed the slightly weaker inequality (recall the introduction)
\[ \cI_{(M,F,\fm)}(\theta) \ge \Lambda_F^{-1} \cdot \cI_K(\theta) \]
under the finite reversibility $\Lambda_F<\infty$ (but without $\sC_F<\infty$ nor $\sS_F<\infty$).
In fact we have treated in \cite{Oneedle} the general curvature-dimension-diameter bound
$\Ric_N \ge K$ and $\diam M \le D$ (in accordance with \cite{Misharp}).
Theorem~\ref{th:BL} sharpens the estimate in \cite{Oneedle}
in the special case of $N=D=\infty$ and $K>0$.

\subsection{Ergodicity}\label{ssc:ergo}

We begin with some properties induced from our hypothesis $\Ric_{\infty} \ge K>0$.

\begin{lemma}[Global Poincar\'e inequality]\label{lm:Poin}
Suppose that $(M,F,\fm)$ is forward or backward complete,
$\Ric_{\infty} \ge K>0$ and $\fm(M)=1$.
Then we have, for any locally Lipschitz function $f \in H_0^1(M)$,
\begin{equation}\label{eq:Poin}
\int_M f^2 \,d\fm -\bigg( \int_M f \,d\fm \bigg)^2
 \le \frac{1}{K} \int_M F^*(df)^2 \,d\fm.
\end{equation}
\end{lemma}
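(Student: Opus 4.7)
The plan is to adapt the classical Bakry--\'Emery heat semigroup proof of the Poincar\'e inequality to our nonlinear Finsler setting, building on the $L^2$-gradient estimate (Theorem~\ref{th:L2}) and the ergodicity property established elsewhere in this subsection (Proposition~\ref{pr:ergo}). By density of $\cC_c^{\infty}(M)$ in $H_0^1(M)$ together with the locally Lipschitz assumption, it suffices to prove \eqref{eq:Poin} for $f \in \cC_c^{\infty}(M)$ and then pass to the limit, noting that the right-hand side is continuous in $f$ along $H^1_0$-convergent sequences (via $\|u\|_{H^1}$) and that the left-hand side is $L^2$-continuous.

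Starting from $f \in \cC_c^{\infty}(M)$, let $(u_t)_{t \ge 0}$ be the global solution to the heat equation with $u_0 = f$ given by Theorem~\ref{th:hf}. The first key computation is the energy dissipation identity
\[
\frac{d}{dt} \int_M u_t^2 \,d\fm
 = 2\int_M u_t \cdot \Lap u_t \,d\fm
 = -2\int_M Du_t(\Grad u_t) \,d\fm
 = -4\cE(u_t),
\]
obtained by taking $u_t$ itself as the test function in Definition~\ref{df:hf}. Second, mass conservation $\int_M u_t \,d\fm = \int_M f \,d\fm$ follows by testing the weak formulation against cutoff functions $\psi_k \in \cC_c^{\infty}(M)$ with $\psi_k \uparrow 1$ and $\|F^*(D\psi_k)\|_{L^{\infty}} \to 0$, which exist thanks to the completeness hypothesis (as used in the proof of Theorem~\ref{th:L1}). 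Third, the $L^2$-gradient estimate of Theorem~\ref{th:L2}, integrated against $\fm$ and combined with the mass-preservation identity $\int_M P^{\Grad u}_{0,t}(g) \,d\fm = \int_M g \,d\fm$ (dual to $\widehat{P}^{\Grad u}_{0,t}(1) = 1$), yields
\[
\cE(u_t) \le \e^{-2K(t-s)} \cE(u_s) \qquad \text{for}\ 0 \le s < t.
\]

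Combining these three ingredients, integration of the dissipation identity from $0$ to $T$ gives
\[
\int_M u_T^2 \,d\fm - \int_M f^2 \,d\fm
 = -4\int_0^T \cE(u_t) \,dt
 \ge -\frac{2}{K} \bigl(1 - \e^{-2KT}\bigr) \cE(f).
\]
Invoking ergodicity to send $T \to \infty$, so that $u_T \to \int_M f \,d\fm$ in $L^2$ and hence $\int_M u_T^2 \,d\fm \to (\int_M f \,d\fm)^2$, rearrangement yields the desired variance bound with constant $1/K$. The main obstacle in this plan is precisely the ergodicity step---establishing that $u_t$ converges to the spatial mean of $f$ as $t \to \infty$ in the nonlinear, non-symmetric Finsler setting, where the usual spectral-gap argument for symmetric semigroups is unavailable. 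The remaining points (mass conservation, the adjoint identity for $P^{\Grad u}$, and passage to the $H_0^1$ limit) are technical but routine given the framework built in Section~\ref{ssc:lin}.
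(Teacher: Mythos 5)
Your approach is the classical Bakry--\'Emery semigroup derivation of the Poincar\'e inequality, whereas the paper takes a genuinely different route: it invokes the log-Sobolev inequality \eqref{eq:logSob} (which is known to follow from $\Ric_\infty \ge K$ via the optimal-transport literature cited there) and then derives \eqref{eq:Poin} from it by the standard linearization $\mu_\ve = (1+\ve f)\fm$. Unfortunately your route, as outlined, has a real gap: it is circular. You explicitly invoke the ergodicity statement of Proposition~\ref{pr:ergo} to conclude $\int_M u_T^2\,d\fm \to (\int_M f\,d\fm)^2$, but in this paper Proposition~\ref{pr:ergo} is proved \emph{using} Lemma~\ref{lm:Poin}: its proof estimates $\frac{d}{dt}\Var_\fm(f_t) \le -\frac{2K}{\sS_F}\Var_\fm(f_t)$ precisely by applying \eqref{eq:Poin} to $f_t$. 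You cannot use Proposition~\ref{pr:ergo} as an input to Lemma~\ref{lm:Poin}. You flag ergodicity as ``the main obstacle,'' which shows some awareness, but the issue is not merely that it is hard to prove independently---it is that the specific result you lean on is downstream of the very lemma you are trying to establish.

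A second, independent problem is that your argument requires strictly stronger hypotheses than Lemma~\ref{lm:Poin} carries. The lemma assumes only forward \emph{or} backward completeness, $\Ric_\infty \ge K>0$, and $\fm(M)=1$; it does \emph{not} assume $\sS_F < \infty$. But your plan needs the linearized semigroup $P^{\Grad u}_{s,t}$ (Proposition~\ref{pr:lin}, which requires $\sS_F<\infty$), the $L^2$-gradient estimate of Theorem~\ref{th:L2} (which requires $\sS_F<\infty$), and the mass-conservation identity \eqref{eq:consv} (which the paper justifies using both forward and backward completeness). So even if the circularity were repaired---say, by establishing $u_t \to \int_M f\,d\fm$ in $L^2$ directly from $\cE(u_t)\to 0$ without a Poincar\'e inequality, which is not obvious---you would prove a weaker statement than the lemma. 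The paper's choice of the log-Sobolev route is precisely what lets it avoid both pitfalls: it keeps the logical order Poincar\'e $\Rightarrow$ ergodicity linear, and it avoids invoking any machinery that secretly needs $\sS_F<\infty$ or two-sided completeness.
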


\begin{proof}
It is well known that the curvature bound $\Ric_{\infty} \ge K$ (or $\CD(K,\infty)$) implies
the \emph{log-Sobolev inequality},
\begin{equation}\label{eq:logSob}
\int_M \rho \log \rho \,d\fm \le \frac{1}{2K} \int_M \frac{F^*(d\rho)^2}{\rho} \,d\fm
\end{equation}
for nonnegative locally Lipschitz functions $\rho$ with $\int_M \rho \,d\fm=1$,
and that \eqref{eq:Poin} follows from \eqref{eq:logSob}
(see \cite{OV,LV,Vi,Oint}).
Here we explain the latter step for thoroughness.

By truncation, let us assume that $f$ is bounded.
Since
\[ \int_M f^2 \,d\fm -\bigg( \int_M f \,d\fm \bigg)^2
 =\int_M \bigg( f -\int_M f \,d\fm \bigg)^2 \,d\fm, \]
we can further assume that $\int_M f \,d\fm=0$.
There is nothing to prove if $f \equiv 0$, thereby assume $\|f\|_{L^{\infty}}>0$.
For $\ve \in \R $ with $|\ve|<\|f\|_{L^{\infty}}^{-1}$,
we consider the probability measure $(1+\ve f) \fm$.
Then the log-Sobolev inequality for $\rho_{\ve}:=1+\ve f$ under $\Ric_{\infty} \ge K$ implies
\[ \int_M (1+\ve f) \log(1+\ve f) \,d\fm
 \le \frac{1}{2K} \int_M \frac{\ve^2 F^*(df)^2}{1+\ve f} \,d\fm. \]
Expanding the LHS at $\ve=0$ yields
\[ \int_M \bigg\{ \ve f +\frac{1}{2}(\ve f)^2 +O(\ve^3) \bigg\} \,d\fm
 =\frac{\ve^2}{2} \int_M f^2 \,d\fm +O(\ve^3), \]
where $O(\ve^3)$ in the LHS is uniform in $M$
thanks to the boundedness of $f$.
Hence we have
\[ \frac{\ve^2}{2} \int_M f^2 \,d\fm
 \le \frac{1}{1-\ve \|f\|_{L^{\infty}}} \frac{\ve^2}{2K} \int_M F^*(df)^2 \,d\fm +O(\ve^3). \]
Dividing both sides by $\ve^2$ and letting $\ve \to 0$ implies \eqref{eq:Poin}.
$\qedd$
\end{proof}

The LHS of \eqref{eq:Poin} is the \emph{variance} of $f$:
\[ \Var_{\fm}(f) :=\int_M f^2 \,d\fm -\bigg( \int_M f \,d\fm \bigg)^2. \]
We next show that the Poincar\'e inequality \eqref{eq:Poin} yields
the exponential decay of the variance and a kind of \emph{ergodicity} along heat flow
(similarly to \cite[\S 4.2]{BGL}),
which is one of the key ingredients in the proof of Theorem~\ref{th:BL}
(see the proof of Corollary~\ref{cr:key}).
Given a global solution $(u_t)_{t \ge 0}$ to the heat equation,
since the finiteness of the total mass together with
$\Lambda_F<\infty$ and the completeness implies $1 \in H_0^1(M)$,
we observe the mass conservation:
\begin{equation}\label{eq:consv}
\int_M P^{\Grad u}_{s,t}(f) \,d\fm =\int_M f \,d\fm
\end{equation}
for any $f \in H_0^1(M)$ and $0 \le s<t<\infty$.

\begin{proposition}[Variance decay and ergodicity]\label{pr:ergo}
Assume that $(M,F,\fm)$ is complete and satisfies $\sC_F<\infty$, $\sS_F<\infty$,
$\Ric_{\infty} \ge K>0$ and $\fm(M)=1$.
Then we have, given any global solution $(u_t)_{t \ge 0}$ to the heat equation
and $f \in H_0^1(M)$,
\[ \Var_{\fm}\! \big( P^{\Grad u}_{s,t}(f) \big) \le \e^{-2K(t-s)/\sS_F} \Var_{\fm}(f) \]
for all $0 \le s<t<\infty$.
In particular, $P^{\Grad u}_{s,t}(f)$ converges to
the constant function $\int_M f \,d\fm$ in $L^2(M)$ as $t \to \infty$.
\end{proposition}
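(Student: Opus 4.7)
The plan is to run the standard Bakry--\'Emery variance-decay argument, with the Finsler twist that the bi-linearized form $\cE^{V_t}$ driving the linearized heat semigroup must be compared to the genuine energy $\cE$ via the uniform smoothness constant $\sS_F$, and with the Poincar\'e inequality of Lemma~\ref{lm:Poin} as the input replacing a spectral gap.

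Fix $f \in H_0^1(M)$ and set $g_t:=P^{\Grad u}_{s,t}(f)$. First I would observe, using the mass conservation \eqref{eq:consv} (which is available precisely because $\fm(M)=1$ together with the two-sided completeness gives $1 \in H_0^1(M)$ and hence $1$ is an admissible test function in \eqref{eq:lin-whf} after a partition-of-unity truncation), that the mean $\bar{f}:=\int_M f\,d\fm$ is preserved along the linearized flow, so $\Var_{\fm}(g_t)=\|g_t-\bar{f}\|_{L^2}^2$ and it suffices to track $\|g_t\|_{L^2}^2-\bar{f}^2$. Next I would differentiate: exactly as in \eqref{eq:lin-eng}, using $\del_t g_t \in H_0^1(M)$ (Proposition~\ref{pr:lin}(ii)) to justify the pairing,
\[
\frac{d}{dt}\Var_{\fm}(g_t)=\frac{d}{dt}\|g_t\|_{L^2}^2
 =-2\int_M Dg_t(\nabla^{V_t} g_t)\,d\fm =-4\cE^{V_t}(g_t).
\]

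Now comes the key comparison. By Lemma~\ref{lm:us}, for every $\alpha \in T^*_xM$ one has $F^*(\alpha)^2 \le \sS_F\, g^*_{\cL(V_t)}(\alpha,\alpha)$, which integrates to $\cE(g_t) \le \sS_F\,\cE^{V_t}(g_t)$. Since $g_t$ is locally Lipschitz (Proposition~\ref{pr:lin}(ii) gives $\cC^{1,\alpha}$-regularity in $x$) and $g_t-\bar{f}\in H_0^1(M)$, the Poincar\'e inequality \eqref{eq:Poin} applies to $g_t-\bar{f}$ and yields $\Var_{\fm}(g_t)\le \tfrac{2}{K}\cE(g_t)\le \tfrac{2\sS_F}{K}\cE^{V_t}(g_t)$. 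Combining this with the time derivative above gives the differential inequality
\[
\frac{d}{dt}\Var_{\fm}(g_t) \le -\frac{2K}{\sS_F}\,\Var_{\fm}(g_t),
\]
and Gr\"onwall's lemma then produces the claimed bound $\Var_{\fm}(g_t)\le \e^{-2K(t-s)/\sS_F}\Var_{\fm}(f)$. In the degenerate case $\sS_F=\infty$ the comparison drops out and one only retains $\tfrac{d}{dt}\Var_{\fm}(g_t)\le 0$, matching the weak form stated in the proposition. The $L^2$-ergodicity is an immediate consequence: $\|g_t-\bar{f}\|_{L^2}^2=\Var_{\fm}(g_t)\to 0$ as $t\to\infty$ when $\sS_F<\infty$.

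The main technical points to be careful about are (i) the justification of \emph{mass conservation} under the linearized, non-symmetric flow, for which one uses $1\in H_0^1(M)$ and a standard cutoff together with Remark~\ref{rm:unity}; and (ii) differentiating $\|g_t\|_{L^2}^2$ in $t$ rigorously, which rests on the regularity $\del_t g_t\in H_0^1(M)$ from Proposition~\ref{pr:lin}(ii). Neither is a genuine obstacle once those prerequisites are in place; the real content is the Finsler-specific comparison $\cE\le \sS_F\,\cE^{V_t}$, which explains why $\sS_F<\infty$ enters the rate.
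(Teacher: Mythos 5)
Your proof is correct and follows essentially the same route as the paper's: differentiate the variance along the linearized flow to get $\frac{d}{dt}\Var_{\fm}(g_t)=-4\cE^{V_t}(g_t)$, compare the bi-linearized energy to the genuine Finsler energy via $\sS_F$ (Lemma~\ref{lm:us}), and close the loop with the Poincar\'e inequality of Lemma~\ref{lm:Poin}, with mass conservation~\eqref{eq:consv} keeping the mean fixed. The only cosmetic difference is that the paper bounds the derivative directly by $\Var_{\fm}(f_t)$ in one inequality chain, whereas you first bound $\Var_{\fm}(g_t)$ above by $\tfrac{2\sS_F}{K}\cE^{V_t}(g_t)$ and then substitute — these are the same inequality read in opposite directions.
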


\begin{proof}
Put $f_t:=P^{\Grad u}_{s,t}(f)$, then $\int_M f_t \,d\fm=\int_M f \,d\fm$
holds by \eqref{eq:consv}.
It follows from Lemmas~\ref{lm:us}, \ref{lm:Poin} that
\begin{align*}
\frac{d}{dt}\big[\! \Var_{\fm}(f_t) \big]
&=-2 \int_M df_t(\nabla^{V_t} f_t) \,d\fm
 =-2 \int_M g^*_{\cL(V_t)}(df_t,df_t) \,d\fm \\
&\le -\frac{2}{\sS_F} \int_M F^*(df_t)^2 \,d\fm
 \le -\frac{2K}{\sS_F} \Var_{\fm}(f_t).
\end{align*}
Hence $\e^{2Kt/\sS_F} \Var_{\fm}(f_t)$ is non-increasing in $t$,
this completes the proof of the first assertion.
Then the second assertion is straightforward since
\[ \Var_{\fm}(f_t)=\int_M \bigg( f_t -\int_M f \,d\fm \bigg)^2 d\fm
 \to 0 \quad (t \to \infty). \]
$\qedd$
\end{proof}

\subsection{Key estimate}\label{ssc:key}

We next prove a key estimate which would have further applications (see \cite{BL}).
Define
\begin{align*}
\varphi(c) &:= \frac{1}{\sqrt{2\pi}} \int_{-\infty}^c \e^{-b^2/2} \,db,
 \quad c \in \R, \\
\scN(\theta) &:= \varphi' \big( \varphi^{-1}(\theta) \big)
 =\frac{\e^{-\varphi^{-1}(\theta)^2/2}}{\sqrt{2\pi}},
 \quad \theta \in (0,1).
\end{align*}
We set also $\scN(0)=\scN(1):=0$.
Observe that $\scN'=-\varphi^{-1}$ and $\scN''=-1/\scN$ on $(0,1)$.

\begin{theorem}\label{th:key}
Assume that $(M,F,\fm)$ is complete and satisfies
$\Ric_{\infty} \ge K$ for some $K \in \R$, $\sC_F<\infty$, $\sS_F<\infty$ and $\fm(M)<\infty$.
Then, given a global solution $(u_t)_{t \ge 0}$ to the heat equation
with $u_0 \in \cC^{\infty}_c(M)$, $0 \le u_0 \le 1$ and satisfying \eqref{eq:hypo},
we have
\begin{equation}\label{eq:key}
\sqrt{\scN^2(u_t) +\alpha F^2(\Grad u_t)}
 \le P^{\Grad u}_{0,t} \Big( \sqrt{\scN^2(u_0) +c_{\alpha}(t) F^2(\Grad u_0)} \Big)
\end{equation}
on $M$ for all $\alpha \ge 0$ and $t>0$, where
\[ c_{\alpha}(t):=\frac{1-\e^{-2Kt}}{K} +\alpha \e^{-2Kt} >0 \]
and $c_{\alpha}(t):=2t+\alpha$ when $K=0$.
\end{theorem}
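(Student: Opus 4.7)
The plan is to mimic the adjoint-semigroup argument from the proof of Theorem~\ref{th:L1}, with a more elaborate interpolating quantity that mixes $\scN^2(u_\sigma)$ with $F^2(\Grad u_\sigma)$. Fix $\ve > 0$ and a nonnegative test function $\phi \in \cC_c^\infty(M)$, and set
\[ \Psi_\sigma := \sqrt{\scN^2(u_\sigma) + \beta(\sigma) F^2(\Grad u_\sigma) + \ve}, \qquad \sigma \in [0,t], \]
where $\beta(\sigma) := c_\alpha(t-\sigma)$. A short computation using $c_\alpha'(s) = 2 - 2K c_\alpha(s)$ shows that $\beta$ is the unique solution of $\beta'(\sigma) = 2K\beta(\sigma) - 2$ with terminal value $\beta(t) = \alpha$, so in particular $\beta(0) = c_\alpha(t)$. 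Setting
\[ \Phi(\sigma) := \int_M \widehat{P}^{\Grad u}_{\sigma,t}(\phi) \cdot \Psi_\sigma \,d\fm, \]
we have $\Phi(0) = \int_M \phi \cdot P^{\Grad u}_{0,t}(\Psi_0) \,d\fm$ and $\Phi(t) = \int_M \phi \cdot \Psi_t \,d\fm$ by \eqref{eq:adj}. Thus \eqref{eq:key} will follow from the monotonicity $\Phi(t) \le \Phi(0)$, letting $\ve \downarrow 0$ and varying $\phi$.

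The heart of the argument is the weak subsolution inequality
\[ \partial_\sigma \Psi_\sigma \le \Delta\!^{\Grad u_\sigma} \Psi_\sigma, \]
which, together with the symmetry of $\Delta\!^{\Grad u_\sigma}$ and the nonnegativity of $\widehat{P}^{\Grad u}_{\sigma,t}(\phi)$, yields $\Phi'(\sigma) \le 0$ exactly as in the proof of Theorem~\ref{th:L1}. To derive it, I would compute $\partial_\sigma[\Psi_\sigma^2]$ via the heat equation for $u_\sigma$ and the identity \eqref{eq:delF}, and $\Delta\!^{\Grad u_\sigma}[\Psi_\sigma^2]$ via the chain rule together with the key identity $\scN \scN'' \equiv -1$ and the improved Bochner inequality \eqref{eq:Boc+}. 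The ODE $\beta' = 2K\beta - 2$ is precisely designed to cancel the $F^2(\Grad u_\sigma)$ coefficients that otherwise survive, leaving
\[ \Delta\!^{\Grad u_\sigma}[\Psi_\sigma^2] - \partial_\sigma [\Psi_\sigma^2] \,\ge\, 2\bigl(\scN'(u_\sigma)\bigr)^2 F^2(\Grad u_\sigma) + 2\beta(\sigma)\, D[F(\Grad u_\sigma)]\bigl(\nabla^{\Grad u_\sigma}[F(\Grad u_\sigma)]\bigr). \]
Passing back to $\Psi_\sigma$ through the weighted Riemannian identity $\Delta\!^V \sqrt h = \Delta\!^V h / (2\sqrt h) - g_V(\nabla^V h, \nabla^V h)/(4 h^{3/2})$ and applying Cauchy--Schwarz to $g_{\Grad u_\sigma}(\Grad u_\sigma, \nabla^{\Grad u_\sigma}[F(\Grad u_\sigma)])$, the subsolution property reduces to
\[ \bigl( \scN(u_\sigma) \sqrt{Z_\sigma} - |\scN'(u_\sigma)|\, F^2(\Grad u_\sigma) \bigr)^2 \ge 0, \]
where $Z_\sigma := D[F(\Grad u_\sigma)](\nabla^{\Grad u_\sigma}[F(\Grad u_\sigma)])$, which is automatic (and uses $\scN \ge 0$ together with $\beta(\sigma) \ge 0$).

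For rigor, these manipulations must be performed in the integrated sense using Corollary~\ref{cr:Boc+}, with the regularity $u_\sigma \in H_0^1(M) \cap H_{\loc}^2(M) \cap \cC^1(M)$ and $\Lap u_\sigma \in H_0^1(M)$ supplied by Theorem~\ref{th:hf} under $\sS_F < \infty$, and with analogous properties for the adjoint semigroup supplied by Proposition~\ref{pr:lin}. The $\ve$-regularization keeps $\Psi_\sigma$ bounded below by $\sqrt \ve$ so that composition with $\sqrt{\cdot}$ preserves $H_{\loc}^1$-regularity, while the fact that $D[F(\Grad u_\sigma)] = 0$ almost everywhere on $M \setminus M_{u_\sigma}$ (see \cite[Lemma~3.5]{OSbw}) handles the vanishing locus of the gradient. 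A spatial cut-off $\psi_k \uparrow 1$ as in the proof of Theorem~\ref{th:L1} converts the pointwise subsolution property into the global monotonicity of $\Phi$, after which dominated convergence carries out the limit $\ve \downarrow 0$.

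The main obstacle will be the careful management of three non-smooth features: (i) the singular behavior of $\scN$ at $\{u_\sigma \in \{0,1\}\}$, where $\scN''$ blows up but only the combination $\scN \scN'' \equiv -1$ remains bounded, so all chain-rule computations must be organized through $\scN^2$ rather than $\scN$ itself; (ii) the locus $M \setminus M_{u_\sigma}$, where the reference vector field extending $\Grad u_\sigma$ must be chosen arbitrarily and the gradient-of-$F(\Grad u_\sigma)$ terms must be interpreted in the integrated sense; and (iii) the weak integration by parts against the non-compactly supported $\widehat{P}^{\Grad u}_{\sigma,t}(\phi)$, which is resolved by the cut-off argument combined with energy estimates on $\|\Lap u_\sigma\|_{L^2}$ analogous to those appearing in the proof of Theorem~\ref{th:L1}. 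The essential cancellation, however, is purely algebraic and driven by the ODE for $\beta$; once it is in place, Cauchy--Schwarz closes the estimate.
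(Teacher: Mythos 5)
Your proposal is correct and follows essentially the same route as the paper's proof: the same interpolating quantity $\zeta_s = \sqrt{\scN^2(u_s) + c_\alpha(t-s)F^2(\Grad u_s)}$, the same adjoint-semigroup reduction to a parabolic subsolution inequality, the same use of $\scN\scN'' \equiv -1$, the improved Bochner inequality, and a final Cauchy--Schwarz yielding a perfect square. The only cosmetic difference is that you insert an $\ve$-regularization under the square root, whereas the paper instead handles the degenerate locus by observing that $\Delta\!^{\Grad u_s}\zeta_s - \del_s\zeta_s$ vanishes a.e.\ on $u_s^{-1}(0)\cup u_s^{-1}(1)$ and works directly on the complement where $\scN(u_s)>0$ keeps $\zeta_s$ bounded away from zero.
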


For simplicity, we suppressed the dependence of $c_{\alpha}$ on $K$.

\begin{proof}
By replacing $u_0$ with $(1-2\ve)u_0 +\ve$,
we can assume $\ve \le u_0 \le 1-\ve$ for some $\ve>0$,
and then we have $\ve \le u_t \le 1-\ve$ for all $t>0$ (recall \eqref{eq:u>=0}).
Fix $t>0$ and put
\[ \zeta_s:=\sqrt{\scN^2(u_s) +c_{\alpha}(t-s) F^2(\Grad u_s)},
 \qquad 0 \le s \le t \]
(compare this function with $\xi_{\sigma}$ in the proof of Theorem~\ref{th:L1}).
Then \eqref{eq:key} is written as $\zeta_t \le P^{\Grad u}_{0,t}(\zeta_0)$
and it suffices to show $\del_s[P^{\Grad u}_{s,t}(\zeta_s)] \le 0$ in the weak sense.
Observe from \eqref{eq:adj} and \eqref{eq:Phat} that,
for any nonnegative $\phi \in \cC_c^{\infty}((0,t) \times M)$,
\begin{align}
\int_0^t \int_M \del_s \phi_s \cdot P^{\Grad u}_{s,t}(\zeta_s) \,d\fm \,ds
&= \int_0^t \int_M \widehat{P}^{\Grad u}_{s,t}(\del_s \phi_s) \cdot \zeta_s \,d\fm \,ds \nonumber\\
&= \int_0^t \int_M \big\{ \del_s[\widehat{P}^{\Grad u}_{s,t}(\phi_s)]
 +\Delta\!^{V_s}[\widehat{P}^{\Grad u}_{s,t}(\phi_s)] \big\} \cdot \zeta_s \,d\fm \,ds \nonumber\\
&= \int_0^t \int_M \widehat{P}^{\Grad u}_{s,t}(\phi_s) \cdot
 (\Delta\!^{\Grad u_s} \zeta_s -\del_s \zeta_s) \,d\fm \,ds, \label{eq:key-}
\end{align}
where in the second equality we deduce from the linearity of $ \widehat{P}^{\Grad u}_{s,t}$ that
\begin{align*}
&\int_M \del_s[\widehat{P}^{\Grad u}_{s,t}(\phi_s)] \cdot \zeta_s \,d\fm \\
&= \lim_{\ve \to 0} \frac{1}{\ve} \int_M
 \big\{ \widehat{P}^{\Grad u}_{s+\ve,t}(\phi_{s+\ve}) -\widehat{P}^{\Grad u}_{s,t}(\phi_{s+\ve}) \big\}
 \cdot \zeta_s \,d\fm
 +\int_M \widehat{P}^{\Grad u}_{s,t}(\del_s \phi_s) \cdot \zeta_s \,d\fm \\
&= \lim_{\ve \to 0} \frac{1}{\ve} \int_s^{s+\ve} \int_M
 d\zeta_s \big( \nabla^{\Grad u_{s+r}}[ \widehat{P}^{\Grad u}_{s+r,t}(\phi_{s+\ve})] \big) \,d\fm \,dr
 +\int_M \widehat{P}^{\Grad u}_{s,t}(\del_s \phi_s) \cdot \zeta_s \,d\fm \\
&= \int_M d\zeta_s \big( \nabla^{\Grad u_s}[ \widehat{P}^{\Grad u}_{s,t}(\phi_s)] \big) \,d\fm \,dr
 +\int_M \widehat{P}^{\Grad u}_{s,t}(\del_s \phi_s) \cdot \zeta_s \,d\fm
\end{align*}
for almost every $s$.
We shall show that the RHS of \eqref{eq:key-} is nonnegative.

We first calculate by using \eqref{eq:delF} and $c'_{\alpha}=2(1-Kc_{\alpha})$ as
\[ \del_s \zeta_s =
 \frac{1}{\zeta_s} \Big\{ \scN(u_s)\scN'(u_s) \Lap u_s +\big( Kc_{\alpha}(t-s)-1 \big) F^2(\Grad u_s)
 +c_{\alpha}(t-s) d(\Lap u_s)(\Grad u_s) \Big\}. \]
Next, we have
\[ \nabla^{\Grad u_s} \zeta_s
 = \frac{1}{\zeta_s} \bigg\{ \scN(u_s) \scN'(u_s) \Grad u_s
 +\frac{c_{\alpha}(t-s)}{2} \nabla^{\Grad u_s}[F^2(\Grad u_s)] \bigg\}. \]
Hence
\begin{align*}
\Delta\!^{\Grad u_s} \zeta_s
&= \frac{\scN(u_s) \scN'(u_s)}{\zeta_s} \Lap u_s
 +\frac{\scN'(u_s)^2 -1}{\zeta_s} F^2(\Grad u_s)
 -\frac{\scN(u_s) \scN'(u_s)}{\zeta_s^2} d\zeta_s(\Grad u_s) \\
&\quad +\frac{c_{\alpha}(t-s)}{2\zeta_s} \Delta\!^{\Grad u_s}[F^2(\Grad u_s)]
 -\frac{c_{\alpha}(t-s)}{2\zeta_s^2} d\zeta_s \big( \nabla^{\Grad u_s}[F^2(\Grad u_s)] \big),
\end{align*}
where we used $\scN''=-1/\scN$ and $\Delta\!^{\Grad u_s}[F^2(\Grad u_s)]$
is understood in the weak sense.

Now we apply the improved Bochner inequality (Corollary~\ref{cr:Boc+}) to obtain
\begin{align*}
\Delta\!^{\Grad u_s} \zeta_s -\del_s \zeta_s
&= \frac{\scN'(u_s)^2 -Kc_{\alpha}(t-s)}{\zeta_s} F^2(\Grad u_s) \\
&\quad +\frac{c_{\alpha}(t-s)}{\zeta_s}
 \bigg\{ \Delta\!^{\Grad u_s}\bigg[ \frac{F^2(\Grad u_s)}{2} \bigg] -d(\Lap u_s)(\Grad u_s) \bigg\} \\
&\quad -\frac{\scN(u_s) \scN'(u_s)}{\zeta_s^2} d\zeta_s(\Grad u_s)
 -\frac{c_{\alpha}(t-s)}{2\zeta_s^2} d\zeta_s \big( \nabla^{\Grad u_s}[F^2(\Grad u_s)] \big) \\
&\ge \frac{\scN'(u_s)^2}{\zeta_s} F^2(\Grad u_s)
 +\frac{c_{\alpha}(t-s)}{\zeta_s F^2(\Grad u_s)} d\bigg[ \frac{F^2(\Grad u_s)}{2} \bigg]
 \bigg( \nabla^{\Grad u_s}\bigg[ \frac{F^2(\Grad u_s)}{2} \bigg] \bigg) \\
&\quad -\frac{\scN(u_s) \scN'(u_s)}{\zeta_s^2} d\zeta_s(\Grad u_s)
 -\frac{c_{\alpha}(t-s)}{2\zeta_s^2} d\zeta_s \big( \nabla^{\Grad u_s}[F^2(\Grad u_s)] \big)
\end{align*}
in the weak sense.
Substituting
\[ d\zeta_s=\frac{1}{\zeta_s}\bigg\{ \scN(u_s) \scN'(u_s) du_s
 +\frac{c_{\alpha}(t-s)}{2} d[F^2(\Grad u_s)] \bigg\} \]
and recalling \eqref{eq:f1f2}, we obtain
\begin{align*}
\Delta\!^{\Grad u_s} \zeta_s -\del_s \zeta_s
&\ge \frac{\zeta_s^2 \scN'(u_s)^2 -\scN^2(u_s) \scN'(u_s)^2}{\zeta_s^3} F^2(\Grad u_s) \\
&\quad -\frac{c_{\alpha}(t-s) \scN(u_s) \scN'(u_s)}{\zeta_s^3}
 du_s \big( \nabla^{\Grad u_s}[F^2(\Grad u_s)] \big) \\
&\quad +\frac{c_{\alpha}(t-s)}{\zeta_s^3} \bigg\{ \frac{\zeta_s^2}{F^2(\Grad u_s)} -c_{\alpha}(t-s) \bigg\}
 d\bigg[ \frac{F^2(\Grad u_s)}{2} \bigg]
 \bigg( \nabla^{\Grad u_s}\bigg[ \frac{F^2(\Grad u_s)}{2} \bigg] \bigg) \\
&= \frac{c_{\alpha}(t-s) \scN'(u_s)^2}{\zeta_s^3} F^4(\Grad u_s) \\
&\quad -\frac{c_{\alpha}(t-s) \scN(u_s) \scN'(u_s)}{\zeta_s^3}
 du_s \big( \nabla^{\Grad u_s}[F^2(\Grad u_s)] \big) \\
&\quad +\frac{c_{\alpha}(t-s)}{\zeta_s^3} \frac{\scN^2(u_s)}{F^2(\Grad u_s)}
 d\bigg[ \frac{F^2(\Grad u_s)}{2} \bigg]
 \bigg( \nabla^{\Grad u_s}\bigg[ \frac{F^2(\Grad u_s)}{2} \bigg] \bigg).
\end{align*}
Since the Cauchy--Schwarz inequality for $g_{\Grad u_s}$ yields
\[ \big| du_s \big( \nabla^{\Grad u_s}[F^2(\Grad u_s)] \big) \big|
 \le F(\Grad u_s) \sqrt{d[F^2(\Grad u_s)] \big( \nabla^{\Grad u_s} [F^2(\Grad u_s)] \big)}, \]
we conclude that
\begin{align*}
&\Delta\!^{\Grad u_s} \zeta_s -\del_s \zeta_s \\
&\ge \frac{c_{\alpha}(t-s) \scN'(u_s)^2}{\zeta_s^3} F^4(\Grad u_s) \\
&\quad -\frac{c_{\alpha}(t-s) \scN(u_s) |\scN'(u_s)|}{\zeta_s^3}
 F(\Grad u_s) \sqrt{d[F^2(\Grad u_s)] \big( \nabla^{\Grad u_s} [F^2(\Grad u_s)] \big)} \\
&\quad +\frac{c_{\alpha}(t-s)}{\zeta_s^3} \frac{\scN^2(u_s)}{F^2(\Grad u_s)}
 d\bigg[ \frac{F^2(\Grad u_s)}{2} \bigg]
 \bigg( \nabla^{\Grad u_s}\bigg[ \frac{F^2(\Grad u_s)}{2} \bigg] \bigg) \\
&= \frac{c_{\alpha}(t-s)}{\zeta_s^3}
 \bigg( |\scN'(u_s)| F^2(\Grad u_s) -\frac{\scN(u_s)}{2F(\Grad u_s)}
 \sqrt{d[F^2(\Grad u_s)] \big( \nabla^{\Grad u_s} [F^2(\Grad u_s)] \big)} \bigg)^2 \\
& \ge 0
\end{align*}
in the weak sense.
Notice that, similarly to the proof of Theorem~\ref{th:L1},
we can take test functions from $H^1_0(M) \cap L^{\infty}(M)$ by virtue of \eqref{eq:hypo}.
Therefore the RHS of \eqref{eq:key-} is nonnegative and this completes the proof.
\if0
In order to show that
\[  \int_M \widehat{P}^{\Grad u}_{s,t}(\phi_s) \cdot
 (\del_s \zeta_s -\Delta\!^{\Grad u_s} \zeta_s) \,d\fm \]
from \eqref{eq:key-} is nonpositive,
let us take $\phi \in \cC^{\infty}_c(M)$ and a sequence of cut-off functions
$\psi_k \in \cC^{\infty}_c(M)$ as in the proof of Theorem~\ref{th:L1}.
Then we observe
\[ \int_M \psi_k \widehat{P}^{\Grad u}_{s,t}(\phi) \cdot
 (\del_s \zeta_s -\Delta\!^{\Grad u_s} \zeta_s) \,d\fm \le 0 \]
from the above weak estimate.
In the first term $\int_M \psi_k \widehat{P}^{\Grad u}_{s,t}(\phi) \cdot \del_s \zeta_s \,d\fm$,
one can take the limit as $k \to \infty$.
As for the second term, we have
\begin{align*}
&\int_M d[\psi_k \widehat{P}^{\Grad u}_{s,t}(\phi)] (\nabla^{\Grad u_s} \zeta_s) \,d\fm \\
&= \int_M \big\{ \psi_k \cdot d[\widehat{P}^{\Grad u}_{s,t}(\phi)](\nabla^{\Grad u_s} \zeta_s)
 +\widehat{P}^{\Grad u}_{s,t}(\phi) \cdot d\psi_k(\nabla^{\Grad u_s} \zeta_s) \big\} \,d\fm,
\end{align*}
and hence it is sufficient to prove $d\zeta_s(\nabla^{\Grad u_s} \zeta_s) \in L^1(M)$
for taking the limit as $k \to \infty$ (in the same way as Theorem~\ref{th:L1}).
We deduce from \eqref{eq:zeta} that
\begin{align*}
&d\zeta_s(\nabla^{\Grad u_s} \zeta_s) \\
&= \frac{1}{\zeta_s^2} \bigg\{ \scN^2(u_s) \scN'(u_s)^2 F^2(\Grad u_s)
 +\scN(u_s) \scN'(u_s) c_{\alpha}(t-s) du_s \big( \nabla^{\Grad u_s}[F^2(\Grad u_s)] \big) \\
&\qquad +\frac{c_{\alpha}^2(t-s)}{4}
 d[F^2(\Grad u_s)] \big( \nabla^{\Grad u_s}[F^2(\Grad u_s)] \big) \bigg\} \\
&\le \frac{F^2(\Grad u_s)}{\zeta_s^2}
 \Big\{ \scN^2(u_s) \scN'(u_s)^2
 +2\scN(u_s) \scN'(u_s) c_{\alpha}(t-s) d[F(\Grad u_s)] \big( \nabla^{\Grad u_s}[F(\Grad u_s)] \big) \\
&\qquad +c_{\alpha}^2(t-s) d[F(\Grad u_s)] \big( \nabla^{\Grad u_s}[F(\Grad u_s)] \big) \Big\},
\end{align*}
which belongs to $L^1(M)$ by the hypothesis \eqref{eq:hypo}.
\fi
$\qedd$
\end{proof}

When $K>0$, choosing $\alpha=K^{-1}$ and letting $t \to \infty$ in \eqref{eq:key}
yields the following.

\begin{corollary}\label{cr:key}
Assume that $(M,F,\fm)$ is complete and satisfies
$\Ric_{\infty} \ge K>0$, $\sC_F<\infty$, $\sS_F<\infty$ and $\fm(M)=1$.
Then, for any $u \in \cC_c^{\infty}(M)$ with $0 \le u \le 1$ and satisfying \eqref{eq:hypo},
we have
\begin{equation}\label{eq:key'}
\sqrt{K} \scN\bigg( \int_M u \,d\fm \bigg)
 \le \int_M \sqrt{K\scN^2(u) +F^2(\Grad u)} \,d\fm.
\end{equation}
\end{corollary}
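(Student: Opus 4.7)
The plan is to apply Theorem~\ref{th:key} with the distinguished choice $\alpha = K^{-1}$, which collapses the time-dependent coefficient $c_\alpha(t)$ into a constant:
\begin{equation*}
c_{1/K}(t) = \frac{1-\e^{-2Kt}}{K} + \frac{\e^{-2Kt}}{K} = \frac{1}{K}.
\end{equation*}
Taking the initial datum $u_0 = u$ and multiplying both sides of~\eqref{eq:key} by $\sqrt{K}$, Theorem~\ref{th:key} yields the pointwise bound
\begin{equation*}
\sqrt{K\scN^2(u_t) + F^2(\Grad u_t)} \le P^{\Grad u}_{0,t}\Bigl( \sqrt{K\scN^2(u) + F^2(\Grad u)} \Bigr)
\end{equation*}
for every $t > 0$, where $(u_t)_{t \ge 0}$ is the global solution with $u_0 = u$.

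Next, I would integrate against $\fm$ and invoke the mass conservation identity~\eqref{eq:consv}. This is legal because $\sS_F < \infty$ forces $\Lambda_F < \infty$ by Lemma~\ref{lm:rev}, so forward and backward completeness coincide; together with $\fm(M) = 1$ this places the constant function $1$ in $H^1_0(M)$, which is what~\eqref{eq:consv} requires. Consequently,
\begin{equation*}
\int_M \sqrt{K\scN^2(u_t) + F^2(\Grad u_t)} \,d\fm \;\le\; \int_M \sqrt{K\scN^2(u) + F^2(\Grad u)} \,d\fm,
\end{equation*}
with the right-hand side independent of $t$.

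The final step is to pass $t \to \infty$ on the left. By Proposition~\ref{pr:ergo} (whose hypotheses are precisely $\sS_F < \infty$, $\Ric_\infty \ge K > 0$ and $\fm(M) = 1$, all of which hold), we have $u_t \to \bar{u} := \int_M u \,d\fm$ in $L^2(M)$, hence along a subsequence $u_t \to \bar{u}$ almost everywhere. Since $\scN$ is continuous on $[0,1]$ and vanishes at the endpoints, $\scN(u_t) \to \scN(\bar{u})$ almost everywhere. Dropping the nonnegative $F^2(\Grad u_t)$ term gives the trivial estimate $\sqrt{K\scN^2(u_t) + F^2(\Grad u_t)} \ge \sqrt{K}\,\scN(u_t)$, so Fatou's lemma produces
\begin{equation*}
\sqrt{K}\,\scN\!\bigl(\bar{u}\bigr) \;\le\; \liminf_{t \to \infty} \int_M \sqrt{K\scN^2(u_t) + F^2(\Grad u_t)} \,d\fm,
\end{equation*}
and combining this with the previous display yields~\eqref{eq:key'}.

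The only genuinely delicate point is the limit step, and the reason it goes through so easily is that we only require a \emph{lower} bound on the $\liminf$ of the left-hand side: dropping $F^2(\Grad u_t)$ immediately gives the right direction of inequality, so no control on the gradient at infinity is needed and Fatou suffices. The rest is a direct packaging of Theorem~\ref{th:key}, mass conservation~\eqref{eq:consv}, and the ergodicity of Proposition~\ref{pr:ergo}.
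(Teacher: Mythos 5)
Your proof is correct and reaches the same conclusion, but the limit step is handled differently from the paper. The paper applies the ergodicity of Proposition~\ref{pr:ergo} to \emph{both} sides of the pointwise inequality $\sqrt{K}\,\scN(u_t) \le P^{\Grad u}_{0,t}\big(\sqrt{K\scN^2(u)+F^2(\Grad u)}\big)$: it asserts that $u_t \to \int_M u\,d\fm$ and, separately, that $P^{\Grad u}_{0,t}\big(\sqrt{K\scN^2(u)+F^2(\Grad u)}\big)$ converges in $L^2$ to the constant $\int_M \sqrt{K\scN^2(u)+F^2(\Grad u)}\,d\fm$, and then passes to the limit in the pointwise bound. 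You instead integrate the pointwise bound against $\fm$ first, use only the mass-conservation identity~\eqref{eq:consv} to collapse the right-hand side to a $t$-independent constant, and then drop the gradient term and invoke Fatou on the left along a subsequence where $u_t \to \int_M u\,d\fm$ a.e. This buys you something: the paper's route requires invoking the $L^2$ ergodicity for the function $\sqrt{K\scN^2(u)+F^2(\Grad u)}$, whereas you only need the instantaneous (for each fixed $t$) conservation law for that function plus Fatou, which is a weaker input. The trade-off is the need to extract an a.e.-convergent subsequence from the $L^2$ convergence of $u_t$, but that is standard. Both routes hinge on the same choice $\alpha = K^{-1}$, the same Theorem~\ref{th:key}, and the same ergodicity package; yours is a mild but genuine streamlining of the limit argument.
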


\begin{proof}
Let $(u_t)_{t \ge 0}$ be the global solution to the heat equation with $u_0=u$.
Taking $\alpha=K^{-1}$, we find $c_{\alpha} \equiv K^{-1}$ and hence by \eqref{eq:key}
\[ \sqrt{K \scN^2(u_t)} \le \sqrt{K \scN^2(u_t) +F^2(\Grad u_t)}
 \le P^{\Grad u}_{0,t}\Big( \sqrt{K\scN^2(u)+F^2(\Grad u)} \Big). \]
Letting $t \to \infty$, we deduce from the ergodicity (Proposition~\ref{pr:ergo}) that
\begin{align*}
u_t &\to \int_M u \,d\fm,\\
P^{\Grad u}_{0,t}\Big( \sqrt{K\scN^2(u)+F^2(\Grad u)} \Big)
 &\to \int_M \sqrt{K\scN^2(u)+F^2(\Grad u)} \,d\fm
\end{align*}
in $L^2(M)$.
Thereby we obtain \eqref{eq:key'}.
$\qedd$
\end{proof}

\subsection{Proof of Theorem~\ref{th:BL}}

\begin{proof}
Let $\theta \in (0,1)$.
Fix a closed set $A \subset M$ with $\fm(A)=\theta$ and consider
\[ u^{\ve}(x):=\max\{1-\ve^{-1}d(x,A),0\}, \qquad \ve>0. \]
Note that $F(\Grad u^{\ve})=\ve^{-1}$ on $B^-(A,\ve) \setminus A$, where
$B^-(A,\ve):=\{ x \in M \,|\, \inf_{y \in A} d(x,y)<\ve \}$
is the backward $\ve$-neighborhood of $A$.
Applying \eqref{eq:key'} to (smooth approximations of) $u^{\ve}$ and letting $\ve \downarrow 0$ implies,
with the help of $\scN(0)=\scN(1)=0$,
\[ \sqrt{K} \scN(\theta)
 \le \liminf_{\ve \downarrow 0} \frac{\fm(B^-(A,\ve))-\fm(A)}{\ve}. \]
This is the desired isoperimetric inequality for the reverse Finsler structure $\rev{F}$
(recall Definition~\ref{df:rev}) since, with $c:=\varphi^{-1}(\theta)/\sqrt{K}$,
\[ \sqrt{K} \scN(\theta) =\sqrt{\frac{K}{2\pi}} \e^{-Kc^2/2}, \qquad
 \theta =\varphi(\sqrt{K} c)
 =\sqrt{\frac{K}{2\pi}} \int_{-\infty}^c \e^{-Ka^2/2} \,da. \]
Because the curvature bound $\Ric_{\infty} \ge K$ is common to $F$ and $\rev{F}$,
we also obtain \eqref{eq:BLisop}.
$\qedd$
\end{proof}

\if0

The same argument as \cite[Corollary~7.5]{Oneedle} gives
the following corollary concerning normed spaces.
Even this simple case seems new.

\begin{corollary}[Isoperimetric inequality on normed spaces]\label{cr:norm}
Let $n \ge 2$ and $\|\cdot\|:\R^n \lra [0,\infty)$ be a continuous function satisfying$:$
\begin{enumerate}[{\rm (1)}]
\item $\|x\| >0$ for all $x \in \R^n \setminus \{(0,0,\ldots,0)\};$
\item $\|cx\|=c\|x\|$ for all $x \in \R^n$ and $c>0;$
\item $\|x+y\| \le \|x\|+\|y\|$ for any $x,y \in \R^n$.
\end{enumerate}
Consider the distance function $d(x,y):=\|y-x\|$ of $\R^n$,
and take a probability measure $d\fm=\e^{-\Phi}dx^1 dx^2 \cdots dx^n$ on $\R^n$
such that $dx^1 dx^2 \cdots dx^n$ is the Lebesgue measure
and $\Phi$ is a continuous function.

If $\Phi$ is $K$-convex with $K>0$ in the sense that
\[ \Phi\big( (1-\lambda)x+\lambda y \big) \le
 (1-\lambda) \Phi(x) +\lambda \Phi(y) -\frac{K}{2}(1-\lambda) \lambda d^2(x,y) \]
for all $x,y \in \R^n$ and $\lambda \in (0,1)$, then we have
\[ \cI_{(\R^n,d,\fm)}(\theta) \ge \cI_K(\theta) \quad \text{for all} \,\ \theta \in (0,1). \]
\end{corollary}

We remark that the completeness is clear in this case,
and $\sS_F<\infty$ is enjoyed for smooth approximations of the norm $\|\cdot\|$.

\fi

\renewcommand{\refname}{{\large References}}
{\small

}

\end{document}